\newcommand{\bsub}{\begin{subequations}}
\newcommand{\esub}{\end{subequations}$\!$}
\newcommand{\ds}[0]{\displaystyle}
\newcommand{\fr}{\frac}
\newcommand{\eps}{\varepsilon}
\newcommand{\emdash}{\hspace{1pt}---\hspace{1pt}}
\newcommand{\bex}{\begin{example}\rm}
\newcommand{\eex}{\end{example}}
\def\be{\begin{equation}}
\def\ee{\end{equation}}
\def\ba{\begin{align}}
\def\ea{\end{align}}
\newtheorem{thm}{Theorem}[section]
\newtheorem{conj}[thm]{Conjecture}
\newtheorem{lem}[thm]{Lemma}
\newcounter{coronumber}[thm]
\numberwithin{coronumber}{thm} 
\newtheorem{coro}[coronumber]{Corollary}
\newtheorem{example}{Example}
\newtheorem{pr}[thm]{Principle Result}
\newcommand{\R}{\mathbb{R}}
\newcommand{\til}[1]{\tilde{#1}}
\newcommand{\calig}[1]{\mathscr{#1}}
\newcommand{\ud}{\ \mathrm{d}}
\newcommand{\pd}[2]{\frac{\partial #1}{\partial #2}}
\newcommand{\D}[2]{\frac{\mathrm{d} #1}{\mathrm{d} #2}}
\newcommand{\bigoh}{\mathcal{O}}
\newcommand{\littleoh}{ \mbox{{\scriptsize $\mathcal{O}$}}}
\newcommand{\ce}{\colonequals}
\title{Analysis of the singular solution branch of a prescribed mean curvature equation with singular nonlinearity modeling a MEMS capacitor}
\author{Nicholas D. Brubaker\thanks{Department of Mathematical Sciences, University of Delaware, Newark, Delaware, 19716, USA. ({\tt brubaker@math.udel.edu}). The work of this author was supported by the National Science Foundation through a Graduate Research Fellowship.} \and Alan E. Lindsay\thanks{Department of Mathematics, University of Arizona, Tucson, Arizona, 85721, USA. ({\tt alindsay@math.arizona.edu}).}}
\begin{document}

\label{firstpage}
\maketitle

\begin{abstract}
 The existence and multiplicity of solutions to a quasilinear, elliptic partial differential equation (PDE) with singular non-linearity is analyzed. The PDE is a recently derived variant of a canonical model used in the modeling of Micro-Electro Mechanical Systems (MEMS). It is observed that the bifurcation curve of solutions terminates at single \emph{dead-end} point, beyond which no classical solutions exist. A necessary condition for the existence of solutions is developed which reveals that this dead-end point corresponds to a blow-up in the solution derivative at a point internal to the domain. By employing a novel asymptotic analysis in terms of a pair of small parameters, an accurate prediction of this dead-end point is obtained. An arc-length parameterization of the solution curve can be employed to continue solutions beyond the dead-end point, however, all extra solutions are found to be multivalued. This analysis therefore suggests the dead-end is a bifurcation point associated with the onset of multivalued solutions for the system.
 \end{abstract}
 
 \begin{keywords} 
prescribed mean curvature, disappearing solutions, singular perturbation, MEMS, singular nonlinearity
\end{keywords}

\begin{AMS}
35J93, 35P30, 34B15, 34C23, 74K15
\end{AMS}

\pagestyle{myheadings}
\thispagestyle{plain}
\markboth{N.\:D. BRUBAKER AND A.\:E. LINDSAY}{ANALYSIS OF PRESCRIBED MEAN CURVATURE EQUATION}

\section{Introduction}

A micro-electromechanical systems (MEMS) capacitor consists of two surfaces held opposite of one another. The lower surface is a rigid inelastic ground plate while the upper surface is a thin elastic membrane held fixed along its boundary and free to deflect in the presence of a potential difference $V$ (c.f. Fig.~\ref{fig:2dsetup}). When $V$ is small enough, a stable equilibrium deflection is attained by the deflecting membrane; however, if $V$ exceeds a critical value $V^{\ast}$, called the \emph{pull-in} voltage, an equilibrium  deflection is no longer attainable and the upper surface will \emph{touchdown} on the lower. This loss of a stable equilibrium is called the \emph{pull-in instability} and the mathematical modeling of its onset has been the focus of many recent studies (c.f. \cite{esposito2010memsbook,pelesko2003modeling} and the references therein for a thorough account).

\begin{figure}[h]
\centering
\includegraphics[width=0.75\textwidth]{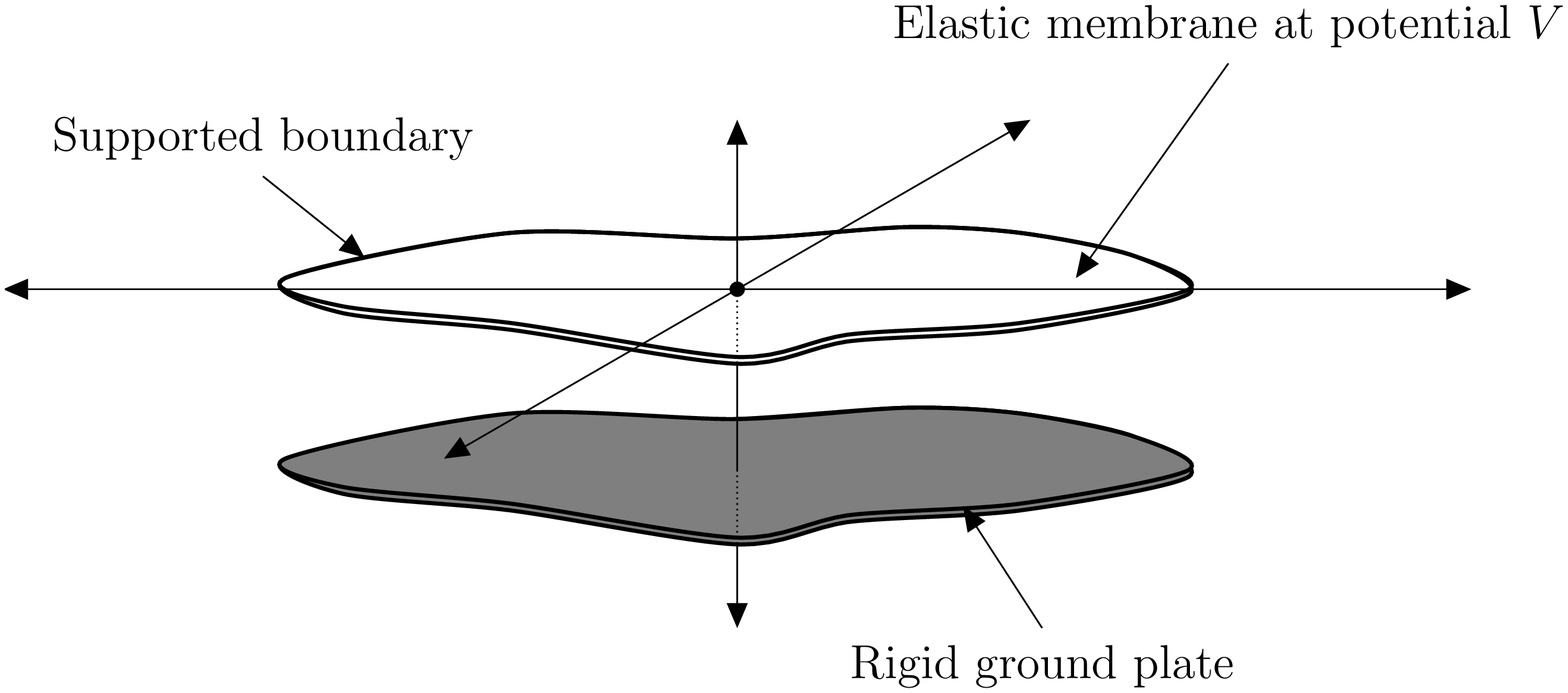}
\parbox{4.5in}{\caption{Schematic diagram of a MEMS capacitor.}\label{fig:2dsetup}}
\end{figure}

In a recent study of the physical approximations made in models of a MEMS capacitor (c.f. \cite{brubaker2011nonlinear}), the following quasilinear, elliptic partial differential equation (PDE) for the dimensionless equilibrium deflection was derived:
\be\label{eq:govpde}
  \mbox{div } \frac{\nabla u }{\sqrt{1 + \eps^2 |\nabla u|^2}} = \frac{\lambda}{(1+u)^2}, \quad x\in \Omega; \qquad  u=0,\quad x\in \partial\Omega.
\ee
Here, $0<\eps\ll 1$ is the aspect ratio of the device, $\lambda\propto V^2$ is a nonnegative dimensionless parameter quantifying the relative strengths of the elastic and electrostatic forces in the system and $\Omega$ is a bounded region in $\mathbb{R}^n$. The physically relevant dimensions{\emdash}$n=1,2${\emdash}are the focus of the present work.

In typical applications, the aspect ratio $\eps$ is a small quantity and so many MEMS researchers simplify \eqref{eq:govpde} by linearizing the scaled mean curvature operator on the left-hand side, thus yielding the equation 
\be\label{eq:stdpde}
  \Delta u = \frac{\lambda}{(1+u)^2}, \quad x\in \Omega; \qquad  u=0,\quad x\in \partial\Omega.
\ee
This reduced equation has been extensively studied and many of its properties are well known (c.f. \cite{esposito2010memsbook,wardguopan,pelesko2003modeling} and the references therein). One of the canonical properties is the existence of a critical value, $\lambda^*$, such that  for each $\lambda <  \lambda^*$, \eqref{eq:stdpde} admits a unique stable solution. At the end of this branch of stable solutions there is a saddle node bifurcation, and accordingly no solutions of \eqref{eq:stdpde} exist for  $\lambda > \lambda^*$. In the case $n=1$, \eqref{eq:stdpde} has exactly two solutions for each $\lambda < \lambda^*$ and a unique regular solution at $\lambda = \lambda^*$.  In the case $n=2$, the unstable solution branch undergoes infinitely many additional saddle node bifurcations which leads to higher multiplicity in the solution set (see the bifurcation diagrams given in Figure \ref{fig:stdbds}, where $\Omega$ is taken to be $[-1,1]$ and the two-dimensional unit disk for \subref{fig:stdbds1d} and \subref{fig:stdbds2d}, respectively).

\begin{figure}[h]
\centering
\subfigure[$n=1,\  \Omega=\{x \in \R : |x|\leq1\}$]{\label{fig:stdbds1d}\includegraphics[width=0.425\textwidth]{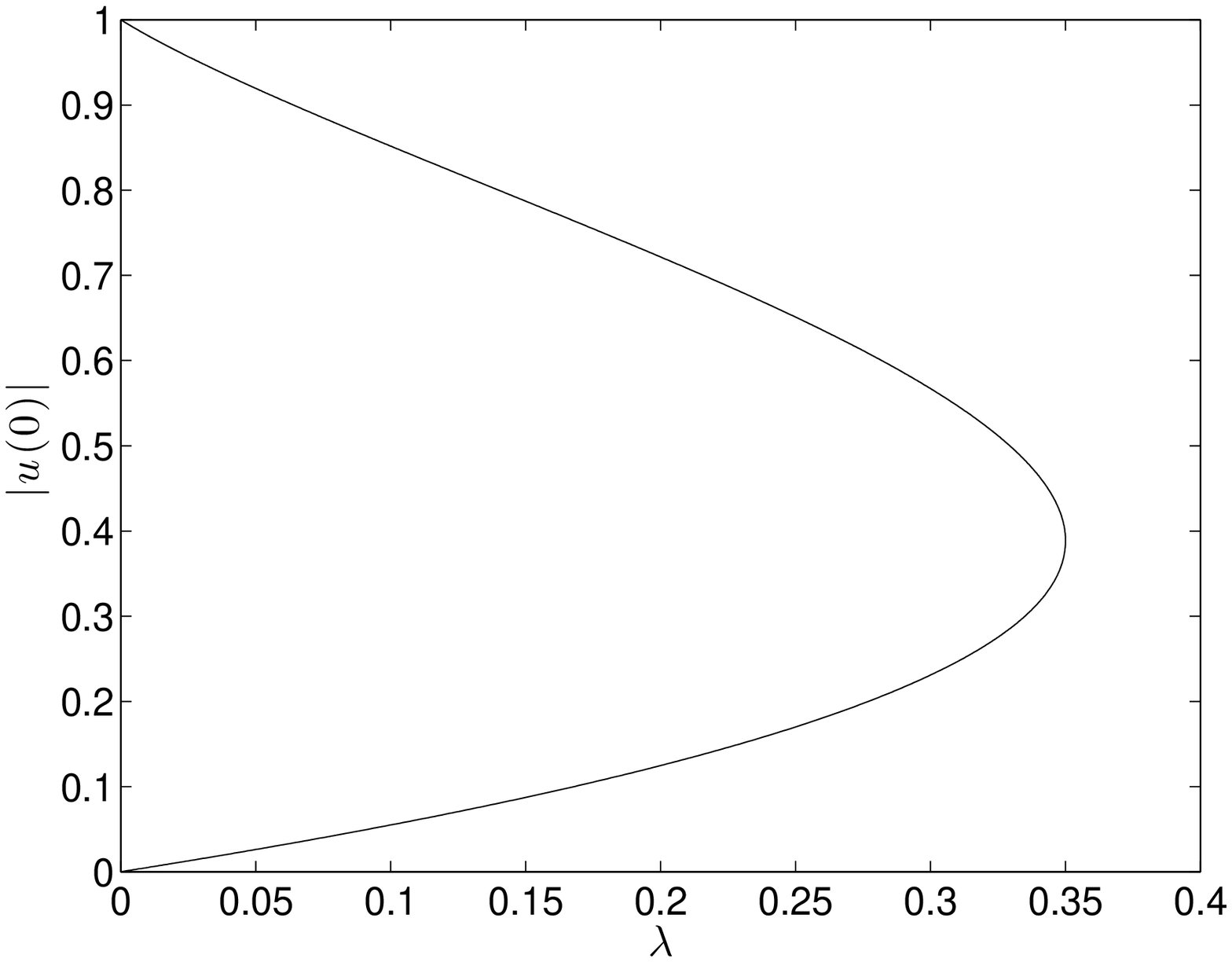}}\qquad
\subfigure[$n=2,\  \Omega= \{x \in \R^2 : |x|\leq1\}$]{\label{fig:stdbds2d}\includegraphics[width=0.425\textwidth]{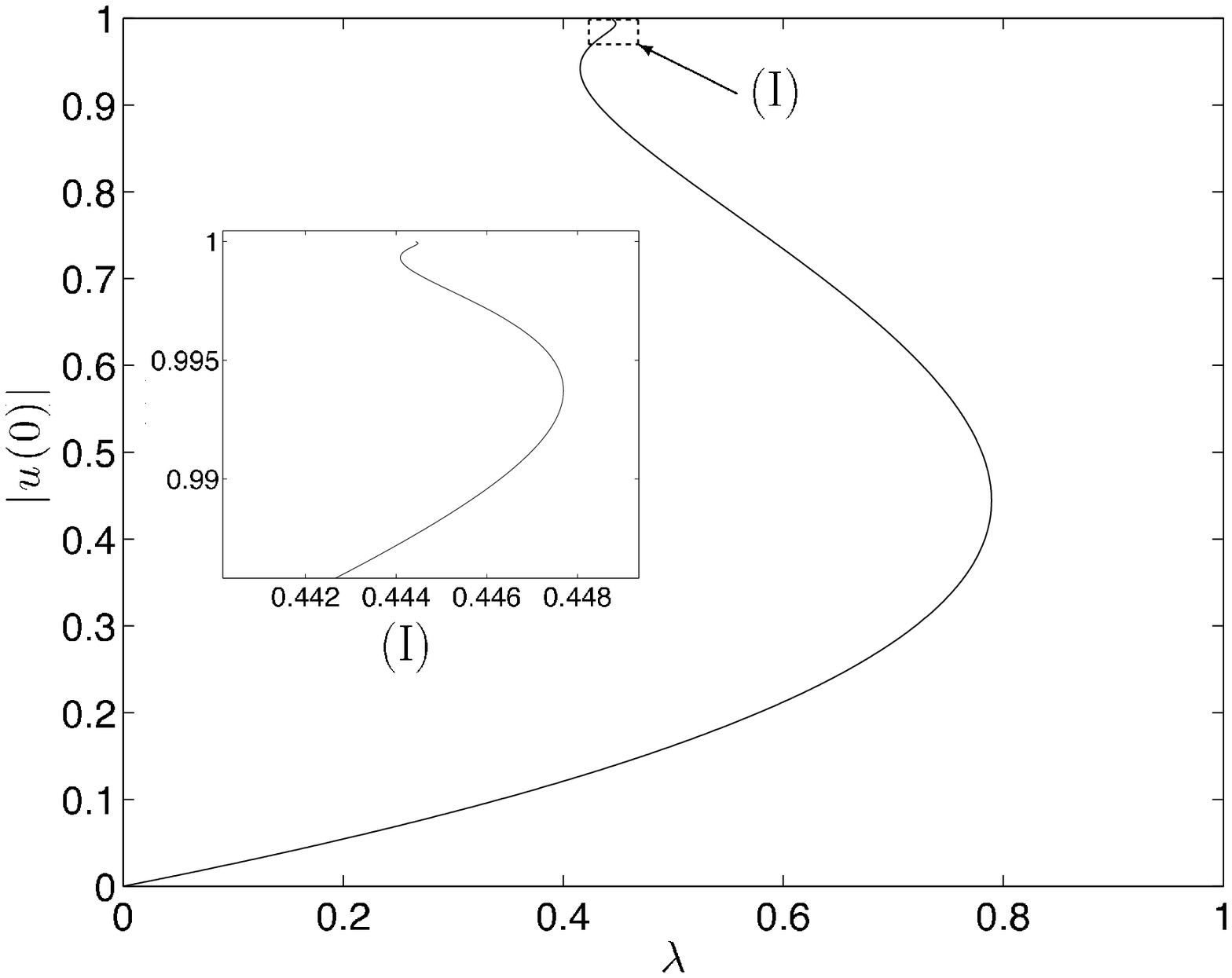}}
\caption{ Bifurcation diagrams of \eqref{eq:stdpde}. In \subref{fig:stdbds1d}, exactly two solutions are present for each $\lambda<\lambda^{\ast}$. In \subref{fig:stdbds2d}, the bifurcation curve undergoes an infinite number of fold point and hence has a infinite number of solutions for ranges of $\lambda$.\label{fig:stdbds}}
\end{figure}

However, when $\eps \neq 0$ the solution set $(\lambda,u)$ of \eqref{eq:govpde} can be markedly different from that of \eqref{eq:stdpde}. The particular focus of this paper is to catalog and analyze some of the profound differences between the solution structure of \eqref{eq:govpde} and \eqref{eq:stdpde} in the singular limit $\|u\|_{\infty}\to 1$. Let us begin by remarking on some differences in the $n=1$ and $n=2$ cases observed from previous studies (c.f. \cite{brubaker2011analysis,brubaker2011nonlinear}).

When $n=1$, it was shown in \cite{brubaker2011analysis} that there exists a critical $\eps^* \approx 0.35$ for which the bifurcation curve of \eqref{eq:govpde} deviates from the qualitative shape given in Fig.~\ref{fig:stdbds1d}. In particular, if $\eps > \eps^*$, then there exists two values $\lambda_{*}(\eps)$ and $\lambda_{**}(\eps)$ such that whenever $\lambda\in (\lambda_{*} ,\lambda_{**})$, the stable minimal solution branch is the only solution of \eqref{eq:stdpde} (see Figure \ref{fig:mcbds1d}). 
\begin{figure}[h]
\centering
\subfigure[$\eps \leq \eps^*$]{\label{fig:mcbds1dleq}\includegraphics[width=0.425\textwidth]{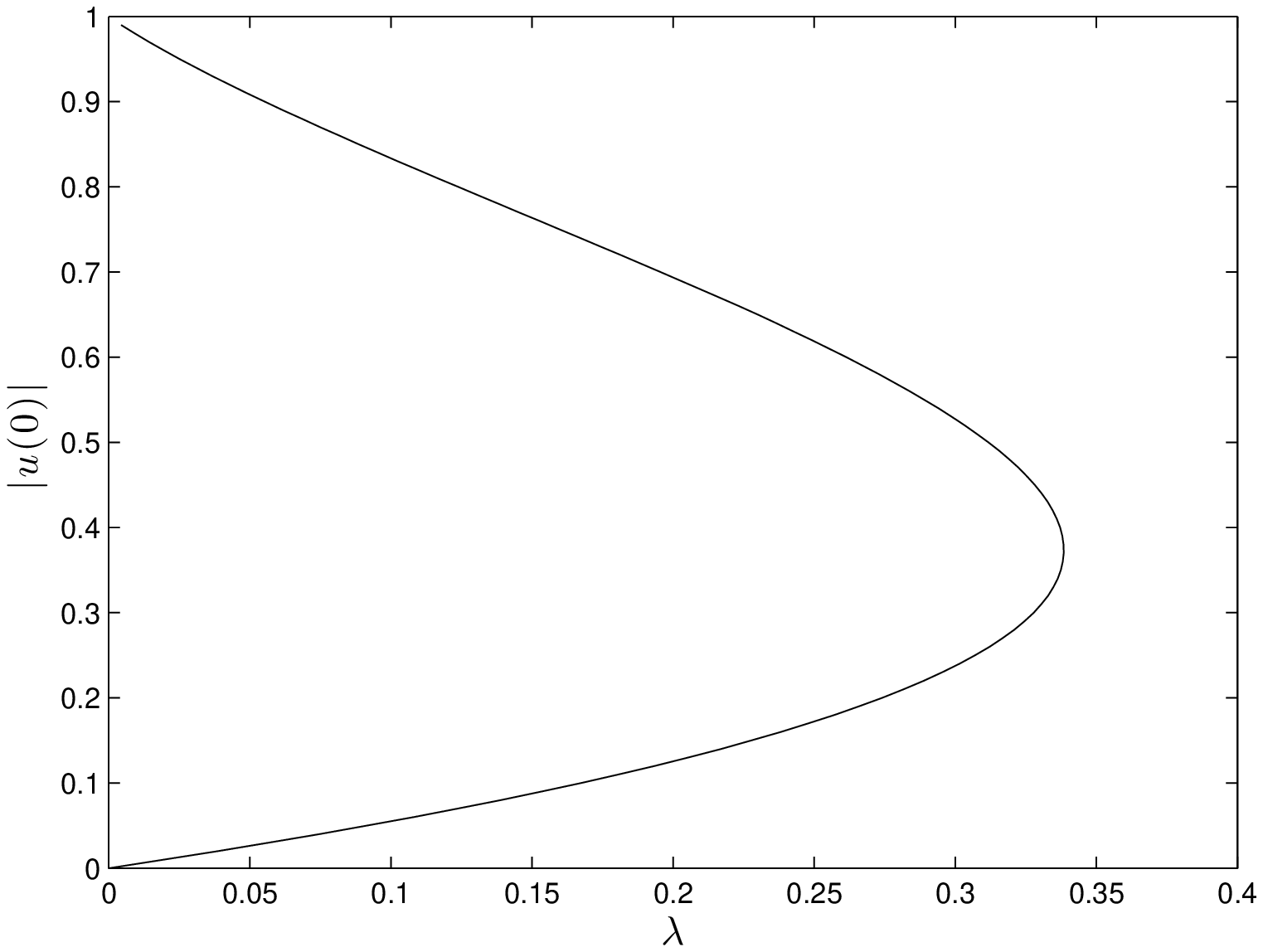}}\qquad
\subfigure[$\eps > \eps^*$]{\label{fig:mcbds1dg}\includegraphics[width=0.425\textwidth]{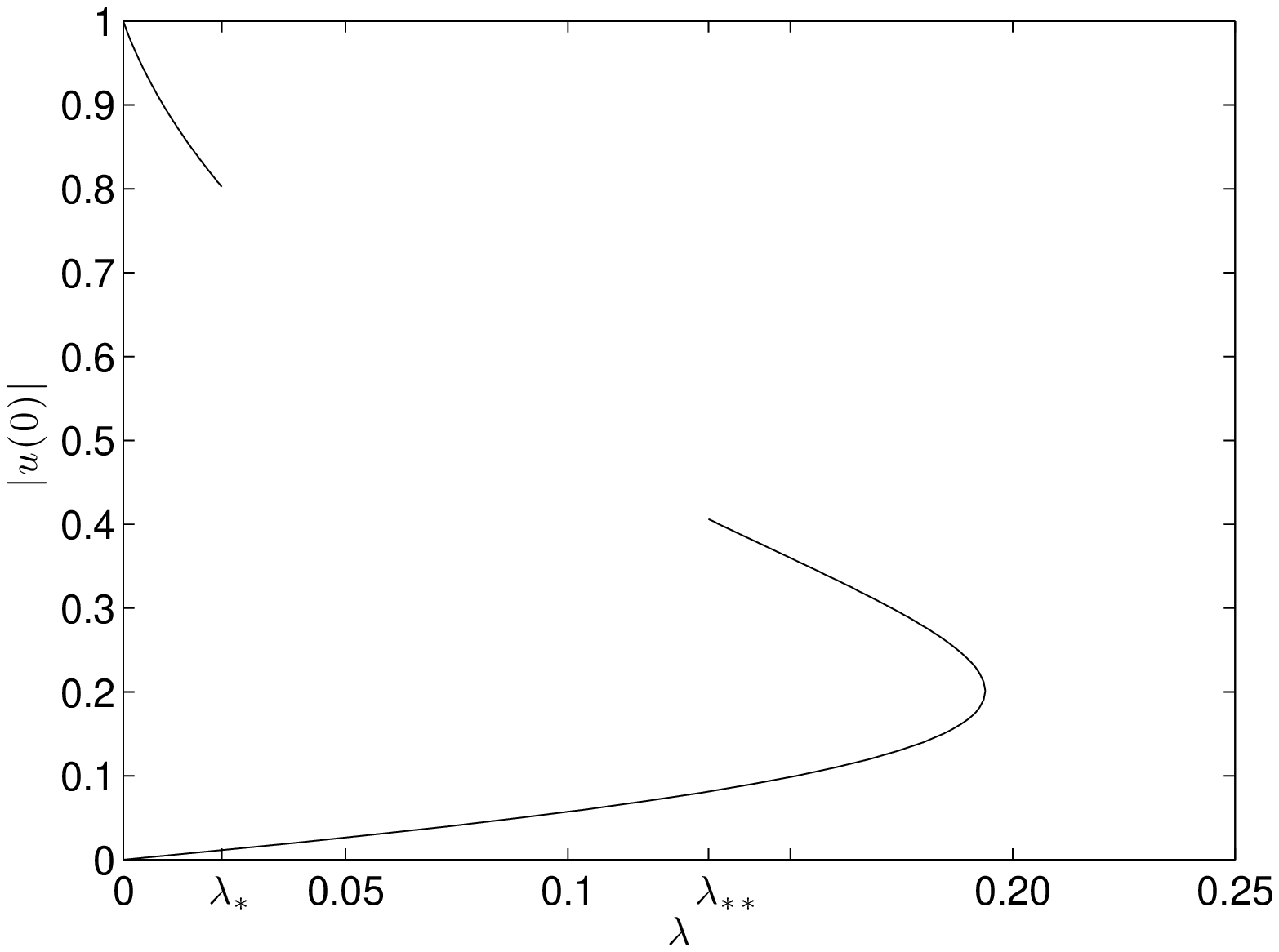}}
\caption{ Bifurcation diagrams of \eqref{eq:govpde} for $n=1$ and $\Omega=\{x \in \R : |x|\leq1\}$ and various $\eps$: \subref{fig:mcbds1dleq} for $\eps=1/2$; \subref{fig:mcbds1dg} for $\eps = 10/3$.\label{fig:mcbds1d}}
\end{figure}
In the case where $\Omega \subset \R^2$ is the unit disk and $u$ is radially symmetric,  
\eqref{eq:govpde} reduces to the ordinary differential equation
\be\label{eq:gov2dradpde}
\begin{aligned}
  \frac{1}{r}\left( \frac{r u' }{\sqrt{1 + \eps^2 (u')^2}}\right)' = \frac{\lambda}{(1+u)^2},   \quad 0<r<1; \quad   u'(0) =u(1)=0,
  \end{aligned}
\ee
where $u(r)\in (-1,0]$ for $0<r<1$. Numerical simulations have indicated the presence of a similar \lq\lq disappearance of solutions\rq\rq~phenomenon, more specifically the bifurcation curve{\emdash}$(\lambda(\eps), |u(0)|)$, where $|u(0)| \in [0,1)${\emdash }undergoes a finite number of folds before terminating at a single \emph{dead-end} point, denoted $(\lambda_*(\eps),|\alpha_*(\eps)|)$ (see Figures \ref{fig:mcbds2d1}--\subref{fig:mcbds2dzoom}). Numerics show that as the bifurcation curve  approaches $(\lambda_*,|\alpha_*|)$, the derivative of the solution becomes unbounded at some internal point, suggesting that multivalued solutions may be continued beyond the dead-end point. 

Therefore to further study \eqref{eq:gov2dradpde}, a parametrization $(r,u(r)) = (r(s),z(s))$ was employed (c.f. \cite{brubaker2011nonlinear}) in terms of an arc length $s$ along the solution curve. The functions $r(s), z(s)$ satisfy the system of ordinary differential equations:
\be\label{eq:arclengthsystem}
 \begin{alignedat}{1}
  &r''  = -\frac{\eps^2 \lambda z'}{(1+z)^2} + \frac{\eps^2 (z')^2}{r} ,  \quad z'' = \frac{\lambda r'}{(1+z)^2}-\frac{r'z'}{r},   \qquad 0<s<\ell, \\
  & r(0) = 0, \quad r'(0) =1, \quad r(\ell) = 1, \quad z'(0) = 0, \quad z(\ell) = 0,
 \end{alignedat}
\ee
where $r>0$ and $z>-1$ for $0<s<\ell$. Note that, by the implicit function theorem, solutions $(r(s),z(s))$ of \eqref{eq:arclengthsystem} can be written as $z=u(r)$, which are solutions of \eqref{eq:gov2dradpde}, if and only if $r'(s) \neq 0$ for all $s\in (0,\ell)$. Numerical simulation of \eqref{eq:arclengthsystem} demonstrates that this arc-length parameterization allows for the bifurcation curve to be continued beyond $(\lambda_*,|\alpha_*|)$ and that an infinite fold points structure, similar to that of \eqref{eq:stdpde} for $n=2$, is recovered (see Figures \ref{fig:pmcbds2d1}--\subref{fig:pmcbds2dzoom}). However, all of the additional solutions of \eqref{eq:arclengthsystem} beyond $(\lambda_*,|\alpha_*|)$ are observed to be multivalued (c.f. \cite{brubaker2011nonlinear}).

\begin{figure}[h]
\centering
\subfigure[]{\label{fig:mcbds2d1}\includegraphics[width=0.425\textwidth]{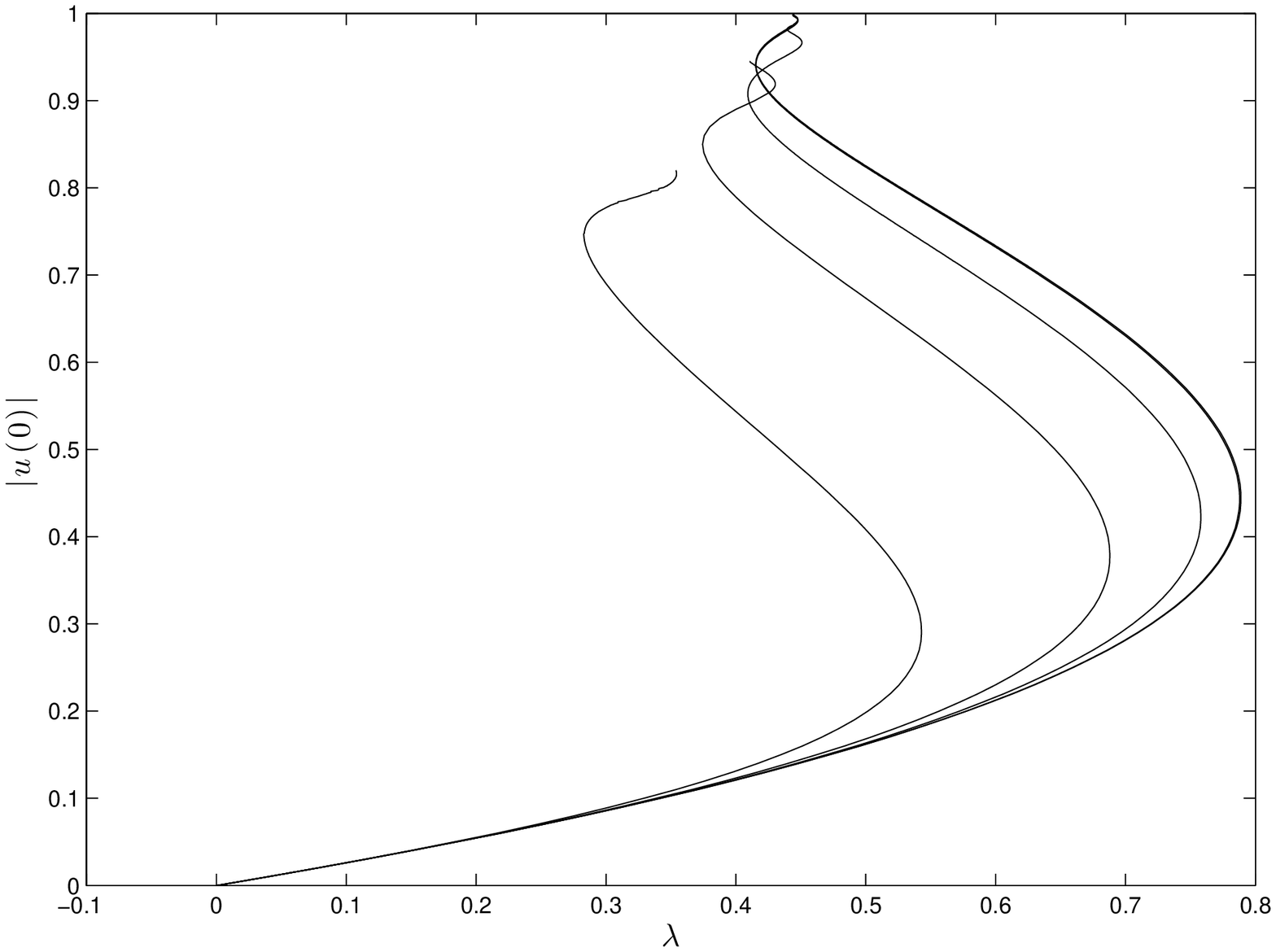}}\qquad
\subfigure[Zoomed]{\label{fig:mcbds2dzoom}\includegraphics[width=0.425\textwidth]{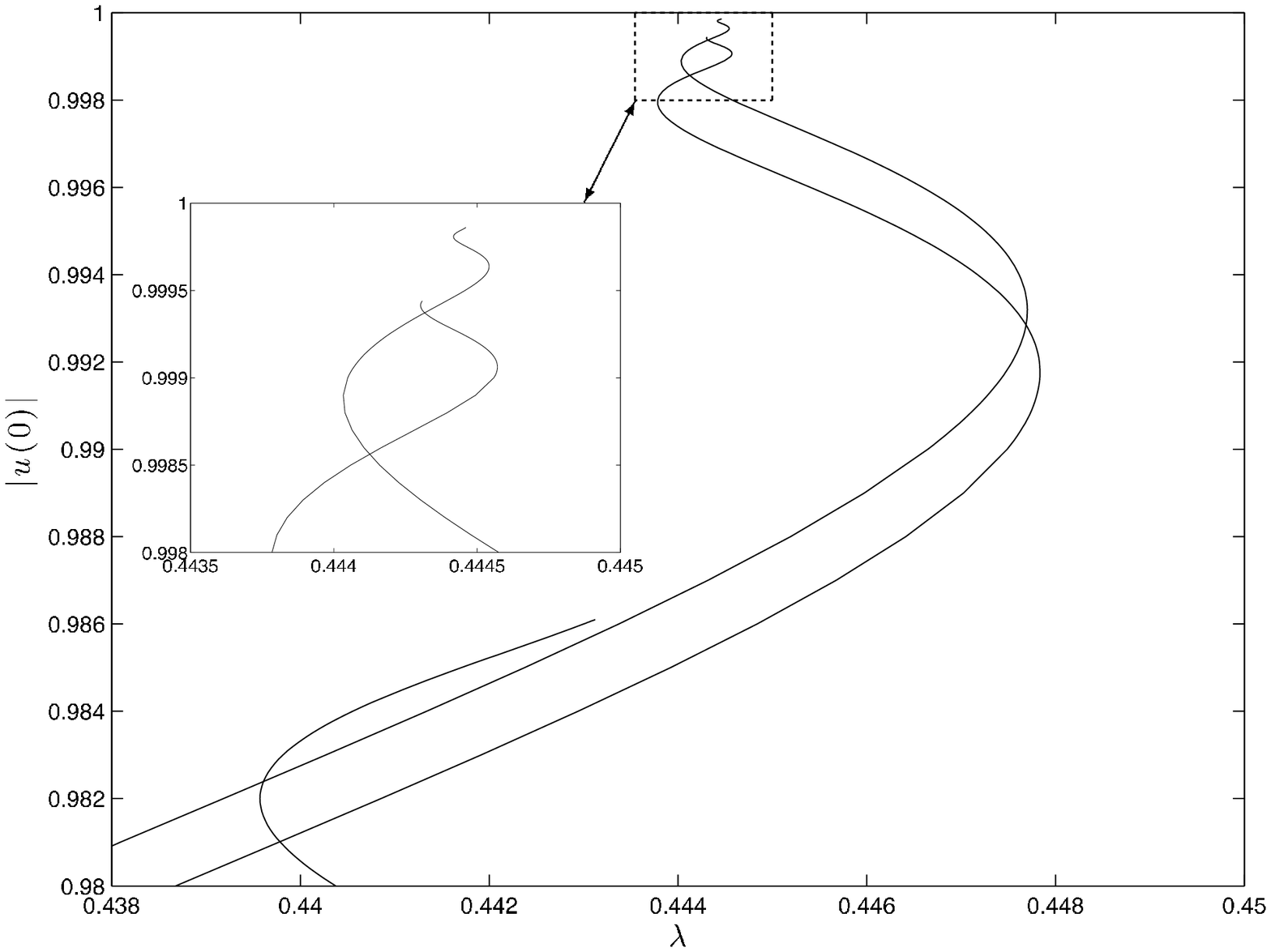}}\\
\subfigure[]{\label{fig:pmcbds2d1}\includegraphics[width=0.425\textwidth]{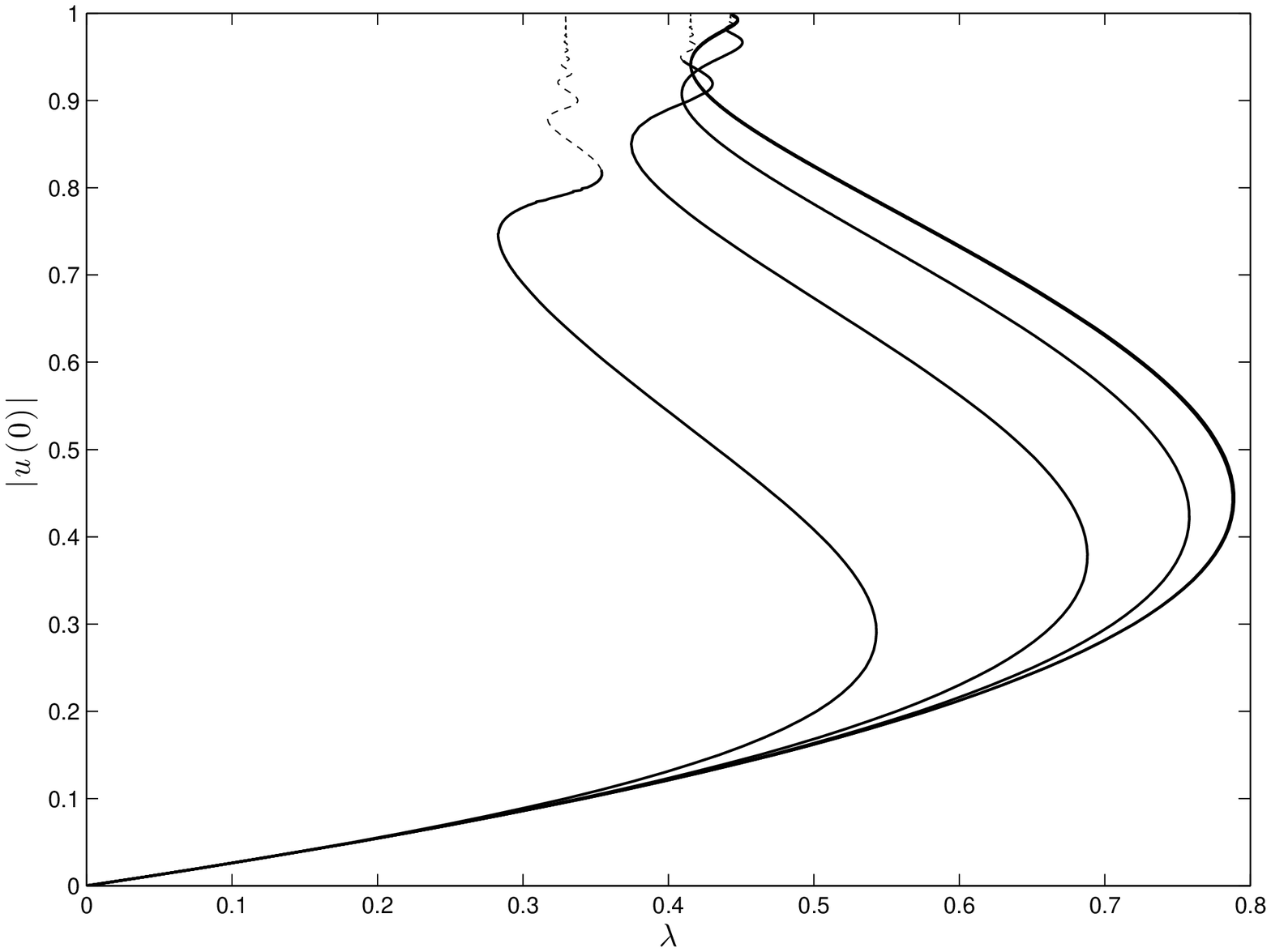}}\qquad
\subfigure[Zoomed]{\label{fig:pmcbds2dzoom}\includegraphics[width=0.425\textwidth]{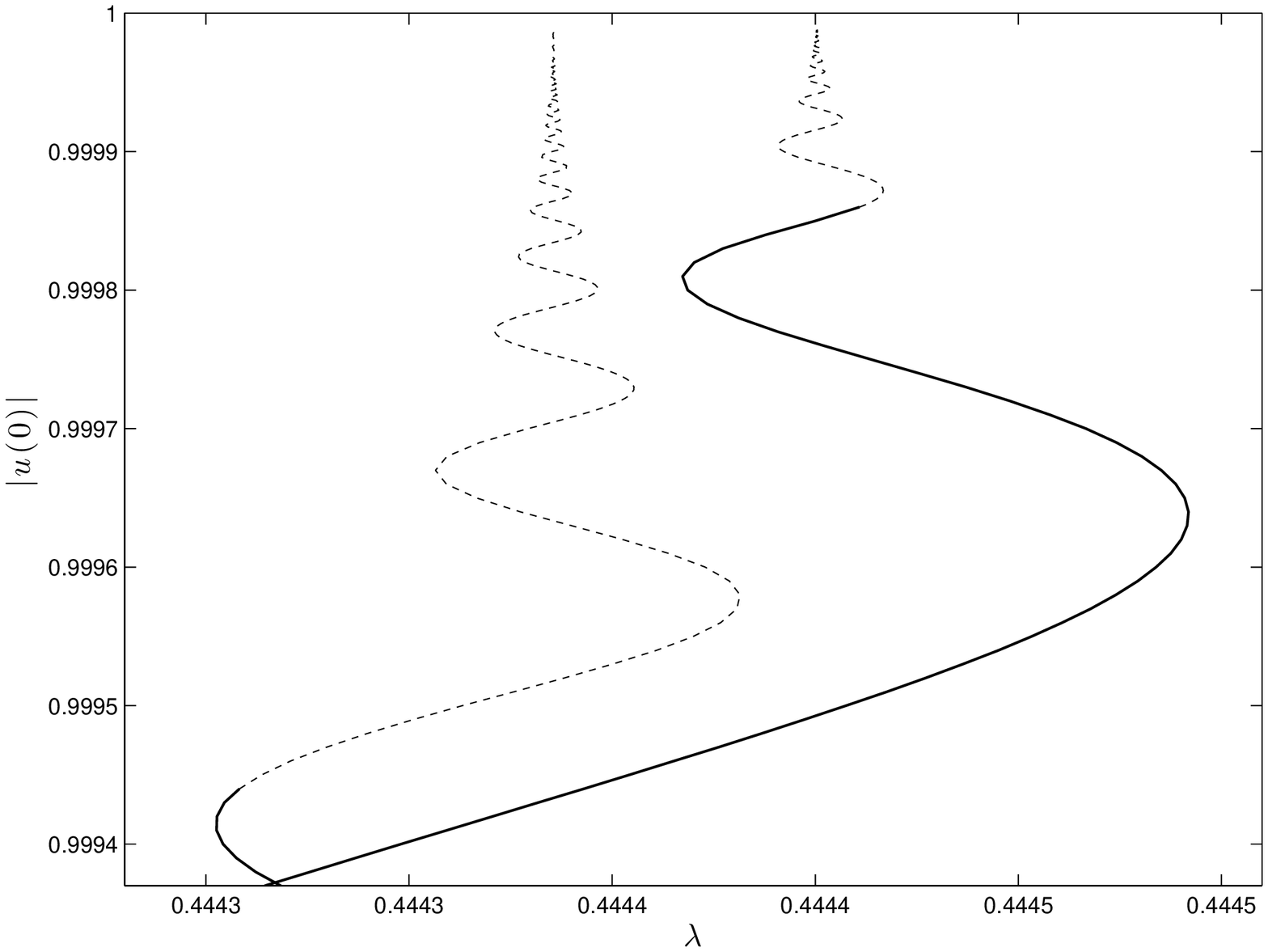}}
\caption{ \subref{fig:mcbds2d1} Bifurcation curves of \eqref{eq:gov2dradpde} computed via numerics for various $\eps$.  From right to left the  curves correspond to $\eps = 0.05,0.1, 0.5,1,2$. Note that at this scale $\eps = 0.05$ and $\eps = 0.1$ appear equal. \subref{fig:mcbds2dzoom} A magnified portion of \subref{fig:mcbds2d1}. Here, the curves for $\eps = 0.05,0.1,0.5$ are seen. \subref{fig:pmcbds2d1} Bifurcation curves of \eqref{eq:arclengthsystem} computed via numerics for various $\eps$. The dashed line represents multi-valued solutions of the system \eqref{eq:arclengthsystem}, which naturally continue on from the final classical solutions of \eqref{eq:gov2dradpde}.  From right to left the curves correspond to $\eps = 0.05, 0.1, 0.5,1,2$. \subref{fig:pmcbds2dzoom} A magnified portion of \subref{fig:pmcbds2d1}, where $\eps = 0.05, 0.1$ correspond to the right and left curves, respectively.\label{fig:mcbds2d}}
\end{figure}

The disappearance of solutions behavior is not isolated to \eqref{eq:govpde} and has arisen in other mean curvature type equation, notably \cite{finn1986equilibrium}, \cite{moulton2008theory} and \cite{pan2009one}, which studied the shape of a pendant drop, electrostatic deflections of a catenoid and a one-dimensional Gelfand-Bratu type problem, respectively. In addition, issues of existence, uniqueness and multiplicity of solutions to problems of general type
\begin{equation}\nonumber
  \mbox{div } \frac{\nabla u }{\sqrt{1 + |\nabla u|^2}} = \lambda f(u,x), \quad x\in \Omega; \qquad  u=0,\quad x\in \partial\Omega
\end{equation}
have been a topic of recent consideration by several authors (see \cite{pan2011radial,pan2011time1, pan2011time2,burns2011steady, Obersnel2010, mellet2010existence, le2008subsupersolution, habets2004positive} and the references there-in). 

The main goal of this paper is to analyze the radially symmetric upper solution branch of \eqref{eq:govpde} in the limit $\|u\|_{\infty}\to1$ for the one- and two-dimensional unit ball. 

For the case $n=1$, we analyze the upper solution branch of \eqref{eq:govpde} for $\Omega = \{x \in \R : |x|<1\}$ using the method of matched asymptotic expansions in the limit $\delta\ce 1+u(0)\to0^{+}$. In doing so, our analysis relies heavily on the use of logarithmic switchback terms (cf. \cite{lindsay2011asymptotics2,hinch1991perturbation}). Our main result for the case $n=1$ recovers the limiting form of the bifurcation diagram as $\delta:= 1+u(0)\to0^{+}$ and is encapsulated in Principal Result \ref{pr:1d_conc}. We remark that Principal Result \ref{pr:1d_conc} is established for any $\eps>0$, and is therefore in agreement with the result (c.f. \cite{brubaker2011analysis}) that \eqref{eq:govpde} admits solutions for $u(0)$ arbitrarily close to $-1$ in the case $\Omega = \{x \in \R : |x|<1\}$.

In \S\ref{section:2d}, radially symmetric solutions of \eqref{eq:govpde} on the unit disc are analyzed with particular focus on the nature of the \emph{dead-end bifurcation}. In \S\ref{subsection:nc}, a rigorous necessary condition is established on solutions of \eqref{eq:gov2dradpde}, namely that for any $\lambda<\lambda^{\ast}$ and $\eps>0$, there exists an $\alpha_{\ast}(\eps,\lambda)\in\mathbb{R}$ such that $|u(0)|<|\alpha_{\ast}|<1$. This result is proved in the Theorems leading up to Corollary \ref{coro:fp_mc} and demonstrates that unlike \eqref{eq:govpde} in the $n=1$ case and \eqref{eq:stdpde} for $n=1,2$, \eqref{eq:gov2dradpde} has no solutions for $u(0)$ arbitrarily close to $-1$. This loss of a classical solution is shown to be due to the formation of a singularity in the derivative at a point internal to the domain.

To complement the aforementioned qualitative result, we employ a novel formal asymptotic analysis to gain insight into the disappearance of solutions at the \emph{dead-end point} and establish a very accurate prediction of its location. The analysis demonstrates that the disappearance of solutions is connected in an intricate way to small values of the parameters $\eps$ and $\delta \ce 1+ u(0)$. Therefore in this case, the perturbation analysis involves two small parameters and must be performed in the distinguished limit $\eps^2 / \delta = \delta_0$ for $\delta_0 = \bigoh(1)$. Consequently, this formal approach allows for an explicit characterization of the upper solution branch in terms of two functions determined by the solution of two associated initial value problems. These two different methods of computation provide a very accurate prediction of the dead-end point, as shown in Principal Result \ref{pr:2drad_np}, by specifying a critical $\delta_0^{\ast}$ beyond which the asymptotic solution fails. In agreement with Corollary \ref{coro:fp_mc}, this implies that asymptotic solutions are not valid as $\delta\to0$. 

In \S\ref{sect:arclen2d}, the arc length parameterization system \eqref{eq:arclengthsystem} is studied and is found to be amenable to the aforementioned asymptotic analysis. An accurate representation of the solution branch of \eqref{eq:arclengthsystem} in the limit $\delta \ce 1+z(0)\to0^{+}$ is accordingly obtained in Principal Result \ref{pr:2drad_p} which is found to be valid for $\delta \to0$. These additional multi-valued solutions therefore provide a natural continuation beyond the \emph{dead-end point} associated with \eqref{eq:gov2dradpde}. Finally, in section \ref{section:conclusion}, a few open problems are discussed.

\section{One-dimensional analysis}\label{sec:onedim}

In this section we construct an asymptotic approximation to the maximal solution of \eqref{eq:govpde} in the case where $\Omega$ is the interval $[-1,1]$. As \eqref{eq:govpde} is symmetric about the origin, we may reduce the domain to $[0,1]$, impose the condition $u'(0)=0$ and solve the resulting problem
\be\label{eq:pmcoriginalsym}
 \begin{alignedat}{2}
  & \left(\frac{u'}{\sqrt{1 + \eps^2(u')^2}}\right)' =  \frac{\lambda}{(1+u)^2}, & \quad &0< x<1; \qquad  u'(0) = u(1) = 0,
 \end{alignedat}
\ee
in the limit $\delta \colonequals 1 - |u(0)| \to0^+$. From the bifurcation diagrams given in Figure \ref{fig:mcbds1d}, we see that $\lambda = \littleoh(1)$ as $\delta \to 0^+$ and, in accordance, assume  
\begin{subequations}\label{eq:origoutuandlamexp}
\begin{equation}\label{eq:lambdaexpansion}
  \lambda = \nu_1(\delta) \lambda_1 + \nu_1^2(\delta) \lambda_1 + \ldots,
\end{equation}
where $\nu_1(\delta)$ is a gauge function to be determined satisfying $\nu_1(\delta) \to 0^{+}$ as $\delta \to 0^+$. Furthermore, the following general form for the initial expansion of $u$
\begin{equation}\label{eq:uouterexpansion} 
 u  = u_0 + \nu_1(\delta)  u_1 + \nu_1^2(\delta) u_2 + \ldots,
\end{equation}
\end{subequations}
is employed. As we will see, the final asymptotic structure of the limiting solutions as $\delta\to0^+$ is not completely obvious, and indeed our initial expansion \eqref{eq:origoutuandlamexp} will require tailoring as the singularity structure of the solution manifests itself. Plugging \eqref{eq:origoutuandlamexp} into \eqref{eq:pmcoriginalsym} and collecting terms, we obtain
\begin{alignat}{2}
  & \left(\frac{u_0'}{\sqrt{1+\eps^2(u_0')^2}}\right)' = 0,  \quad 0< x<1, & \qquad u_0'(0) = u_0(1) = 0, \label{eq:outerord1}\\
  & \left(\frac{u_1'}{(1+\eps^2(u_0')^2)^{3/2}}\right)' = \frac{\lambda_1}{(1+u_0)^2},  \quad 0< x<1, & \qquad u_1'(0) = u_1(1) = 0, \label{eq:outerord2}
\end{alignat}
and
\begin{equation} \label{eq:outerord3}
\begin{alignedat}{2}
  & \left(\frac{u_2'}{(1+\eps^2(u_0')^2)^{3/2}}\right)' = \frac{\lambda_2(1+u_0)-2\lambda_1u_1}{(1+u_0)^3}+ \left(\frac{3\eps^2 u_0' (u_1')^2}{(1+\eps^2(u_0')^2)^{5/2}} \right)',  \ 0< x<1, \\
& \qquad u_2'(0) = u_2(1) = 0.
\end{alignedat}
\end{equation}
In carrying out the differentiation in \eqref{eq:outerord1}, we find that $u_0''=0$ for $x\in(0,1)$, and therefore, $u_0$ is a linear function of $x$. Therefore the solution of \eqref{eq:outerord1} that satisfies $u_0(1)=0$ and $u_0(0) = -1$ is 
\begin{equation}\label{eq:ord1outersoln}
 u_0(x) = -1+x, \qquad x\in(0,1),
\end{equation}
 which cannot satisfy  $u_0'(0)=0$. Note that the leading order outer solution on $[-1,1]$ is $u_0 = -1 + |x|$, which is not differentiable at the origin. This indicates the presence of a boundary layer in the vicinity of $x=0$. 
 
 Then using \eqref{eq:ord1outersoln} in  \eqref{eq:outerord2}--\eqref{eq:outerord3}  and solving theses equations iteratively, with respect to the right boundary condition, gives that
\begin{equation}\label{eq:ord2outersoln}
  u_1(x) = -\lambda_1(1 + \eps^2)^{3/2} \log x+B_1(1 - x)
\end{equation}
 and 
\begin{equation}\label{eq:ord3outersoln}
\begin{aligned}
 u_2(x) & =   (1 + \eps^2)^3 \lambda_1^2\frac{\log x}{x} + \frac{3\lambda_1^2(1 + \eps^2)^{2}- 2 B_1 \lambda_1(1 + \eps^2)^{3/2}  }{2x} \\
	& \qquad +  (B_1 \lambda_1 ( \eps^2-2) \sqrt{1 + \eps^2} -(1 + \eps^2)^{3/2} \lambda_2) \log{x} + C_1 \\
	& \qquad - \left(\frac{3\lambda_1^2(1 + \eps^2)^{2}- 2 B_1 \lambda_1(1 + \eps^2)^{3/2}  }{2} +C_1\right) x,
\end{aligned}
\end{equation}
where $B_1$ and $C_1$ are constants pertaining to an arbitrary solution of the homogeneous equation and are to be determined. Therefore, from \eqref{eq:ord1outersoln}--\eqref{eq:ord3outersoln}, 
\begin{equation}\label{eq:outersoln2orders}
\begin{aligned}
  u & = -1+x + \nu_1(\delta) u_1(x) + \nu_1^2(\delta) u_2  + \ldots \quad \mbox{ as } \delta \to 0^+.
 \end{aligned}
\end{equation}
By introducing the inner variable $y = x/ \gamma$, we find from \eqref{eq:outersoln2orders} that the outer solution has the behavior  $u = -1+\gamma y + \cdots \quad \mbox{as } x \to 0^+. $ Since $u = -1 + \bigoh(\delta)$ in the inner layer, we set $\gamma = \delta$ and define the following inner variables:
\begin{alignat}{2}
   y & = x/ \delta, & \qquad \qquad  u(x) & =-1 + \delta w(y).\label{eq:localvar}
\end{alignat}
Then substituting \eqref{eq:localvar}, along with \eqref{eq:lambdaexpansion}, into \eqref{eq:pmcoriginalsym} we obtain
\begin{equation}\label{eq:innerbalance}
 \left( \frac{w'}{\sqrt{1 + \eps^2(w')^2}}\right)'=  \frac{\nu_1}{\delta}\frac{(\lambda_1 + \cdots )}{ w^2},
\end{equation} 
and a dominant balance requires that $\nu_1 =\delta$. From this we have that $x = \delta y$, which implies that the local behavior of the outer expansion, \eqref{eq:outersoln2orders}, is given by  
\begin{equation}\nonumber
\begin{aligned}
  u & = -1 + \delta \log \delta \left( -\lambda_1(1 + \eps^2)^{3/2} + \frac{(1 + \eps^2)^3 \lambda_1^2}{ y}\right) \\
& \quad +  \delta \left( y-\lambda_1(1 + \eps^2)^{3/2} \log y+ B_1 + (1 + \eps^2)^3 \lambda_1^2\frac{\log y}{ y}\right. \\
& \hspace{2in}+ \left.\frac{3\lambda_1^2(1 + \eps^2)^{2}- 2 B_1 \lambda_1(1 + \eps^2)^{3/2}  }{2 y}\right) \\
& \quad + (\delta^2 \log{\delta}) (B_1 \lambda_1 ( \eps^2-2) \sqrt{1 + \eps^2} -(1 + \eps^2)^{3/2} \lambda_2) \\
& \quad + \delta^2 (-B_1 y +  (B_1 \lambda_1 ( \eps^2-2) \sqrt{1 + \eps^2} -(1 + \eps^2)^{3/2} \lambda_2) \log{y}+  C_1) +  \littleoh(\delta^2) 
 \end{aligned}
\end{equation}
as $\delta \to 0^+$. However, the $\bigoh(\delta \log \delta)$  and $\bigoh(\delta^2 \log \delta)$ terms cannot be matched to the inner expansion; so, we modify the outer expansions assumed in \eqref{eq:origoutuandlamexp} for $u$ and $\lambda$ to include two switchback terms:
\begin{subequations}\label{eq:outuandlamexpsb1}
 \begin{equation}\label{eq:uouterexpnsb1}
  u = u_0 + (\delta \log\delta) u_{1/2} + \delta u_1+ (\delta^2 \log\delta) u_{3/2} + \delta^2 u_2+ \ldots,
 \end{equation}
and
 \begin{equation}\label{eq:lamexpnsb1}
  \lambda = \delta \lambda_1 + (\delta^2 \log \delta) \lambda_{3/2} + \delta^2 \lambda_2 +\ldots.
\end{equation}
\end{subequations}
By substituting \eqref{eq:outuandlamexpsb1} into \eqref{eq:pmcoriginalsym}, we find that $u_{1/2}$ satisfies
\begin{alignat}{2} \nonumber
  & u_{1/2}'' = 0,  \quad 0< x<1; & \qquad  \ u_{1/2}(1) =0.
\end{alignat}
Thus, the solution for $u_{1/2}$ is given by 
\begin{equation}\label{eq:switchbacksolutionu12}
 u_{1/2}(x) = A_{1/2}(x-1),
\end{equation}
where $A_{1/2}$  is a constant chosen to eliminate the order $\bigoh(\delta \log \delta)$ term. Similarly, from using \eqref{eq:outuandlamexpsb1} and \eqref{eq:switchbacksolutionu12} in \eqref{eq:pmcoriginalsym} we find that $u_{3/2}$ satisfies the ordinary differential equation, 
\begin{equation}\nonumber
\begin{alignedat}{2}
  & u_{3/2}'' =\frac{2 A_{1/2} (1 + \eps^2)^{3/2} \lambda_1}{x^3} \\
  	& \qquad +\frac{ \sqrt{1 + \eps^2}(A_{1/2}\lambda_1 (\eps^2-2)  + (1 + \eps^2) \lambda_{3/2})}{x^2},  \quad 0< x<1;\\
  &u_{3/2}(1) =0,
\end{alignedat}
\end{equation}
whose solution is
\begin{equation}\label{eq:switchbacksolutionu32}
\begin{aligned}
 u_{3/2}(x) & = \frac{A_{1/2} \lambda_1(1 + \eps^2)^{3/2}}{x} -\sqrt{1 + \eps^2} (A_{1/2} (\eps^2-2) \lambda_1 + (1 + \eps^2) \lambda_{3/2}) \log {x} \\
	& \qquad -A_{1/2} (1 + \eps^2)^{3/2} \lambda_1 - A_{3/2}(x-1).
\end{aligned}
\end{equation}
Upon substituting \eqref{eq:ord1outersoln}, \eqref{eq:ord2outersoln}, \eqref{eq:switchbacksolutionu12} and \eqref{eq:switchbacksolutionu32} into \eqref{eq:outuandlamexpsb1} and rewriting result in terms of $y=x/\delta$, we have that the local behavior of the outer expansion as $x\to 0^+$  is given by
\begin{equation}\label{eq:localbehavioroutersolution1}
\begin{aligned}
  u &= -1 + \delta \log \delta \left(-A_{1/2} -\lambda_1(1 + \eps^2)^{3/2} + \frac{(1 + \eps^2)^3 \lambda_1^2}{ y} + \frac{A_{1/2} \lambda_1(1 + \eps^2)^{3/2}}{y}\right) \\
	& \quad +  \delta \left( y-\lambda_1(1 + \eps^2)^{3/2} \log y+ B_1 + (1 + \eps^2)^3 \lambda_1^2\frac{\log y}{ y}\right. \\
	& \hspace{2in}+ \left.\frac{3\lambda_1^2(1 + \eps^2)^{2}- 2 B_1 \lambda_1(1 + \eps^2)^{3/2}  }{2 y}\right) \\
	& \quad + (\delta^2 \log^2{\delta})\left(-\sqrt{1 + \eps^2} (A_{1/2} (\eps^2-2) \lambda_1 + (1 + \eps^2) \lambda_{3/2})\right) \\
	& \quad + (\delta^2 \log{\delta}) \left(A_{1/2}y-\sqrt{1 + \eps^2} (A_{1/2} (\eps^2-2) \lambda_1 + (1 + \eps^2) \lambda_{3/2})  \log{ y}  \right. \\
	& \qquad \ \ \left.-A_{1/2} (1 + \eps^2)^{3/2} \lambda_1 + A_{3/2} + B_1 \lambda_1 ( \eps^2-2) \sqrt{1 + \eps^2} -(1 + \eps^2)^{3/2} \lambda_2\right) \\
	& \quad + \delta^2 \left(-B_1 y +  (B_1 \lambda_1 ( \eps^2-2) \sqrt{1 + \eps^2} -(1 + \eps^2)^{3/2} \lambda_2) \log{y}+  C_1\right) + \littleoh(\delta^2).  \\
	& \quad 
 \end{aligned}
\end{equation}
Therefore to eliminate the $\bigoh(\delta\log\delta)$ terms from \eqref{eq:localbehavioroutersolution1}, we choose $A_{1/2}=-\lambda_1(1 + \eps^2)^{3/2}$, and \eqref{eq:localbehavioroutersolution1} becomes
\[
\begin{aligned}
  u &= -1 +  \delta \left( y-\lambda_1(1 + \eps^2)^{3/2} \log y+ B_1 + (1 + \eps^2)^3 \lambda_1^2\frac{\log y}{ y} \right.\\
  	& \hspace{2in}+\left. \frac{3\lambda_1^2(1 + \eps^2)^{2}- 2 B_1 \lambda_1(1 + \eps^2)^{3/2}  }{2 y}\right) \\
	& \ + (\delta^2 \log^2{\delta})\left(\lambda_1^2(1 + \eps^2)^{2} (\eps^2-2) - (1 + \eps^2)^{3/2}\lambda_{3/2})\right) \\
	& \ + (\delta^2 \log{\delta}) \left(A_{1/2}y-\sqrt{1 + \eps^2} (A_{1/2} (\eps^2-2) \lambda_1 + (1 + \eps^2) \lambda_{3/2})  \log{ y} \right. \\
	&  \hspace{0.5in}   - \left. A_{1/2} (1 + \eps^2)^{3/2} \lambda_1 + A_{3/2} + B_1 \lambda_1 ( \eps^2-2) \sqrt{1 + \eps^2} -(1 + \eps^2)^{3/2} \lambda_2\right) \\
	& \ + \delta^2 \left(-B_1 y +  (B_1 \lambda_1 ( \eps^2-2) \sqrt{1 + \eps^2} -(1 + \eps^2)^{3/2} \lambda_2) \log{y}+  C_1\right) + \littleoh(\delta^2).  \\
 \end{aligned}
\]
But the new order $\bigoh(\delta^2 \log^2 \delta)$ term cannot be matched to the inner expansion, and again we must modify our assumed outer asymptotic expansion, \eqref{eq:uouterexpnsb1}, to include another switchback term:
\begin{equation}\label{eq:finaltrueuouterexpansion}
 u = u_0 + (\delta \log\delta) u_{1/2} + \delta u_1+ (\delta^2 \log^2\delta) u_{5/4}+ (\delta^2 \log\delta) u_{3/2} + \delta^2 u_2+ \cdots.
\end{equation}
Inserting \eqref{eq:finaltrueuouterexpansion} and \eqref{eq:lamexpnsb1} into \eqref{eq:pmcoriginalsym}, we obtain
\[
\begin{alignedat}{2}
  & u_{5/4}'' = 0,  \quad 0< x<1; & \qquad  \ u_{5/4}(1) =0, \label{eq:outerord54}
\end{alignedat}
\]
 which upon solving gives 
\[
 u_{5/4}(x) = A_{5/4}(x-1),
\]
where $A_{5/4}$  is a constant chosen to eliminate the order $\bigoh(\delta^2\log^2 \delta)$ term. Specifically, we let
\begin{equation}\nonumber
 A_{5/4}=-\sqrt{1 + \eps^2} (A_{1/2} (\eps^2-2) \lambda_1 + (1 + \eps^2) \lambda_{3/2})
\end{equation}
and the local behavior of the outer solution as $x\to 0^+$ is 
\begin{equation}\label{eq:localbehavioroutersolution2}
\begin{aligned}
  u &= -1 +  \delta \left( y-\lambda_1(1 + \eps^2)^{3/2} \log y+ B_1 + (1 + \eps^2)^3 \lambda_1^2\frac{\log y}{ y} \right.\\
   & \hspace{2in} \left. +\frac{3\lambda_1^2(1 + \eps^2)^{2}- 2 B_1 \lambda_1(1 + \eps^2)^{3/2}  }{2 y}\right) \\
	& \ + (\delta^2 \log{\delta}) \left(-\lambda_1(1 + \eps^2)^{3/2}y +(\lambda_1^2(1 + \eps^2)^{2} (\eps^2-2) - (1 + \eps^2)^{3/2}\lambda_{3/2}))\log{ y}\right. \\
	& \hspace{.7in}   + \left. \lambda_1^2(1 + \eps^2)^{3} + A_{3/2}+B_1 \lambda_1 ( \eps^2-2) \sqrt{1 + \eps^2} -(1 + \eps^2)^{3/2} \lambda_2\right) \\
	& \quad + \delta^2 \left(-B_1 y +  (B_1 \lambda_1 ( \eps^2-2) \sqrt{1 + \eps^2} -(1 + \eps^2)^{3/2} \lambda_2) \log{y}+  C_1\right) +\littleoh(\delta^2)
 \end{aligned}
\end{equation}

Besides the necessity for matching, the behavior given in \eqref{eq:localbehavioroutersolution2}, along with \eqref{eq:localvar}, suggests that the inner solution should be expanded as 
\[
 w = w_0 +  (\delta \log\delta) w_{1/2} + \delta w_1 + \ldots.
\]
From \eqref{eq:pmcoriginalsym}, \eqref{eq:innerbalance}, \eqref{eq:localbehavioroutersolution2} and the limiting behavior $u(0) = -1 + \delta$ as $\delta \to 0^+$, we obtain the following series of inner problems: 
\begin{equation}\label{eq:innerord1}
\begin{alignedat}{1}
  &\left(\frac{w_0'}{\sqrt{1 + \eps^2(w_0')^2)}}\right)' = \frac{\lambda_1}{ w_0^2},  \quad 0< y<\infty;  \qquad w_0'(0) =  0, \ w_0(0) = 1, \\
 & w_0 = y  - \lambda_1(1 + \eps^2)^{3/2} \log y+  B_1 + \littleoh(1)  \quad \mbox{as } y \to \infty,
\end{alignedat}
\end{equation}
\begin{equation}\label{eq:innerord2}
\left.
\begin{alignedat}{1}
  & \calig{L}w_{1/2} = \frac{\lambda_{3/2}}{ w_0^2},  \quad 0< y<\infty;  \quad   w_{1/2}'(0) =  0, \ w_{1/2}(0) = 0, \\
  & \begin{aligned}w_{1/2} = -\lambda_1(1 & + \eps^2)^{3/2}y +(\lambda_1^2(1 + \eps^2)^{2} (\eps^2-2)  - (1 + \eps^2)^{3/2}\lambda_{3/2}))\log{ y}  \\ & +\chi + \littleoh(1) \quad \mbox{as } y \to \infty, \end{aligned}
\end{alignedat}\right.
\end{equation}
 and
\begin{equation}\label{eq:innerord3}
\begin{alignedat}{1}
  & \calig{L}w_1 = \frac{\lambda_2}{ w_0^2},  \quad 0< y<\infty; \qquad  w_1'(0) =  0, \ w_1(0) = 0, \\
 & \begin{aligned} w_1 = - B_1& y  +  (B_1 \lambda_1 ( \eps^2-2) \sqrt{1 + \eps^2} -(1 + \eps^2)^{3/2} \lambda_2) \log{y} \\
 	& + C_1 +\littleoh(1) \ \mbox{as } y \to \infty,\end{aligned}
\end{alignedat}
\end{equation}
where
\begin{equation}\nonumber
 \calig{L}\varphi \colonequals \left(\frac{\varphi'}{(1 + \eps^2(w_0')^2)^{3/2}} \right)' + \frac{2 \lambda_1}{w_0^3} \varphi.
\end{equation}
In particular, the solution of these inner problems uniquely determine $\lambda_1$, $\lambda_{3/2}$, $\lambda_2$, $B_1$, $A_{3/2}$ and $C_1$. Note that the far field condition $w'_0(\infty)=1$ in \eqref{eq:innerord1} fixes the value of $\lambda_1$, which in turn allows the solution of \eqref{eq:innerord1}{\emdash}and accordingly the value of $B_1${\emdash}to be uniquely determined. In \eqref{eq:innerord2}, the now fixed far field condition $w'_{1/2}(\infty)=-\lambda_1(1+\eps^2)^{3/2}$, uniquely determines the solution $(\lambda_{3/2},w_{1/2})$ and, consequently, the value of $\chi$. By comparing \eqref{eq:innerord2} with \eqref{eq:localbehavioroutersolution2}, the following linear equation relating the unknowns $A_{3/2}$, $\chi$ and $\lambda_2$ is obtained
\[
\chi = \lambda_1^2(1 + \eps^2)^{3} + A_{3/2}+B_1 \lambda_1 ( \eps^2-2) \sqrt{1 + \eps^2} -(1 + \eps^2)^{3/2} \lambda_2.
\]
This process can be continued to fix the values of $\lambda_2$ and $C_1$ in \eqref{eq:innerord3}.

To determine $\lambda_1$, we first multiply the ODE of \eqref{eq:innerord1} by $w_0'$, and observe that $w_0$ satisfies the following first integral for $y\in(0,\infty)$:
\begin{equation}\label{eq:1stint}
 - \frac{1}{\eps^2\sqrt{1+\eps^2(w_0'(y))^2}} +\lambda_1\frac{1}{w_0(y)}= - \frac{1}{\eps^2} +\lambda_1.
\end{equation}
Then after taking $y \to \infty$ and using the limiting behavior of $w_0$ given in \eqref{eq:innerord1}, \eqref{eq:1stint} yields
\begin{equation}\nonumber
 \lambda_1  = \frac{\sqrt{1+\eps^2}-1}{\eps^2\sqrt{1+\eps^2}}.
\end{equation}

To determine $B_1$, we first solve for $w_0'$ in \eqref{eq:1stint} to find 
\[
 \D{w_0}{y}  = \frac{\sqrt{\lambda_1}\sqrt{(2 - \eps^2 \lambda_1) w_0^2 -2(1 - \eps^2 \lambda_1) w_0-\eps^2 \lambda_1}}{\eps^2 \lambda_1 + (1 - \eps^2 \lambda_1) w_0} , \ 0<y<\infty; \quad w_0(0)=1.
\]
An integration of this ODE yields 
\begin{equation}\label{eq:integralasymbehavior}
 \frac{1}{\sqrt{\lambda_1}}\int_1^{w_0(y)}\frac{\eps^2 \lambda_1 + (1 - \eps^2 \lambda_1) z}{\sqrt{(2 - \eps^2 \lambda_1) z^2 -2(1 - \eps^2 \lambda_1) z-\eps^2 \lambda_1}}\ud z = y,
\end{equation}
where now the integral on the left-hand side can be explicitly computed and then expanded for $y \gg 1$ (see \ref{app:intexpn}). This computation yields  
\[
\begin{alignedat}{2}
 y   & = \frac{1}{\sqrt{\lambda_1}}\frac{(1 - \eps^2 \lambda_1)}{ \sqrt{2 - \eps^2 \lambda_1}}w_0 + \frac{1}{\sqrt{\lambda_1}\left(2 - \eps^2 \lambda_1\right)^{3/2}}\log{w_0} \\
 & \qquad + \frac{\log\left(4 - 2\eps^2 \lambda_1\right) -(1 - \eps^2 \lambda_1)^2}{\sqrt{\lambda_1}\left(2 - \eps^2 \lambda_1\right)^{3/2}}+ \bigoh\left(w_0^{-1} \right)
\end{alignedat}
\]
as $w_0 \to \infty$, so that 
\begin{equation}\label{eq:w0farfieldbehavior}
 w_0 \sim y - \lambda_1(1 + \eps^2)^{3/2} \log{y} -  \lambda_1(1 + \eps^2)^{3/2}\left(\log\left(4 - 2\eps^2 \lambda_1\right) -\frac{1}{1+\eps^2}\right)
\end{equation}
as $y \to \infty$. Then in comparing \eqref{eq:w0farfieldbehavior} with the far field behavior in \eqref{eq:innerord1},  we find  
\begin{equation}\nonumber
  B_1 = -  \lambda_1(1 + \eps^2)^{3/2}\left(\log\left(4 - 2\eps^2 \lambda_1\right) -\frac{1}{1+\eps^2}\right).
\end{equation}

Next we determine $\lambda_{3/2}$. From Green's second identity and a differentiation of \eqref{eq:innerord1}, we obtain
\[
 \lim_{R\to \infty}\int_0^R \left( w_0' \calig{L}w_{1/2} - w_{1/2} \calig{L}w_0'   \right) \ud x = \lim_{R\to\infty} \left.  \frac{(w_0'w_{1/2}'-w_{1/2} w_0'')}{(1+\eps^2 (w_0')^2)^{3/2}}\right|_{y=0}^{y=R}
\]
and $\calig{L} w_0' = 0$ in $(0,\infty)$, respectively. Therefore, plugging the latter into the former{\emdash}along with the boundary conditions and limiting behavior of $w_0$ and $w_1${\emdash}gives 
\begin{equation}\nonumber
 \lambda_{3/2} \lim_{R\to \infty}\int_0^R \left(\frac{w_0'}{ w_0^2}\right) \ud x =-\lambda_1,
\end{equation}
and consequently, $\lambda_{3/2} = -\lambda_1$. Similarly, to determine $\lambda_2$, we have from Green's second identity, \eqref{eq:innerord1} and \eqref{eq:innerord3} that
\begin{equation}\nonumber
 \lim_{R\to \infty}\int_0^R \left( w_0' \calig{L}w_1 - w_1 \calig{L}w_0'   \right) \ud x = \lim_{R\to\infty} \left.  \frac{(w_0'w_1'-w_1 w_0'')}{(1+\eps^2 (w_0')^2)^{3/2}}\right|_{y=0}^{y=R} =   \frac{-B_1}{(1+\eps^2)^{3/2}}.
\end{equation}
Hence,
\begin{equation}\nonumber
 \lambda_2 =   - \frac{B_1}{(1+\eps^2)^{3/2}} =\lambda_1\left(\log\left(4 - 2\eps^2 \lambda_1\right) -\frac{1}{1+\eps^2}\right).
\end{equation}
The determination of $C_1$ from \eqref{eq:innerord3}, which is tedious and gives no special insight into the expansion for $\lambda$, is omitted.

We now summarize the preceding analysis for the maximal solution branch of \eqref{eq:pmcoriginalsym}.

\begin{pr}\label{pr:1d_conc}
 For $\delta = 1 + u(0) \to 0^+$, the three-term asymptotic expansion for the maximal solution branch of \eqref{eq:pmcoriginalsym} is given by 
\begin{equation}\label{eq:1dasymmaximalbranch}
 \lambda = \delta \lambda_1 - (\delta^2 \log{\delta})\lambda_1 + \delta^2 \lambda_1\left(\log\left(4 - 2\eps^2 \lambda_1\right) -\frac{1}{1+\eps^2}\right) + \littleoh(\delta^2)
\end{equation}
where $$\lambda_1  = \frac{\sqrt{1+\eps^2}-1}{\eps^2\sqrt{1+\eps^2}}.$$ 
\end{pr}

Figure \ref{fig:mcbds1dasymp} compares the asymptotic result of \eqref{eq:1dasymmaximalbranch} and previous numerical results for bifurcation diagram $\lambda$ versus $|u(0)|$ of \eqref{eq:pmcoriginalsym}. As seen, the three-term result is quite accurate. We also remark that this asymptotic formulation does not predict the dead-end point $\lambda_{\ast}$ seen in Fig.~\ref{fig:mcbds1d}.

\begin{figure}[h]
\centering
\subfigure[$\eps =1/2$]{\label{fig:mcbds1dleqasymp}\includegraphics[width=0.425\textwidth]{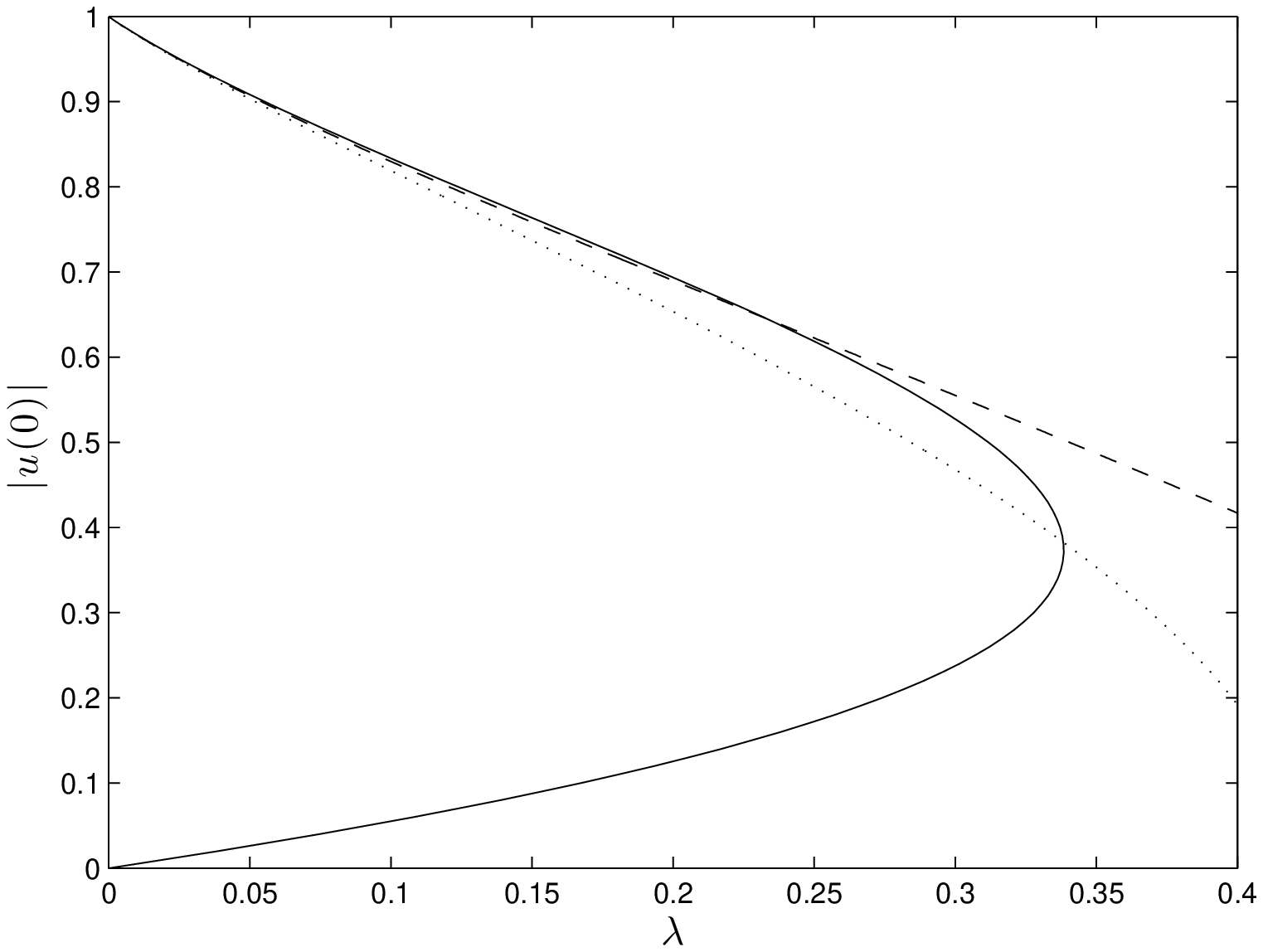}}\qquad
\subfigure[$\eps =10/3$]{\label{fig:mcbds1dgasymp}\includegraphics[width=0.425\textwidth]{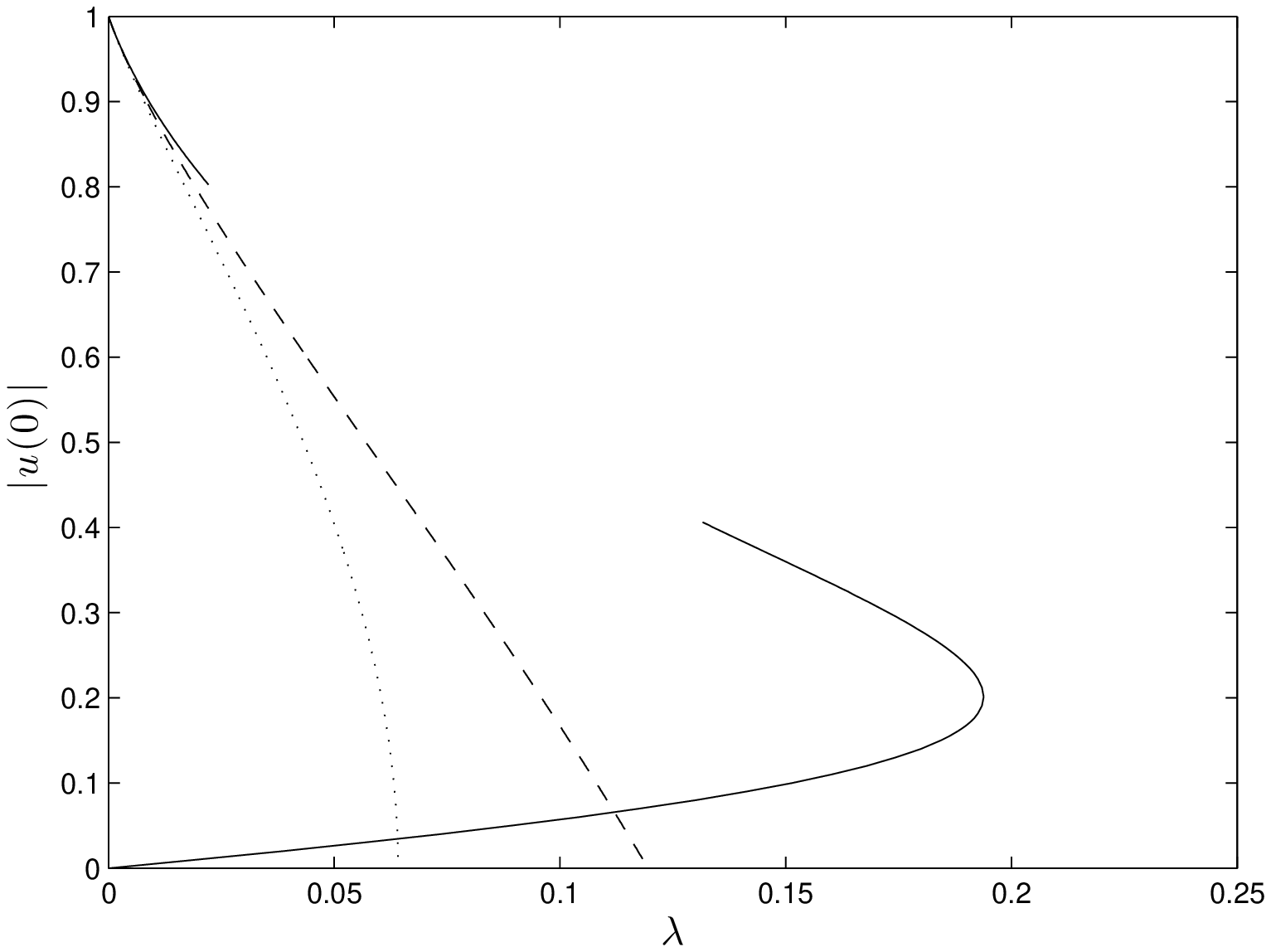}}
\caption{The two (dotted) and three (dashed) term asymptotic expansions from \eqref{eq:1dasymmaximalbranch} of the maximal solution branch  of \eqref{eq:pmcoriginalsym} compared with the full numerics (solid).\label{fig:mcbds1dasymp}}
\end{figure}

\section{Two-dimensional analysis}\label{section:2d}

\subsection{A necessary condition for existence of solutions}\label{subsection:nc}

In this section, we investigate the disappearance of classical solutions of \eqref{eq:gov2dradpde} observed in \cite{brubaker2011nonlinear} using techniques similar  to ones  introduced in \cite{finn1986equilibrium} to study the behavior of pendant drops. For convenience, we introduce the change of variable $\rho = r/\eps$, which from \eqref{eq:gov2dradpde} yields the nonlinear ordinary differential equation
\bsub\label{eq:gov2dradpde_cov}
\be\label{eq:gov2dradpde_cov_ode}
  \frac{1}{\rho} \left( \frac{\rho  u'  }{\sqrt{1 + (u')^2}}\right)' = \frac{\eps^2 \lambda }{(1+u)^2},   \quad 0<\rho <\eps^{-1}; \qquad   u'(0) = 0, \ u(\eps^{-1})=0,
\ee
with 
\be\label{eq:gov2dradpde_cov_bound}
  -1<u(\rho)<0, \qquad 0<\rho <\eps^{-1}.
\ee
\esub
Here, $'$ now represents differentiation with respect to $\rho$. In this form, the ordinary differential equation in \eqref{eq:gov2dradpde_cov_ode} admits the very advantageous geometrical interpretation:  
\[
 \left(\rho \sin \psi \right)'  = \frac{\eps^2 \lambda \rho}{(1+u)^2},
\]
where $\psi$ is the angle of inclination of the solution curve $u$, measured counterclockwise from the positive $\rho$-axis to its tangent. It is important to note that these ODEs are equivalent on any interval in which $|u'(\rho)| < \infty$.

Now to study the non-existence of solutions of \eqref{eq:gov2dradpde_cov}, we look at a corresponding initial value problem,
\be\label{eq:gov2dradpde_cov_ivp_ode}
  \left(\rho \sin \psi \right)'  = \frac{\eps^2 \lambda \rho }{(1+u)^2},   \quad \rho>0; \qquad   u'(0) = 0, \ u(0)=\alpha,
\ee
where $\alpha\in(-1,0)$. We begin by proving the following  lemma about solutions of \eqref{eq:gov2dradpde_cov_ivp_ode}.

\begin{lem}\label{lem:fp_ivp_increasing}
 If $u$ is a solution to \eqref{eq:gov2dradpde_cov_ivp_ode} for $ \rho \in [0,a)$, then $\sin \psi > 0$ for $ \rho \in (0,a)$, which implies that $u$ is increasing on that interval. Furthermore, we have the following bound:
\be\label{eq:bound1}
 \frac{\eps^2 \lambda}{2(1+ u(\rho))^2} < \frac{\sin \psi}{\rho} <\frac{\eps^2 \lambda}{2(1+\alpha)^2}.
\ee
\end{lem}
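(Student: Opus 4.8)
The plan is to integrate the ODE \eqref{eq:gov2dradpde_cov_ivp_ode} directly, exploiting the fact that the right-hand side is strictly positive. First I would integrate $(\rho\sin\psi)'=\eps^2\lambda\rho/(1+u)^2$ from $0$ to $\rho$; since $\rho\sin\psi$ vanishes at $\rho=0$ (because $\rho\to 0$ and $\sin\psi$ is bounded), this gives
\[
 \rho\sin\psi(\rho)=\eps^2\lambda\int_0^\rho \frac{t}{(1+u(t))^2}\,dt,
\]
and the integrand is strictly positive on $(0,a)$, so $\sin\psi>0$ on $(0,a)$. Since $\tan\psi=u'$ and $\sin\psi>0$ forces $\psi\in(0,\pi)$, we get $u'>0$ wherever $|u'|<\infty$; together with the initial condition $u'(0)=0$ this yields that $u$ is strictly increasing on $[0,a)$. (One should note that the geometric form is being used on $(0,a)$, which is legitimate since a solution $u$ exists there with $|u'|<\infty$.)

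The bound \eqref{eq:bound1} then follows by monotonicity of $u$ inside the integral. Because $\alpha=u(0)\le u(t)\le u(\rho)$ for $0\le t\le\rho$, and $(1+u)^{-2}$ is a decreasing function of $u$ on $(-1,0)$, we have
\[
 \frac{1}{(1+u(\rho))^2}\le \frac{1}{(1+u(t))^2}\le \frac{1}{(1+\alpha)^2},
\]
with strict inequality on a set of positive measure (since $u$ is strictly increasing, $u(t)<u(\rho)$ for $t<\rho$ and $u(t)>\alpha$ for $t>0$). Multiplying by $t$, integrating over $(0,\rho)$, and using $\int_0^\rho t\,dt=\rho^2/2$ gives
\[
 \frac{\eps^2\lambda\,\rho^2}{2(1+u(\rho))^2}<\rho\sin\psi(\rho)<\frac{\eps^2\lambda\,\rho^2}{2(1+\alpha)^2},
\]
and dividing through by $\rho>0$ yields exactly \eqref{eq:bound1}.

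There is no serious obstacle here; the only points requiring a little care are justifying that $\rho\sin\psi\to 0$ as $\rho\to 0^+$ (immediate, since $|\sin\psi|\le 1$), and confirming that the strict inequalities survive the integration — this needs the strict monotonicity of $u$, which we have just established, so that the integrand is not constant. I would also remark explicitly that the lemma is stated for a solution assumed to exist on $[0,a)$ with finite derivative, so the passage between the divergence form and the $\psi$-form of the equation is valid throughout $(0,a)$.
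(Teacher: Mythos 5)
Your proof is correct and follows essentially the same route as the paper: integrate the divergence-form equation $(\rho\sin\psi)'=\eps^2\lambda\rho/(1+u)^2$ to get $\sin\psi=\frac{\eps^2\lambda}{\rho}\int_0^\rho \xi(1+u(\xi))^{-2}\,d\xi>0$, conclude monotonicity of $u$, and then bound the integrand using $\alpha<u(\xi)<u(\rho)$; you simply supply a few details (vanishing of the boundary term, strictness of the inequalities) that the paper leaves implicit. The only nit is the final phrase ``dividing through by $\rho$'': to pass from the displayed inequality for $\rho\sin\psi$ to \eqref{eq:bound1} one divides by $\rho^2$, but the displayed inequalities and conclusion are correct as written.
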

\begin{proof}
An integration of the differential equation in \eqref{eq:gov2dradpde_cov_ivp_ode}  yields
\be\label{eq:integral1}
 \sin \psi  =  \frac{\eps^2 \lambda}{\rho} \int_0^\rho \frac{\xi}{(1+u(\xi))^2} \ud{\xi} > 0, 	\quad \rho \in (0,a).
\ee
Now since $u$ is increasing on $(0,a)$, we have $\alpha < u(\rho) < u(a)$ for $\rho \in (0,a)$, and  \eqref{eq:integral1} gives \eqref{eq:bound1}.
\end{proof}

Next we prove a crucial lemma about the solutions the solutions of \eqref{eq:bound1}; however, first we state a comparison principle \cite{finn1986equilibrium}, which we use in our proof.

\begin{lem}[Comparison Principle]\label{lem:fp_comp}
  Let $\calig{F}(\xi,\eta)$ be a function such that for all $\xi>0$,
  \begin{equation}\nonumber
   \pd{\calig{F}}{\eta}(\xi,\eta)>0.
  \end{equation} 
  Furthermore, assume that $w_1(\xi)$ and $w_2(\xi)$ are functions defined for $\xi \in [a,b]$ such that 
  \[
   \pd{}{\xi} \left[\calig{F}\left(\xi,\pd{w_1}{\xi} \right)\right]  \geq \pd{}{\xi} \left[\calig{F}\left(\xi,\pd{w_2}{\xi} \right)\right]
  \]
on $[a,b]$. Suppose that 
  \[
   \pd{w_1}{\xi}(a) \geq \pd{w_2}{\xi}(a).
  \]
  Then 
  \be\label{eq:ip_inequalities}
   \pd{w_1}{\xi}(b) \geq \pd{w_2}{\xi}(b), \qquad {w_1}(b) -{w_2}(b) \geq {w_1}(a) - {w_2}(a).
  \ee
  Moreover, equality of \eqref{eq:ip_inequalities} holds if and only if $w_1(\xi) = w_2(\xi) + \mbox{constant}$ on $[a,b]$.
\end{lem}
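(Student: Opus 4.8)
The plan is to reduce the statement to a monotonicity argument for a single scalar function. Set $h_i(\xi) \ce \calig{F}(\xi,\partial_\xi w_i(\xi))$ for $i=1,2$. The structural hypothesis is exactly the assertion that $\partial_\xi h_1 \ge \partial_\xi h_2$ on $[a,b]$, so $h_1-h_2$ is nondecreasing there. Evaluating at $\xi=a$ and using that $\eta\mapsto\calig{F}(a,\eta)$ is increasing (a consequence of $\pd{\calig{F}}{\eta}>0$) together with $\partial_\xi w_1(a)\ge\partial_\xi w_2(a)$ gives $h_1(a)-h_2(a)\ge 0$. Combining, $h_1(\xi)\ge h_2(\xi)$ for every $\xi\in[a,b]$, i.e. $\calig{F}(\xi,\partial_\xi w_1(\xi))\ge\calig{F}(\xi,\partial_\xi w_2(\xi))$.

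Next I would invoke the strict monotonicity of $\calig{F}(\xi,\cdot)$ — which makes it injective with an increasing inverse — to transfer this to $\partial_\xi w_1(\xi)\ge\partial_\xi w_2(\xi)$ for all $\xi\in[a,b]$; specializing to $\xi=b$ yields the first inequality in \eqref{eq:ip_inequalities}. Integrating the pointwise inequality $\partial_\xi w_1\ge\partial_\xi w_2$ over $[a,b]$ gives $w_1(b)-w_1(a)\ge w_2(b)-w_2(a)$, which rearranges to the second inequality.

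For the rigidity claim the ``if'' direction is immediate: $w_1=w_2+\mbox{constant}$ forces $\partial_\xi w_1\equiv\partial_\xi w_2$, so both inequalities become equalities. For ``only if'', suppose $\partial_\xi w_1(b)=\partial_\xi w_2(b)$, so $h_1(b)=h_2(b)$; since $h_1-h_2$ is nondecreasing with $h_1(a)-h_2(a)\ge 0$ we get $0\le h_1(a)-h_2(a)\le h_1(b)-h_2(b)=0$, whence $h_1\equiv h_2$ on $[a,b]$, and strict monotonicity of $\calig{F}(\xi,\cdot)$ then forces $\partial_\xi w_1\equiv\partial_\xi w_2$, i.e. $w_1-w_2$ is constant. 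If instead $w_1(b)-w_2(b)=w_1(a)-w_2(a)$, then $\int_a^b(\partial_\xi w_1-\partial_\xi w_2)\ud\xi=0$ with continuous nonnegative integrand, so again $\partial_\xi w_1\equiv\partial_\xi w_2$.

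I do not expect a genuine obstacle here: this is essentially Finn's one-dimensional comparison principle \cite{finn1986equilibrium} and the argument is short. The only points that warrant a little care are the regularity needed to pass from the pointwise differential inequality $\partial_\xi h_1\ge\partial_\xi h_2$ to ``$h_1-h_2$ nondecreasing'' — the hypothesis implicitly grants that $\xi\mapsto\calig{F}(\xi,\partial_\xi w_i(\xi))$ is differentiable on $[a,b]$, so this follows from the mean value theorem (or, if one only has the inequality almost everywhere with $h_i$ absolutely continuous, from the integrated form) — and keeping the chain of inequalities closed at both endpoints in the equality analysis.
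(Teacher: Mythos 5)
Your proof is correct, and it is the standard argument: the paper itself gives no proof of this lemma, simply quoting it from Finn's book \cite{finn1986equilibrium}, and your reduction to the monotonicity of $h_1-h_2$ where $h_i(\xi)=\calig{F}(\xi,\partial_\xi w_i)$, followed by inversion of the strictly increasing map $\eta\mapsto\calig{F}(\xi,\eta)$ and integration, is precisely the classical route. The only cosmetic point is at a left endpoint $a=0$ (which is how the lemma is actually applied here, with $\calig{F}(\xi,\eta)=\xi\eta/\sqrt{1+\eta^2}$): there $\partial\calig{F}/\partial\eta$ vanishes, so $\calig{F}(a,\cdot)$ is only nondecreasing and not injective, but $h_1(a)\ge h_2(a)$ still holds trivially and the transfer back to $\partial_\xi w_1\ge\partial_\xi w_2$ is needed only for $\xi>0$, so nothing breaks.
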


Now we can prove the following theorem.

\begin{lem}\label{lem:fp_main}
 Assume that $u(\rho;\lambda)$ is a solution to \eqref{eq:gov2dradpde_cov_ivp_ode} on an initial interval, where $\lambda>0$. Also, let $\eps>0$ be fixed. If
 \bsub\label{eq:fp_u0_lambda_bound}
 \be\label{eq:fp_u0_lambda_bound_a}
   \alpha < -1 + \frac{\left(3-2\sqrt{2}\right)}{2}\eps^2 \lambda, \quad \mbox{when} \quad 0<\lambda\leq  \frac{4+3 \sqrt{2}}{\eps^2},
\ee
or
\be\label{eq:fp_u0_lambda_bound_b}
    \alpha <  \frac{\eps \sqrt{\lambda(\eps^2 \lambda-8)}-\eps^2 \lambda-4}{4 (\eps^2 \lambda+1)} < -1 + \frac{\left(3-2\sqrt{2}\right)}{2}\eps^2 \lambda, \ \ \mbox{when} \ \ \lambda > \frac{4+3 \sqrt{2}}{\eps^2},
\ee
 \esub
 then there exists a $\rho_1$ such that $u(\rho)$ cannot be continued beyond as a solution of \eqref{eq:gov2dradpde_cov_ivp_ode}. Moreover, at this point $(\rho_1,u_1)$, where $u_1 \ce u(\rho_1)$, the slope of $u'$ becomes vertical and 
\begin{align}
 \frac{2 (1+\alpha)^2}{\eps^2 \lambda } & < \rho_1 \leq \frac{2\left(1+M\right)^{2}}{\eps^2 \lambda},\label{eq:fp_bound_rho1}\\
 \alpha + \frac{2 (1+\alpha)^2}{\eps^2 \lambda}  &  < u_1<  M <0,\label{eq:fp_bound_u1}
\end{align}
where 
\be\label{eq:fp_def_M}
 M \ce \frac{3  \alpha+1}{2}-\frac{ (1+\alpha)^2}{\eps^2 \lambda}- \fr{(1+\alpha)\sqrt{\eps^4 \lambda^2-12 \eps^2 \lambda (\alpha+1)+4 (\alpha+1)^2}}{2 \eps^2 \lambda}.
\ee
\end{lem}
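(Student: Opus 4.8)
The plan is to follow the pendant-drop strategy used for \eqref{eq:gov2dradpde_cov_ivp_ode}: combine the a priori inequalities of Lemma~\ref{lem:fp_ivp_increasing} with the comparison principle of Lemma~\ref{lem:fp_comp}, applied with $\calig{F}(\xi,\eta)=\xi\eta/\sqrt{1+\eta^2}$, so that $\partial_\xi\calig{F}(\xi,u'(\xi))=(\rho\sin\psi)'|_{\rho=\xi}=\eps^2\lambda\,\xi/(1+u(\xi))^2$ by \eqref{eq:gov2dradpde_cov_ivp_ode}, and with comparison functions of constant mean curvature, i.e. solutions of $\rho^{-1}(\rho\sin\psi)'=2H$ --- the ones issued from the apex $\rho=0$ being circular arcs $\sin\psi=H\rho$, which become vertical at $\rho=1/H$ after a rise of exactly $1/H$. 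Abbreviate $\mu\ce\eps^2\lambda$, $\beta\ce1+\alpha$, $m\ce1+M$; a direct computation identifies \eqref{eq:fp_def_M} with the statement that $m$ is the \emph{smaller} root of $\mu m^2-\beta(3\mu-2\beta)m+2\beta^2\mu=0$, equivalently $(m-\beta)(m-2\beta)=-2\beta^2m/\mu$, from which one reads off at once the elementary bounds $\beta+2\beta^2/\mu<m\le\beta\sqrt{2}$.

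\textbf{Reduction.} I would first reduce the lemma to the single claim that $u(\rho)<M$ throughout the maximal existence interval $[0,\rho_{\max})$ of the solution of \eqref{eq:gov2dradpde_cov_ivp_ode}. Granting this, $1+u\le1+M$ upgrades the left inequality of \eqref{eq:bound1} to $\sin\psi\ge H_M\rho$ with $H_M\ce\mu/(2(1+M)^2)$, forcing $\sin\psi\to1$ no later than $\rho=1/H_M=2(1+M)^2/\mu$; hence $\rho_1\ce\rho_{\max}<\infty$, $\psi\to\pi/2$ at $\rho_1$, and $u_1\ce u(\rho_1)=\sup_{[0,\rho_1)}u\le M$, while passing to the limit in the left inequality of \eqref{eq:bound1} gives $\rho_1\le2(1+u_1)^2/\mu\le2(1+M)^2/\mu$, which is the upper bound in \eqref{eq:fp_bound_rho1}. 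The strict inequality $u_1<M$ in \eqref{eq:fp_bound_u1} then follows from the strictness clause of Lemma~\ref{lem:fp_comp}: equality would force $u$ to coincide with a constant-mean-curvature profile, impossible since $\mu/(1+u)^2$ is strictly increasing along a genuine increasing solution. For the lower bounds: the right inequality of \eqref{eq:bound1} is strict for $\rho>0$, so $\sin\psi<H_\alpha\rho$ with $H_\alpha\ce\mu/(2\beta^2)$, whence $\rho_1>2\beta^2/\mu$; and comparing $u$ with the apex arcs $\sin\psi=H_\alpha\rho$ and $\sin\psi=H_M\rho$ (both issued from $(0,\alpha)$ with horizontal tangent) via Lemma~\ref{lem:fp_comp} squeezes $u_1$ into the interval $(\alpha+2\beta^2/\mu,\,M)$.

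\textbf{The key claim $u<M$.} Suppose not, and let $\rho_M\in(0,\rho_{\max})$ be the first point with $u(\rho_M)=M$, so $\alpha\le u\le M<0$ on $[0,\rho_M]$. Comparing $u$ with the apex arc of curvature $2H_\alpha=\mu/\beta^2$ --- admissible because $u\ge\alpha$ gives $2H_\alpha\,\xi\ge\mu\xi/(1+u)^2=\partial_\xi\calig{F}(\xi,u')$ --- shows that arc lies above $u$ and is steeper, so $u$ cannot attain $M>\alpha+2\beta^2/\mu$ before $\rho=1/H_\alpha$; hence $\rho_M>1/H_\alpha$, and at $\rho_0\ce1/H_\alpha$ one has $u(\rho_0)\le\alpha+2\beta^2/\mu$ and inclination $\psi_0<\pi/2$. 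On $[\rho_0,\rho_M]$ the bound $\sin\psi<H_\alpha\rho$ is vacuous, so I would iterate: since $1+u\le1+M$ there, the inclination of $u$ is majorized by that of the constant-mean-curvature profile of curvature $2H_M$ issued from $(\rho_0,u(\rho_0))$ at angle $\psi_0$, which in turn bounds the further rise of $u$ before it becomes vertical. Adding the height gained on $[0,\rho_0]$ to that gained on $[\rho_0,\rho_M]$ and requiring the total to equal $M-\alpha$ collapses, after simplification, to exactly $\mu m^2-\beta(3\mu-2\beta)m+2\beta^2\mu=0$ with $m=1+u(\rho_M)=1+M$; but $1+M$ is the \emph{smaller} root of that quadratic, and the accounting in fact forces $1+u(\rho_M)$ not to exceed it, so $u$ could not have reached $M$ while remaining a graph --- a contradiction. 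Hence $u<M$ on $[0,\rho_{\max})$, which closes the proof.

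\textbf{Role of \eqref{eq:fp_u0_lambda_bound}, and the main difficulty.} Rewriting in terms of $\mu/\beta$, condition \eqref{eq:fp_u0_lambda_bound_a} is equivalent to $\mu>(6+4\sqrt{2})\beta$, which makes the discriminant $\eps^4\lambda^2-12\eps^2\lambda(1+\alpha)+4(1+\alpha)^2$ nonnegative (so $M$ is real) and, when $\eps^2\lambda\le4+3\sqrt{2}$, forces $\beta<1/\sqrt{2}$; since $m\le\beta\sqrt{2}$, this yields $M<0$. When $\eps^2\lambda>4+3\sqrt{2}$ the estimate $\beta<1/\sqrt{2}$ may fail, and substituting $m=1$ into the quadratic shows that \eqref{eq:fp_u0_lambda_bound_b} is exactly the condition keeping the smaller root below $1$, i.e. $M<0$ --- the second inequality in \eqref{eq:fp_u0_lambda_bound_b} merely recording that \eqref{eq:fp_u0_lambda_bound_b} still implies $\mu>(6+4\sqrt{2})\beta$ and hence preserves nonnegativity of the discriminant. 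The main obstacle is the iteration in the key claim: once $\rho$ exceeds $1/H_\alpha$ the crudest arc has already turned vertical and the first-order bound on $\sin\psi$ carries no information, so one must feed the improved lower bound $1+u\ge1+u(1/H_\alpha)$ back into a second constant-mean-curvature comparison, and the delicate point is to arrange the geometry so that the two height contributions combine precisely into the quadratic underlying \eqref{eq:fp_def_M}. The remaining steps --- the reduction and the verification of the role of \eqref{eq:fp_u0_lambda_bound} --- are routine once that quadratic is identified.
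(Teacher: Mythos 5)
Your reduction, your identification of the quadratic $\mu m^2-\beta(3\mu-2\beta)m+2\beta^2\mu=0$ behind \eqref{eq:fp_def_M} (keeping your abbreviations $\mu=\eps^2\lambda$, $\beta=1+\alpha$, $m=1+M$), and your reading of the role of \eqref{eq:fp_u0_lambda_bound} all match the paper. The gap is exactly where you flag ``the main obstacle'': the key claim $u<M$. Your second-stage comparison runs Lemma~\ref{lem:fp_comp} backwards. On $[\rho_0,\rho_M]$ you have $1+u\le 1+M$, hence $\mu/(1+u)^2\ge 2H_M$, so $(\calig{F}(\rho,u'))'\ge(\calig{F}(\rho,w'))'$ for the constant-mean-curvature profile $w$ of curvature $2H_M$ with the same data at $\rho_0$; the comparison principle then gives $u'\ge w'$ and $u\ge w$, i.e.\ the $2H_M$ profile bounds $u$ from \emph{below}, not above \emdash it cannot ``bound the further rise of $u$.'' (An upper bound on the inclination would require the curvature bounded \emph{above}, i.e.\ using $1+u\ge\beta$ and the $2H_\alpha$ profile; but that profile issued from $(\rho_0,u(\rho_0))$ at angle $\psi_0$ is of Delaunay type, $\sin\varphi=H_\alpha\rho+c/\rho$ with $c\neq0$, not a circular arc, and its rise before turning vertical is not $1/H_\alpha$.) Moreover the assertion that the two height contributions ``collapse, after simplification, to exactly'' the quadratic is not substantiated and is implausible: piecewise-constant curvature bounds effectively linearize $\int\mu(1+u)^{-2}\,\mathrm{d}u$ and cannot reproduce the exact root $1+M$.

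The paper gets $u_1<M$ in one stroke, with no iteration. Writing the equation as $\frac{\sin\psi}{\rho}-(\cos\psi)_u=\frac{\mu}{(1+u)^2}$ (with $u$ as independent variable, legitimate by Lemma~\ref{lem:fp_ivp_increasing}) and inserting only the upper half of \eqref{eq:bound1}, $\sin\psi/\rho<\mu/(2\beta^2)$, one integrates from $\alpha$ to $u$ to obtain
\[
1-\cos\psi(u)>\mu\Bigl[\frac{1}{1+\alpha}-\frac{1}{1+u}\Bigr]-\frac{\mu(u-\alpha)}{2(1+\alpha)^2}.
\]
Since $\cos\psi\ge0$ for an increasing graph, the left side is at most $1$, while the right side reaches $1$ precisely at $u=M$, the smaller root of your quadratic; hence the slope must turn vertical at some $u_1<M$. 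This single exact integration is what produces \eqref{eq:fp_def_M}, and it replaces your entire two-arc accounting. The remaining bounds then go essentially as you describe (a hemispherical comparison in $\rho$ via Lemma~\ref{lem:fp_comp} for the upper bound in \eqref{eq:fp_bound_rho1}, and a comparison surface parameterized by $u$ \emdash not Lemma~\ref{lem:fp_comp} as stated \emdash for the lower bound $u_1>\alpha+2(1+\alpha)^2/(\eps^2\lambda)$).
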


\begin{proof} 
First, we note that since $u$ is a solution of \eqref{eq:gov2dradpde_cov_ivp_ode}, 
\be\label{eq:gov2dradpde_angle_ode_split}
 \frac{\sin \psi}{\rho}  + (\sin \psi)'   = \frac{\eps^2 \lambda}{(1+u)^2}
\ee
on the initial interval. Moreover, by Lemma \ref{lem:fp_ivp_increasing}, $u$ is increasing on this initial interval and we may use it as independent variable; thus, using 
 \be\label{eq:fp_curvature_relation}
  \D{}{r}(\sin \psi) = - \D{}{u} (\cos{\psi}),
 \ee
in \eqref{eq:gov2dradpde_angle_ode_split} and then integrating the result with respect to $u$, we obtain
\[
 \frac{\eps^2 \lambda}{2(1+\alpha)^2} (u-\alpha)  + (1 - \cos{\psi(u)})   > \eps^2 \lambda \left[\frac{1}{1+\alpha}-\frac{1}{1+u}\right],
\]
where we have used \eqref{eq:bound1}; or, equivalently
\[
 1 - \cos{\psi(u)}   > \frac{\eps^2 \lambda(u-\alpha) }{(1+\alpha)(1+u)} - \frac{\eps^2 \lambda(u-\alpha) }{2(1+\alpha)^2}.
\]
Hence if \eqref{eq:fp_u0_lambda_bound_a} or \eqref{eq:fp_u0_lambda_bound_b} is true, then a vertical slope appears at a value
\[
 u_1<  M,
\] 
where $M$ is defined in \eqref{eq:fp_def_M}. Furthermore, the requirements in \eqref{eq:gov2dradpde_cov_ivp_ode} imply that $M<0$.

First from \eqref{eq:bound1} we have that $u$ can be continued at least until $\rho = \eps^2 \lambda/(2 (1+\alpha)^2)$, which implies that $\rho_1 >\eps^2 \lambda/(2 (1+\alpha)^2)$.

Next, let $w$ be defined as
\[
 w(\rho) \ce \alpha + \beta - \sqrt{\beta^2 - \rho^2}, \qquad \beta \ce \frac{2\left(1+M\right)^{2}}{\eps^2 \lambda},
\]
so $w(0)= \alpha $. Now, 
\[
\begin{aligned}
 \left(\calig{F}(\rho,w')\right)' & = \frac{\eps^2 \lambda \rho}{\left(1+M\right)^{2}} < \frac{\eps^2 \lambda \rho}{(1+u_1)^2} < \frac{\eps^2 \lambda \rho}{(1+u)^2}  = \left(\calig{F}(\rho,u')\right)',
 \end{aligned}
\]
for $\rho \in (0,R)$, where $R = \min\{\rho_1,\beta\}$ and 
\be\label{eq:calig_F}
 \calig{F}(\xi,\eta) \ce \xi \fr{\eta}{\sqrt{1+\eta^2}},
\ee
which by Lemma \ref{lem:fp_comp} implies
\begin{equation}\nonumber
   u'(\rho) < w'(\rho), \qquad {u}(\rho) < {w}(\rho), \quad \rho \in (0,R)
\end{equation}
Therefore, if $\beta < \rho_1$, then $w'(\rho) \to \infty$ as $\rho \to R^-$, which is a contradiction, and hence, $\rho_1 \leq \beta.$

To get the last bound, we recall that for the solution graph $(\rho,u)$ of \eqref{eq:gov2dradpde_cov_ivp_ode} $u$ can be used as the independent variable for the solution graph $(\rho,u)$. 
Therefore using \eqref{eq:bound1} and \eqref{eq:fp_curvature_relation} in \eqref{eq:gov2dradpde_angle_ode_split} gives 
\be\label{eq:fp_ind_u_angle}
 (\cos \psi)_u> -   \frac{\eps^2 \lambda}{2(1+u)^2}
\ee
Now let $(\til{\rho}(u),u)$ be a comparison surface for $u \in [\alpha, \alpha + 2(1+\alpha)^2/(\eps^2 \lambda)]$, defined as
\be\label{eq:fp_ind_u_comp_surf}
(\til{\rho}(u),u), \qquad \til{\rho}(u) \ce \sqrt{\frac{4 (1+\alpha)^4}{\eps^4 \lambda^2 } - \left(\left[\alpha + \frac{2 (1+\alpha)^2}{\eps^2 \lambda }\right] -u\right)^2 },
\ee
with corresponding angle of inclination $\varphi$, measured counterclockwise from the positive $\til{\rho}$-axis to its tangent. Therefore, 
\[
 (\cos{\varphi})_u = - \frac{\eps^2 \lambda }{2 (1+\alpha)^2}
\]
and from \eqref{eq:fp_ind_u_angle}, we have $(\cos \psi)_u>  (\cos{\varphi})_u $ for each $u\in (\alpha, U)$, where $U \ce \min\{u_1, \alpha+ 2(1+\alpha)^2/(\eps^2\lambda)\}$. Therefore,  $ \cos \psi> \cos\varphi$ for $u$ in the same interval. Hence the solution graph $(\rho,u)$ of \eqref{eq:gov2dradpde_cov_ivp_ode} can be continued vertically until the comparison surface \eqref{eq:fp_ind_u_comp_surf} becomes vertical, i.e., 
\[
 u_1 > w\left(\frac{2 (1+\alpha)^2}{\eps^2 \lambda}\right) = \alpha + \frac{2 (1+\alpha)^2}{\eps^2 \lambda}.
\]
\end{proof}

Therefore, we have that for $\alpha$ satisfying \eqref{eq:fp_u0_lambda_bound}, if $u$ is a solution to \eqref{eq:gov2dradpde_cov_ivp_ode} then its derivative blows-up in finite time. Furthermore, from the requirements on $\alpha$ in \eqref{eq:fp_u0_lambda_bound}, the blow-up point $(\rho_1,u_1)$ must happens for $u_1 <0$. In using this crucial fact, we can establish the following theorem, which rigorously proves, for all $\eps>0$, the disappearing solution behavior of \eqref{eq:gov2dradpde} observed in \cite{brubaker2011nonlinear}.

\begin{thm}\label{thm:fp_mainthm}
 Let $\eps >0$ be fixed. Moreover, let $u(r;\lambda)$ be a solution to \eqref{eq:gov2dradpde}, where $u(0) = \alpha$. 
 \begin{enumerate}
  \item[(a)] If  $\lambda\leq (4+3 \sqrt{2})/{\eps^2}$,  then 
 \be\label{eq:fp_alpha_bd_1}
   \alpha \geq -1 + \frac{\left(3-2\sqrt{2}\right)}{2}\eps^2 \lambda > -1.
\ee
  \item[(b)] If $\lambda >  (4+3 \sqrt{2})/{\eps^2}$, then 
\be\label{eq:fp_alpha_bd_2}
    \alpha \geq  \frac{\eps \sqrt{\lambda(\eps^2 \lambda-8)}-\eps^2 \lambda-4}{4 (\eps^2 \lambda+1)} >-1.
\ee
 \end{enumerate}
\end{thm}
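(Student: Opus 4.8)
The plan is to prove Theorem \ref{thm:fp_mainthm} by contraposition, turning the boundary value problem \eqref{eq:gov2dradpde} into the initial value problem \eqref{eq:gov2dradpde_cov_ivp_ode} so that Lemma \ref{lem:fp_main} applies directly. Suppose a solution $u(r;\lambda)$ of \eqref{eq:gov2dradpde} exists with $u(0) = \alpha$ violating the claimed bound, i.e.\ with $\alpha$ satisfying one of the inequalities in \eqref{eq:fp_u0_lambda_bound} — the dichotomy $\lambda \le (4+3\sqrt2)/\eps^2$ versus $\lambda > (4+3\sqrt2)/\eps^2$ appearing in the theorem being exactly the one there. Rescaling by $\rho = r/\eps$ makes $u$ a classical solution of \eqref{eq:gov2dradpde_cov} on $[0,\eps^{-1}]$, with $|u'|<\infty$ on the interior, $u'(0)=0$, $u(\eps^{-1})=0$ and $-1<u<0$ on $(0,\eps^{-1})$. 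Since the equation in \eqref{eq:gov2dradpde_cov_ode} coincides with that of \eqref{eq:gov2dradpde_cov_ivp_ode} wherever $|u'|<\infty$, and the conditions $u'(0)=0$, $u(0)=\alpha$ are common to both, this rescaled $u$ is in particular a solution of the initial value problem \eqref{eq:gov2dradpde_cov_ivp_ode} on $[0,\eps^{-1}]$.

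Next I would feed this solution into Lemma \ref{lem:fp_main}. Because $\alpha$ lies in the range \eqref{eq:fp_u0_lambda_bound}, the lemma produces a finite $\rho_1$, bounded above by $2(1+M)^2/(\eps^2\lambda)$ as in \eqref{eq:fp_bound_rho1} with $M$ given by \eqref{eq:fp_def_M}, past which $u$ cannot be continued as a graph because its slope turns vertical there, and with $u_1 \ce u(\rho_1)$ obeying \eqref{eq:fp_bound_u1}; in particular $u_1 < M < 0$. Since the rescaled $u$ is classical on $[0,\eps^{-1}]$ — so $|u'|$ is finite on $(0,\eps^{-1})$ while $|u'|\to\infty$ as $\rho\to\rho_1^-$ — this forces $\rho_1 \ge \eps^{-1}$. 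But $u$ is strictly increasing on $[0,\rho_1)$ by Lemma \ref{lem:fp_ivp_increasing} and is continuous at $\rho_1$ with value $u_1$, so $0 = u(\eps^{-1}) \le u_1 < 0$, a contradiction. Hence no solution of \eqref{eq:gov2dradpde} can have $u(0)$ in the range \eqref{eq:fp_u0_lambda_bound}, which is exactly the contrapositive of the theorem: $\alpha$ must satisfy \eqref{eq:fp_alpha_bd_1} when $\lambda \le (4+3\sqrt2)/\eps^2$ and \eqref{eq:fp_alpha_bd_2} when $\lambda > (4+3\sqrt2)/\eps^2$. The remaining strict inequality $\alpha > -1$ is routine: in case (a), $-1 + \tfrac{(3-2\sqrt2)}{2}\eps^2\lambda > -1$ since $\lambda>0$; in case (b), the right side of \eqref{eq:fp_alpha_bd_2} exceeds $-1$ because that amounts to $\eps\sqrt{\lambda(\eps^2\lambda-8)} > -3\eps^2\lambda$, which holds trivially, the left side being nonnegative as $\eps^2\lambda > 8$ in that regime.

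I do not expect a serious obstacle at this stage: the theorem is essentially the contrapositive of Lemma \ref{lem:fp_main}, whose proof — the spherical and cylindrical comparison surfaces, the comparison principle of Lemma \ref{lem:fp_comp}, and the algebra producing \eqref{eq:fp_def_M} — has already done the heavy lifting. The only points that need care are bookkeeping ones: confirming that a classical solution of the boundary value problem is genuinely a solution of the initial value problem on the whole physical interval, keeping the two parameter regimes straight, and ruling out the degenerate possibility that the vertical-slope location $\rho_1$ is hidden at or beyond $\rho = \eps^{-1}$ — which is settled by the monotonicity of $u$ together with the sign information $u_1 < 0$ supplied by \eqref{eq:fp_bound_u1}.
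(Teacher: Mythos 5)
Your proposal is correct and follows essentially the same route as the paper: both argue by contradiction, note that the rescaled boundary-value solution is a solution of the initial value problem \eqref{eq:gov2dradpde_cov_ivp_ode}, invoke Lemma \ref{lem:fp_main} to produce the vertical-slope point $(\rho_1,u_1)$ with $u_1<0$, and use the monotonicity from Lemma \ref{lem:fp_ivp_increasing} to contradict $u(\eps^{-1})=0$. Your explicit handling of the two locations of $\rho_1$ relative to $\eps^{-1}$ and the check that both lower bounds exceed $-1$ are minor elaborations of what the paper leaves implicit.
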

\begin{proof}
 We first prove part (a). For contradiction assume that 
\[
   \alpha < -1 + \frac{\left(3-2\sqrt{2}\right)}{2}\eps^2 \lambda.
\]
Now, since $u(r)$ is a solution of \eqref{eq:gov2dradpde}, then $u(\rho)$, where $\rho = r/\eps$, is a solution of \eqref{eq:gov2dradpde_cov}, which in turn is a solution of \eqref{eq:gov2dradpde_cov_ivp_ode} on an initial interval; then by the previous lemma, we have that $u$ can only be continued to $(\rho_1,u(\rho_1))$, where $u(\rho_1) <0$. Therefore, since $u$ is increasing on $(0,\rho_1)$, we obtain $u(\rho) < u(\rho_1) < 0$, which is a contradiction, because $u \ne 0$ at $\rho = \eps^{-1}$. Therefore, our assumption must be wrong, which implies that \eqref{eq:fp_alpha_bd_1} is true.

The proof of part (b) follows similarly, except \eqref{eq:fp_alpha_bd_2} is negated instead of \eqref{eq:fp_alpha_bd_1}.
\end{proof}

An immediate corollary to this theorem is the following.
\begin{coro}\label{coro:fp_mc} 
  Let $\eps >0$ be fixed and $u(r;\lambda)$ be a solution of \eqref{eq:gov2dradpde}. Then there exists an $\alpha_*(\eps,\lambda) >-1$ such that if $u$ is a solution of \eqref{eq:gov2dradpde}, then $u(0) > \alpha_*$.
\end{coro}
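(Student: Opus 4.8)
The plan is straightforward: Corollary \ref{coro:fp_mc} is essentially a repackaging of Theorem \ref{thm:fp_mainthm}, so the proof will simply produce the asserted $\alpha_*(\eps,\lambda)$ as an explicit function of $\eps$ and $\lambda$ by splitting into the two regimes of the theorem. First I would fix $\eps>0$ and $\lambda>0$ and define
\[
 \alpha_*(\eps,\lambda) \ce
 \begin{cases}
  -1 + \dfrac{3-2\sqrt{2}}{2}\,\eps^2\lambda, & 0<\lambda \le \dfrac{4+3\sqrt{2}}{\eps^2},\\[2ex]
  \dfrac{\eps\sqrt{\lambda(\eps^2\lambda-8)}-\eps^2\lambda-4}{4(\eps^2\lambda+1)}, & \lambda > \dfrac{4+3\sqrt{2}}{\eps^2}.
 \end{cases}
\]
Then I would invoke Theorem \ref{thm:fp_mainthm}(a) in the first case and Theorem \ref{thm:fp_mainthm}(b) in the second to conclude that any solution $u$ of \eqref{eq:gov2dradpde} satisfies $u(0) \ge \alpha_*$. (A minor cosmetic point: the corollary states the strict inequality $u(0) > \alpha_*$ whereas the theorem gives $\ge$; one resolves this either by noting that equality in the theorem's bounds corresponds exactly to the threshold case where Lemma \ref{lem:fp_main}'s hypothesis \eqref{eq:fp_u0_lambda_bound} becomes an equality and the blow-up argument still forces a contradiction, or more simply by defining $\alpha_*$ to be any value slightly below the threshold, e.g. the midpoint between $-1$ and the theorem's bound, which still lies in $(-1,0)$ and still exceeds $-1$.)

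The only thing that requires a sentence of justification is that $\alpha_*(\eps,\lambda) > -1$ in \emph{both} regimes. In the first regime this is immediate since $\eps^2\lambda>0$ and $3-2\sqrt2>0$, so $\alpha_* > -1$; it is also already recorded in \eqref{eq:fp_alpha_bd_1}. In the second regime, the strict inequality $\alpha_* > -1$ is part of the chain \eqref{eq:fp_u0_lambda_bound_b} and \eqref{eq:fp_alpha_bd_2} established inside Lemma \ref{lem:fp_main} and Theorem \ref{thm:fp_mainthm}, so it may simply be cited; if one wants it self-contained, one clears the denominator $4(\eps^2\lambda+1)>0$ and checks $\eps\sqrt{\lambda(\eps^2\lambda-8)} > 3\eps^2\lambda - (\text{lower order})$, which holds for $\lambda > (4+3\sqrt2)/\eps^2$ by squaring (both sides positive in that range). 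One should also note the discriminant $\lambda(\eps^2\lambda-8)$ under the square root is nonnegative precisely when $\lambda \ge 8/\eps^2$, and $8/\eps^2 < (4+3\sqrt2)/\eps^2$, so $\alpha_*$ is well-defined (real) on the entire second regime.

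There is no real obstacle here — the corollary is a direct consequence of the theorem, and the work (the blow-up analysis of the initial value problem, the comparison-surface construction) has all been done in Lemmas \ref{lem:fp_ivp_increasing}--\ref{lem:fp_main} and Theorem \ref{thm:fp_mainthm}. The one point to be careful about is bookkeeping at the boundary $\lambda = (4+3\sqrt2)/\eps^2$ between the two cases: one should check that the two formulas for $\alpha_*$ agree (or at least that both bounds are valid) at that value, so that $\alpha_*$ is defined consistently and the stated conclusion $u(0) > \alpha_*$ holds uniformly in $\lambda$. I would phrase the final proof in three short sentences: define $\alpha_*$ by the displayed piecewise formula, apply Theorem \ref{thm:fp_mainthm}, and remark that $\alpha_* > -1$ in either case.
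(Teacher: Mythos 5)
Your proposal is correct and matches the paper, which states this corollary without proof as an immediate consequence of Theorem \ref{thm:fp_mainthm}: one simply takes $\alpha_*$ to be (anything strictly below) the explicit piecewise lower bound furnished by the theorem, whose positivity relative to $-1$ is already recorded in \eqref{eq:fp_alpha_bd_1} and \eqref{eq:fp_alpha_bd_2}. The only quibble is the sign in your optional self-contained check of the second regime \textemdash{} clearing the denominator in $\alpha_*>-1$ reduces to $\eps\sqrt{\lambda(\eps^2\lambda-8)} > -3\eps^2\lambda$, which is trivially true \textemdash{} but since you primarily cite the theorem's displayed inequality, this does not affect the argument.
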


An illustration of Theorem \ref{thm:fp_mainthm} is shown in Figure \ref{fig:fp_mainthm}.

\begin{figure}[h]
\centering
\subfigure[$\eps=0.5$]{\includegraphics[width=0.425\textwidth]{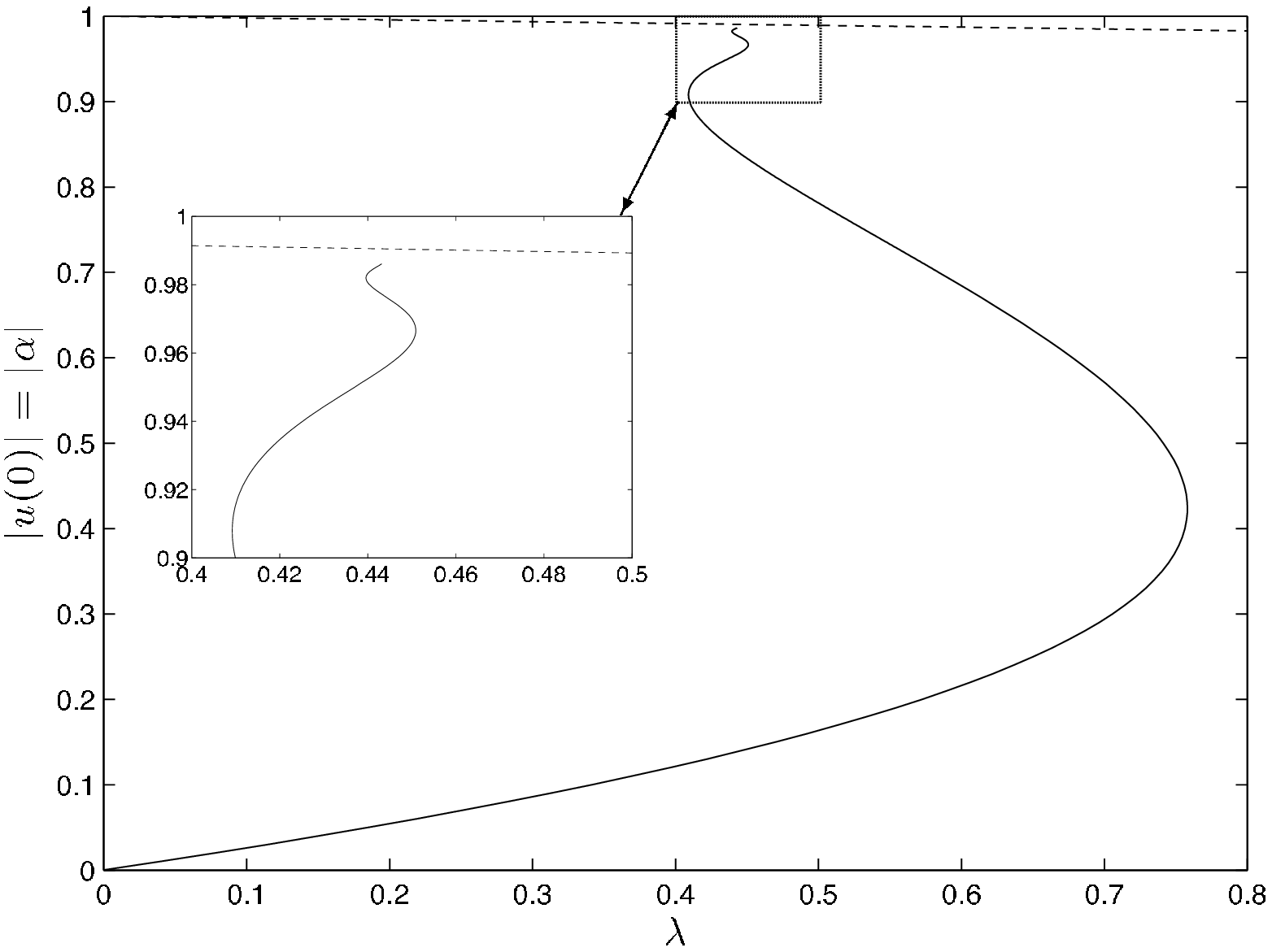}\label{fig:fp_thmfig1}}\qquad
\subfigure[$\eps=1$]{\includegraphics[width=0.425\textwidth]{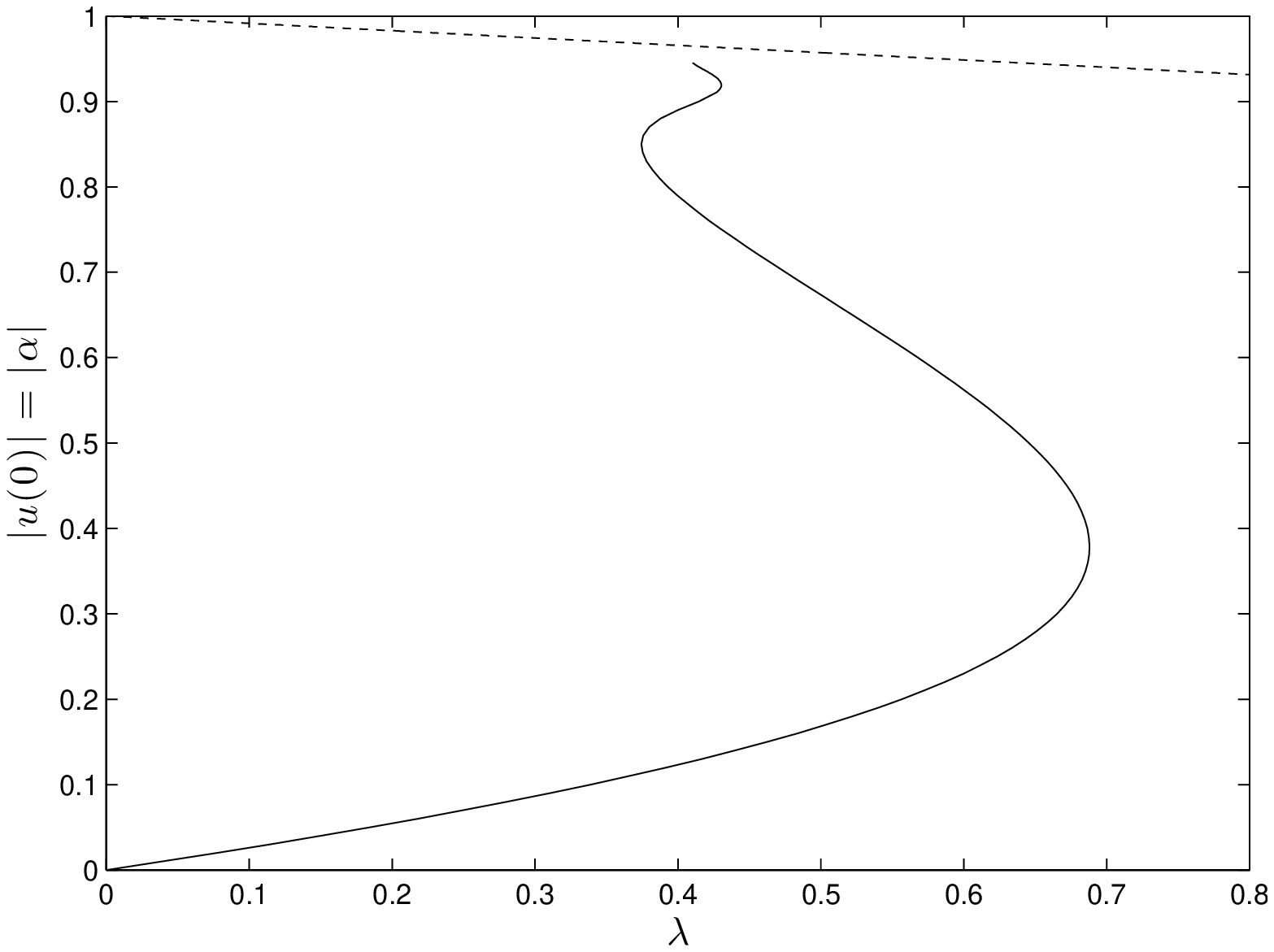}\label{fig:fp_thmfig2}}
\caption{An illustration of Theorem \ref{thm:fp_mainthm}.  On or below the dash line is the region, given in Theorem \ref{thm:fp_mainthm}, where $|u(0)|$ must be if $u$ is a solution of \eqref{eq:gov2dradpde}. As seen, this region keeps the bifurcation diagram bound away from $|u(0)|=-1$. \subref{fig:fp_thmfig1} For $\eps=0.5$; \subref{fig:fp_thmfig2} For $\eps =1$.\label{fig:fp_mainthm}}
\end{figure}

\subsection{Asymptotic analysis}\label{subsec:p_asym_analysis}

In this section, we use a similar analysis as in section \ref{sec:onedim} to analyze the upper solution branch of \eqref{eq:stdpde} in the two-dimensional unit ball for $\eps \ll 1$. To this end, we consider the equation
\begin{equation}\tag{\ref{eq:gov2dradpde}}
\frac{1}{r} \left( \frac{r u' }{\sqrt{1+\eps^2 u'^2}}\right)' = \frac{\lambda}{(1+u)^2}, \quad 0<r<1; \qquad u'(0) = u(1) = 0,
\end{equation}
in the limits $\eps \to 0^+$ and $\delta \to 0^+$, where $\delta \colonequals 1 + u(0)$. In particular, the analysis will reveal that these two small parameters must be related together in order to facilitate the matching. In these limits, \eqref{eq:gov2dradpde} is a singular perturbation problem with an inner layer at $r =0$; therefore, in the outer region away from $r=0$, we expand $u$ and $\lambda$ as
\be
u = u_0 + \eps^2 u_1 + \mathcal{O}(\eps^4), \qquad \lambda = \lambda_0 + \eps^2 \lambda_1 + \mathcal{O}(\eps^4),
\ee
and gather terms of similar order to find 
\bsub\label{main_outer}
\begin{align}
\label{main_outer_a} \Delta u_0 &= \frac{\lambda_0}{(1+u_0)^2}, \qquad u_0(0) = -1, \quad u_0(1) = 0,\\[5pt]
\label{main_outer_b} \Delta u_1 + \frac{2\lambda_0}{(1+u_0)^3} u_1 &= \frac{\lambda_1}{(1+u_0)^2} +\frac{3\lambda_0u_0'^2}{2(1+u_0)^2} -\frac{u_0'^3}{r},  \qquad u_1(1) = 0,
\end{align}
\esub
where $\Delta \ce \partial_{rr} + r^{-1}\partial_r$ denotes the two-dimensional radial Laplacian. The general solution of \eqref{main_outer} is
\begin{equation}\label{main_outer_solutions_a}
u_0 = -1 + r^{2/3}, \quad \lambda_0 = \frac{4}{9}; \qquad u_1 = \frac{\lambda_1}{3\lambda_0} r^{2/3} + A\sin(\omega\log r + \phi) 
\end{equation}
for constants $A$, $\phi${\emdash}which will be determined by matching{\emdash}and $\omega \colonequals (2 \sqrt{2})/3$. Note that $u_0'$ is not finite at $r=0$, so the condition $u'(0)=0$ will need to be enforced in a boundary layer centered around $r=0$. The value of $\lambda_1$ will eventually be fixed by the boundary condition $u_1(1) = 0$.

 Next we analyze the boundary layer near $r=0$ by introducing the inner variables 
\[
\rho = r/\gamma, \qquad  u = -1 + \delta w(\rho)
\]
where $\gamma\ll1$ is the scale of the boundary layer. Substituting these equations into \eqref{eq:gov2dradpde} gives the following equation for $w(\rho)$:
\[
  \frac{1}{\rho}\left( \frac{\rho w'}{\sqrt{1+ \eps^2\delta^2\gamma^{-2} (w')^2}}\right)' = \frac{\gamma^2}{\delta^3} \frac{\lambda}{w^2},  \quad 0< \rho < \infty; \qquad w(0) = 1, \ w'(0) = 0.
\]
A dominant balance requires that 
\begin{equation}\nonumber
\gamma = \delta^{3/2}, \qquad \frac{\eps^2}{\delta} = \delta_0
\end{equation}
where $\delta_0$ is an $\mathcal{O}(1)$ constant. We next expand $w$ as $
 w = w_0 + \littleoh(1)$, for $\delta \to 0^+$, and find that the ordinary differential equation  for $w_0(\rho)$ is 
\begin{equation}\label{mems_inner_reduced}
 \frac{1}{\rho}\left( \frac{\rho w_0'}{\sqrt{1+ \delta_0 (w_0')^2}}\right)' = \frac{\lambda_0}{w_0^2},  \quad 0< \rho < \infty; \qquad w_0(0) = 1, \ w_0'(0) = 0.
\end{equation}
The matching condition, from \eqref{main_outer_solutions_a}, provides the leading order far field behavior for $w_0$: $w_0\sim \rho^{2/3}$ as $\rho\to\infty$. To find the next order correction, we look for perturbations about this leading order form; specifically, we let $w_0 = \rho^{2/3} + v(\rho) + \ldots $  as  $\rho \to \infty$, where $v \ll \rho^{2/3}$, and retain all the linear terms to obtain the ordinary differential equation,
\[
\Delta v + \frac{2\lambda_0}{\rho^2} v +\frac{2\delta_0}{\rho^{4/3}} v' = 0,
\]
which via WKB analysis has the far field behavior 
\begin{equation}\nonumber
v \sim \til{A}(\delta_0) \sin( \omega\log\rho + \til{\phi} (\delta_0)) , \qquad \mbox{as } \rho\to\infty.
\end{equation}
Therefore $w_0 \sim \rho^{2/3} + v + \cdots$ as $\rho\to\infty$, which augments \eqref{mems_inner_reduced} to give the full specification of $w_0$ as
\bsub\label{mems_inner_full}
\begin{align}
\label{mems_inner_full_a} &\frac{1}{\rho}\left( \frac{\rho w_0'}{\sqrt{1+ \delta_0 (w_0')^2}}\right)' = \frac{\lambda_0}{w_0^2},  \quad 0< \rho < \infty; \qquad w_0(0) = 1,\ w_0'(0) = 0,\\[5pt]
\label{mems_inner_full_b} & w_0 \sim \rho^{2/3} + \til{A}(\delta_0) \sin ( \omega\log\rho + \til{\phi} (\delta_0) ) , \qquad \mbox{as} \qquad \rho\to\infty.
\end{align}
\esub
 
 To carry out matching, we introduce the intermediate variable $r_\eta = r/\eta(\eps)$, where $\eps^3 \ll \eta \ll 1$ as $\eps \to 0^+$, and the corresponding order $\bigoh(\eps^2)$ condition
 \be\label{eq:matchingcondition}
  \lim_{\substack{\eps\to 0^+ \\ r_\eta \ \mathrm{fixed}}} \frac{1}{\eps^2}\left(u_0(\eta r_\eta) + \eps^2 u_1(\eta r_\eta) + 1 -  \frac{\eps^2}{\delta_0} w_0(\delta_0^{3/2} \eta r_\eta/\eps^3) ) \right)= 0.
 \ee
From \eqref{main_outer_solutions_a} and \eqref{mems_inner_full} we have 
 \[
  \begin{alignedat}{1}
   \frac{1}{\eps^2} u_0(\eta r_\eta)  &= -\frac{1}{\eps^2} + \frac{\eta}{\eps^2} r_\eta^{2/3} \\
    u_1(\eta r_\eta) &= \frac{\lambda_1}{3\lambda_0} \eta^{2/3} r_\eta^{2/3} + A\sin(\omega\log \eta  r_\eta  + \phi) \\ 
   \frac{1}{\delta_0}w_0\left(\frac{\delta_0^{3/2} \eta r_\eta}{\eps^3}\right) & =  \frac{\eta^{2/3}}{\eps^{2}} r_\eta^{2/3} + \frac{\til{A}(\delta_0)}{\delta_0}\sin ( \omega\log \eta r_\eta +   \omega\log  \frac{\delta_0^{3/2}}{ \eps^3} + \til{\phi} (\delta_0) )  + \littleoh(1)
  \end{alignedat}
 \]
as $\eps\to 0^+$, where $r_\eta$ is fixed, and \eqref{eq:matchingcondition} yields
\[
A = \frac{\til{A}(\delta_0)}{\delta_0}, \qquad \phi = \omega\log  \frac{\delta_0^{3/2}}{ \eps^3} + \til{\phi} (\delta_0).
\]
Finally applying the boundary condition $u_1(1) = 0$ in \eqref{main_outer_solutions_a} gives
\[
  \lambda_1  =- 3\lambda_0 \frac{\til{A}(\delta_0)}{\delta_0} \sin( \omega\log  \delta_0^{3/2}\eps^{-3} + \til{\phi} (\delta_0)) 
\]
and hence, 
\[
\begin{aligned}
\lambda & = \lambda_0 + \eps^2 \lambda_1 + \bigoh(\eps^4) = \lambda_0 - \delta 3 \lambda_0 \til{A}(\delta_0) \sin( -\sqrt{2}\log \delta + \til{\phi} (\delta_0)).
\end{aligned}
\]
At this stage, we may fix the value of $\eps$ in the main equation \eqref{eq:gov2dradpde} and write $\delta_0 = \eps^2/\delta$ with $\eps^2$ fixed but still $\bigoh(\delta)$. This leads to the following asymptotic result regarding the upper solution branch of the bifurcation curve for \eqref{eq:gov2dradpde}. 

\begin{pr}\label{pr:2drad_np}
 For solutions of \eqref{eq:gov2dradpde}, there is a regime where both $\eps \ll 1$ and $\delta \ll 1$, with $\eps^2/\delta = \mathcal{O}(1)$, such that the upper solution branch of the bifurcation curve has the asymptotic parameterization
\bsub\label{PR1}
\begin{equation}\label{PR1_a}
|u(0)|= 1- \delta, \qquad \lambda = \frac{4}{9} - \delta  \frac{4}{3} \til{A}\left(\frac{\eps^2}{\delta}\right) \sin\left[ -\sqrt{2}\log \delta + \til{\phi} \left(\frac{\eps^2}{\delta}\right)\right] + \mathcal{O}(\delta^2).
\end{equation}
where $\til{A}(\delta_0)$ and $\til{\phi}(\delta_0)$ are functions determined by the initial value problem
\begin{align}
\label{PR1_b} &\frac{1}{\rho}\left( \frac{\rho w_0'}{\sqrt{1+ \delta_0 (w_0')^2}}\right)' = \frac{4}{9}w_0^{-2},  \quad 0< \rho < \infty; \qquad w_0(0) = 1,\ w_0'(0) = 0\\
\label{PR1_c} &w_0 = \rho^{2/3} + \til{A}(\delta_0)\sin\left(\frac{2\sqrt{2}}{3}\log\rho + \til{\phi}(\delta_0)\right) + \littleoh(1) \qquad \rho\to\infty.
\end{align}
\esub
\end{pr}

The asymptotic parameterization \eqref{PR1_a} of the upper solution branch of \eqref{eq:gov2dradpde} appears outwardly to be defined for $|u(0)|$ arbitrarily close to $1$, potentially contradicting Corollary \ref{coro:fp_mc}. However, the parameterization assumes that the quantities $\til{A}(\eps^2/\delta)$ and $\til{\phi}(\eps^2/\delta)$ are well defined as $\delta\to0^{+}$ and so one can expect that $\til{A}(\delta_0)$ and $\til{\phi}(\delta_0)$ will not be defined for $\delta_0$ sufficiently large. Therefore before observing the predictive accuracy of \eqref{PR1}, let us first consider the existence of solutions to \eqref{PR1_b}, for $\delta_0$ sufficiently large.

To do so, we follow a similar procedure outline in previous section and introduce a change of variables{\emdash}specifically, $\xi = \rho/\sqrt{\delta_0}$, with $w_0(\rho) = v(\xi)${\emdash}so that \eqref{PR1_b} becomes
\begin{align}\label{eq:ip_ivp_rescaled}
 &\frac{1}{ \xi}\left( \frac{ \xi v'}{\sqrt{1+(v')^2}}\right)' = \frac{4\delta_0}{9}  v^{-2},  \quad 0< \xi < \infty; \qquad v(0) = 1,\ v'(0) = 0.
\end{align}
Hence, the mean curvature operator is isolated on the left-hand side, and \eqref{eq:ip_ivp_rescaled} yields the geometric representation  
\bsub\label{eq:ip_ivp_angle}
\be\label{eq:ip_ivp_angle_ode}
 \frac{\left(\xi \sin\psi\right)'}{\xi}= \frac{4\delta_0}{9}  v^{-2}
\ee
where $\psi$ is the angle of inclination of the solution curve. Noting that \eqref{eq:ip_ivp_rescaled} and \eqref{eq:ip_ivp_angle_ode} are equivalent on any interval in which $v'(\xi)$ is bounded, we look at \eqref{eq:ip_ivp_angle_ode}, coupled with the initial condition 
\be\label{eq:ip_ivp_angle_ic}
 v(0)=1,
\ee
\esub
to study the nonexistence of solutions of \eqref{eq:ip_ivp_rescaled}. Also note that if $v$ satisfies \eqref{eq:ip_ivp_angle}, then the condition $v'(0)=0$ is redundant, which can be seen by integrating \eqref{eq:ip_ivp_angle_ode} and then taking $\xi \to 0^+$. 

\begin{lem}\label{lem:ip_increasing_v}
 If $v$ satisfies \eqref{eq:ip_ivp_angle} on $[0,a)$, then $\sin \psi > 0$ for $ \xi \in (0,a)$, which implies that $v$ is increasing on that interval. Furthermore, we have the following bound:
\be\label{eq:ip_increasing_v1}
 \frac{2\delta_0}{9v(\xi)^2}< \frac{\sin\psi(\xi)}{\xi} < \frac{2\delta_0}{9}.
\ee
\end{lem}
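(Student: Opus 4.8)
The plan is to follow, essentially verbatim, the argument used for Lemma~\ref{lem:fp_ivp_increasing}, since \eqref{eq:ip_ivp_angle_ode} is structurally identical to \eqref{eq:gov2dradpde_cov_ivp_ode} with $\eps^2\lambda$ replaced by $4\delta_0/9$ and the initial height $\alpha$ replaced by the value $v(0)=1$. First I would integrate the differential equation in \eqref{eq:ip_ivp_angle_ode}: multiplying through by $\xi$ and integrating from $0$ to $\xi$ (which is legitimate because, by hypothesis, $v$ solves \eqref{eq:ip_ivp_angle} on $[0,a)$, so $\xi\sin\psi$ is absolutely continuous there and vanishes at $\xi=0$) yields
\[
 \sin\psi(\xi) = \frac{4\delta_0}{9\xi}\int_0^\xi \frac{s}{v(s)^2}\ud s, \qquad \xi\in(0,a).
\]
Since the integrand is strictly positive, this immediately gives $\sin\psi(\xi)>0$ on $(0,a)$; and because \eqref{eq:ip_ivp_rescaled} and \eqref{eq:ip_ivp_angle_ode} are equivalent on intervals where $v'$ is bounded — so that $\sin\psi = v'/\sqrt{1+(v')^2}$ there — this is exactly the statement that $v'(\xi)>0$, i.e. $v$ is strictly increasing on $(0,a)$.

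Next I would use this monotonicity to produce the two-sided bound \eqref{eq:ip_increasing_v1}. For $s\in(0,\xi)$, monotonicity together with the initial condition $v(0)=1$ gives $1 = v(0) < v(s) < v(\xi)$, hence $v(\xi)^{-2} < v(s)^{-2} < 1$. Substituting these inequalities into the integral representation of $\sin\psi$ above and using $\int_0^\xi s\ud s = \xi^2/2$ gives
\[
 \frac{2\delta_0}{9 v(\xi)^2} \;=\; \frac{4\delta_0}{9\xi^2}\cdot\frac{\xi^2}{2 v(\xi)^2} \;<\; \frac{\sin\psi(\xi)}{\xi} \;<\; \frac{4\delta_0}{9\xi^2}\cdot\frac{\xi^2}{2} \;=\; \frac{2\delta_0}{9},
\]
which is precisely \eqref{eq:ip_increasing_v1}.

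I do not anticipate any serious obstacle: the argument is a direct transcription of the proof of Lemma~\ref{lem:fp_ivp_increasing}. The only points deserving a word of care are (i) justifying the integrated form of \eqref{eq:ip_ivp_angle_ode} near $\xi = 0$ (handled by the absolute continuity of $\xi\sin\psi$ noted above, which also recovers the redundant condition $v'(0)=0$), and (ii) recording that the equivalence $\sin\psi>0 \iff v'>0$ is valid only where $v'$ is finite, which is exactly the interval under consideration. Both mirror steps already made in \S\ref{subsection:nc}, so the write-up should be brief.
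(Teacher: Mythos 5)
Your proposal is correct and follows essentially the same route as the paper: the paper's proof also integrates \eqref{eq:ip_ivp_angle_ode} to obtain $\sin\psi(\xi)=\frac{4\delta_0}{9\xi}\int_0^\xi \eta\, v(\eta)^{-2}\ud\eta>0$ and then invokes the monotonicity argument of Lemma \ref{lem:fp_ivp_increasing} to get the two-sided bound. Your write-up merely makes explicit the details the paper delegates to that earlier lemma, and the care you note about the equivalence of \eqref{eq:ip_ivp_rescaled} and \eqref{eq:ip_ivp_angle_ode} matches the paper's remarks.
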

\begin{proof}
 Integrating \eqref{eq:ip_ivp_angle_ode} yields
\[
 \sin\psi(\xi)= \frac{4\delta_0}{9}  \frac{1}{\xi}\int_0^\xi \eta \  v(\eta)^{-2} \ud\eta > 0.
\]
for all $\xi \in (0,a)$, and the results follow as in Lemma \ref{lem:fp_ivp_increasing}.
\end{proof}

Now, we can prove the main lemma which leads to our desired main result for the nonexistence of solutions of \eqref{PR1_b}.

\begin{lem}\label{lem:ip_main}
 Assume that $v$ is a solution of \eqref{eq:ip_ivp_angle_ode} on an initial interval with $v(0)=1$. If
 \be\label{eq:delta0_bound}
  \delta_0 \geq \bar{\delta}_0 \ce \frac{9(2\sqrt{2}+3)}{2},
 \ee
 then there exists a value $\xi_1$ in which $v(\xi)$ cannot be continued beyond as a solution of \eqref{eq:ip_ivp_angle}. Furthermore, at this point $(\xi_1,v_1)$, where $v_1 \ce v(\xi_1)$, the slope of the solution curve is vertical and the following bounds hold:
\begin{align}
  \frac{9}{2\delta_0 } <\xi_1 & \leq \frac{9}{2 \delta_0} \left(\frac{6 \delta_0 -9  - \sqrt{4 \delta_0^2  - 108 \delta_0 + 81}}{4 \delta_0}\right)^2, \\
  1 + \frac{9}{2\delta_0} < v_1 &< \frac{6 \delta_0 -9  - \sqrt{4 \delta_0^2  - 108 \delta_0 + 81}}{4 \delta_0}.
\end{align}
\end{lem}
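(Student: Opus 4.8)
This statement is the counterpart, for the rescaled inner initial value problem \eqref{eq:ip_ivp_angle}, of Lemma \ref{lem:fp_main}: formally it is the specialization of that lemma in which the role of $1+\alpha$ is played by $v(0)=1$ and that of $\eps^2\lambda$ by $4\delta_0/9$, run now without a Dirichlet condition. Since \eqref{eq:ip_ivp_angle} carries no boundary condition, one only needs a vertical tangent to occur \emph{somewhere} rather than before a prescribed value of $v$, which is why the single hypothesis \eqref{eq:delta0_bound} here replaces the case split \eqref{eq:fp_u0_lambda_bound_a}--\eqref{eq:fp_u0_lambda_bound_b}. The plan is to repeat the four-step argument of Lemma \ref{lem:fp_main}. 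The first step writes \eqref{eq:ip_ivp_angle_ode} in the split form $\sin\psi/\xi + (\sin\psi)' = 4\delta_0/(9 v^2)$; by Lemma \ref{lem:ip_increasing_v}, $v$ is strictly increasing on the initial interval, so $v$ may be used as independent variable and the curvature identity $\D{}{\xi}(\sin\psi) = -\D{}{v}(\cos\psi)$ of \eqref{eq:fp_curvature_relation} applies.

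The second step is the heart of the argument. Substituting the curvature identity together with the upper bound $\sin\psi/\xi < 2\delta_0/9$ of Lemma \ref{lem:ip_increasing_v} into the split form, and integrating in $v$ from $v=1$ (where $\psi=0$), gives
\[ 1 - \cos\psi(v) > \frac{2\delta_0}{9}\Bigl[\,2\bigl(1 - v^{-1}\bigr) - (v - 1)\,\Bigr]. \]
The bracket $h(v)\ce 2(1 - v^{-1}) - (v - 1)$ vanishes and is increasing at $v=1$, attains its maximum $3 - 2\sqrt2$ at $v=\sqrt2$, and then decreases; hence the right-hand side equals $\tfrac{2\delta_0}{9}(3 - 2\sqrt2)$ at $v=\sqrt2$, which is $\geq 1$ precisely when $\delta_0 \geq \bar{\delta}_0 = 9(2\sqrt2+3)/2$. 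Under \eqref{eq:delta0_bound}, therefore, the equation $\tfrac{2\delta_0}{9}h(v)=1$ — equivalently the quadratic $2\delta_0 v^2 + (9 - 6\delta_0) v + 4\delta_0 = 0$ — has a smallest root
\[ M_v \ce \frac{6\delta_0 - 9 - \sqrt{4\delta_0^2 - 108\delta_0 + 81}}{4\delta_0} \in (1,\sqrt2\,], \]
and at $v=M_v$ the displayed inequality would force $\cos\psi<0$, impossible along a graph (there $\cos\psi=(1+(v')^2)^{-1/2}>0$). Hence the solution cannot be continued as far as $v=M_v$: the tangent becomes vertical at some $v_1 < M_v$, at a finite value $\xi_1$, which is the asserted upper bound on $v_1$ together with the non-continuation claim.

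Steps three and four supply the remaining bounds by two circular comparison arcs, exactly as in Lemma \ref{lem:fp_main}. From $\sin\psi < 2\delta_0\xi/9 < 1$ for $\xi < 9/(2\delta_0)$ the tangent stays sub-vertical there, giving $\xi_1 > 9/(2\delta_0)$. For the upper bound take $w(\xi)\ce 1 + \beta - \sqrt{\beta^2 - \xi^2}$ with $\beta \ce 9 M_v^2/(2\delta_0)$ and $\calig{F}$ as in \eqref{eq:calig_F}; then $(\calig{F}(\xi,w'))' = 4\delta_0\xi/(9 M_v^2)$, while $(\calig{F}(\xi,v'))' = 4\delta_0\xi/(9 v^2)$ and $1 \leq v < v_1 < M_v$ on $(0,\xi_1)$ make the latter strictly larger, so the comparison principle (Lemma \ref{lem:fp_comp}, with $w_1=v$, $w_2=w$, and common data $v(0)=w(0)=1$, $v'(0)=w'(0)=0$) yields $v \geq w$ on $(0,\min\{\xi_1,\beta\})$; were $\beta < \xi_1$ this would force $v' \geq w' \to \infty$ as $\xi \to \beta^-$, contradicting the definition of $\xi_1$, whence $\xi_1 \leq \beta$. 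Finally, feeding the lower bound $\sin\psi/\xi > 2\delta_0/(9 v^2)$ of Lemma \ref{lem:ip_increasing_v} into the split form gives $(\cos\psi)_v > -2\delta_0/(9 v^2) \geq -2\delta_0/9$; comparing against the arc $\til{\xi}(v)\ce \sqrt{R^2 - (v - 1 - R)^2}$, $R \ce 9/(2\delta_0)$, whose inclination $\varphi$ satisfies $(\cos\varphi)_v \equiv -2\delta_0/9$, and noting $\cos\psi=\cos\varphi=1$ at $v=1$, we obtain $\cos\psi(v) > \cos\varphi(v) \geq 0$ for $v$ up to $1+R$, where $\varphi$ first turns vertical; hence $v_1 > 1 + 9/(2\delta_0)$.

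The calculations are routine apart from the second step, which carries the substance: one must locate the maximizer $v=\sqrt2$ and the maximal value $3 - 2\sqrt2$ of $h$, check that $\tfrac{2\delta_0}{9}(3-2\sqrt2)=1$ is exactly the threshold $\delta_0=\bar{\delta}_0$ — so that $\bar{\delta}_0$ is simultaneously the larger root (in $\delta_0$) of $4\delta_0^2 - 108\delta_0 + 81$, which guarantees $M_v$ is real throughout \eqref{eq:delta0_bound} — and verify from the quadratic, whose two roots have product $2$, that its smaller root is the stated $M_v$ and lies in $(1,\sqrt2\,]$, so that the comparison radius $\beta = 9 M_v^2/(2\delta_0)$ in step three is positive and well defined. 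The other point meriting care is the orientation in the comparison-principle step, i.e.\ which of $v$, $w$ plays the role of $w_1$ in Lemma \ref{lem:fp_comp}.
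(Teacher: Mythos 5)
Your proposal is correct and follows essentially the same four-step argument as the paper: the split form with $v$ as independent variable, integration against the bounds of Lemma \ref{lem:ip_increasing_v} to force $\cos\psi<0$ at the smaller root $M_v$ of the quadratic, and the two circular comparison arcs (via Lemma \ref{lem:fp_comp}) for the bounds on $\xi_1$ and $v_1$. You also correctly fill in the threshold computation showing $\tfrac{2\delta_0}{9}(3-2\sqrt2)\geq 1$ is equivalent to $\delta_0\geq\bar\delta_0$, and your value $(\cos\varphi)_v=-2\delta_0/9$ for the final comparison arc is the right one (the paper's $-9/(2\delta_0)$ is a typo).
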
 
\begin{proof}
By Lemma \ref{lem:ip_increasing_v}, we know that on an initial segment $v$ is increasing and consequently, may use it $v$ as an independent variable; thus, \eqref{eq:ip_ivp_angle} gives
\be\label{eq:ip_c2cos}
 \frac{\sin\psi}{\xi} - (\cos{\psi})_v =\frac{4\delta_0}{9}  v^{-2},
\ee
Then integrating with respect to $v$ and using \eqref{eq:ip_increasing_v1} yields
\[
 1- \cos{\psi(v)}>\frac{4\delta_0}{9}  \left(1- \frac{1}{v}\right) - \frac{2\delta_0}{9} (v-1),
\]
or
\[
 \frac{2\delta_0}{9} \left(v+ \left[\frac{9}{2\delta_0}-3\right]+\frac{2}{v} \right)>\cos{\psi(v)};
\]
hence if  \eqref{eq:delta0_bound} is true, then a vertical slope exists at a value $v_1$ where 
\[
 v_1 < \frac{6 \delta_0 -9  - \sqrt{4 \delta_0^2  - 108 \delta_0 + 81}}{4 \delta_0}.
\]

First from \eqref{eq:ip_increasing_v1}, we see that $v$ can be continued to at least $\xi={9}/(2\delta_0)$, which implies that $\xi_1 > 9/(2 \delta_0)$.

Next, we compare the solution to
\begin{equation}\nonumber
 W(\xi)\ce 1 + \beta -\sqrt{\beta^2 -\xi^2}, \qquad \beta \ce  \frac{9}{2\delta_0 } \left(\frac{6 \delta_0 -9  - \sqrt{4 \delta_0^2  - 108 \delta_0 + 81}}{4 \delta_0}\right)^2.
\end{equation}
Note that 
\begin{equation}\nonumber
\begin{aligned}
	(\calig{F}(\xi,W'))'  = \frac{2}{\beta} \xi  < \frac{4 \delta_0}{9}\xi \, v_1^{-2}  <  \frac{4 \delta_0}{9}\xi \, v^{-2}= (\calig{F}(\xi,v'))' 
\end{aligned}
\end{equation}
for $\xi\in(0,\min\{\xi_1,\beta\})$ and $W(0) = 1$. Therefore, by Lemma \ref{lem:fp_comp},
\begin{equation}\nonumber
  {W}(\xi) < {v}(\xi) , \qquad  W'(\xi) < v'(\xi), \qquad \xi \in(0,\min\{\xi_1,\beta\}).
\end{equation}
Now if $\beta < \xi_1$, then $W'(\xi) < v'(\xi)$, for $\xi \in(0,\beta)$, which implies that $v'(\beta) = \infty$ and yields a contradiction. Therefore, we must have $\xi_1 \leq \beta$.

To get the last bound we note that for the solution graph $(\xi,v)$ of \eqref{eq:ip_ivp_angle} $v$ may be used as the independent variable.  Hence
using  \eqref{eq:ip_increasing_v1} in \eqref{eq:ip_c2cos} gives
\be\label{eq:ip_ind_v_angle}
 (\cos{\psi})_v >- \frac{2\delta_0}{9}  v^{-2}.
\ee
Then in introducing a comparison surface
\be\label{eq:ip_ind_v_comp_surf}
 (\til{\xi}(v),v), \qquad  \til{\xi} (v) \ce \sqrt{\frac{81}{4 \delta_0^2} - \left[1 + \fr{9}{2 \delta_0} - v\right]^2} 
\ee
for $v \in(1,1+9/(2\delta_0))$ with corresponding angle of inclination $\varphi$, we obtain  $(\cos \varphi)_v = -{9}/({2 \delta_0})$; thus, from \eqref{eq:ip_ind_v_angle}
\[
 (\cos{\psi})_v > (\cos\varphi)_v
\]
for each $v \in(0, \min\{v_1, 1+9/(2\delta_0)\})$, which implies  $\cos{\psi} >\cos\varphi$ for each $v$ in that interval, and therefore, the solution $(\xi,v)$ of \eqref{eq:ip_ivp_angle} can be continued vertically until the comparison surface \eqref{eq:ip_ind_v_comp_surf} becomes vertical. That is, 
\begin{equation}\nonumber
 v_1 > w(9/(2\delta_0)) = 1+9/(2\delta_0).
\end{equation}
\end{proof}

Therefore, since \eqref{eq:ip_ivp_angle} and \eqref{eq:ip_ivp_rescaled} are equivalent on $(0,\xi_1)$, then the derivative of \eqref{eq:ip_ivp_rescaled} also blows-up in finite time, which after changing variables back to $\rho$ yields the following result concerning \eqref{PR1_b}. 
\begin{thm}
 There exists a value $\delta_0^*$ such that for $\delta_0 \geq \delta_0^*$ no solutions of \eqref{PR1_b} exist. Furthermore, 
 \begin{equation}\nonumber
 \delta_0^* \leq  \bar{\delta}_0 \ce \frac{9(2\sqrt{2}+3)}{2}\approx 26.2279.
 \end{equation}
\end{thm}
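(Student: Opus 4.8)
The plan is to transfer the finite-time blow-up result for the rescaled problem \eqref{eq:ip_ivp_rescaled}, which follows directly from Lemma \ref{lem:ip_main}, back to the original inner problem \eqref{PR1_b} and then argue by contradiction. First I would observe that \eqref{eq:ip_ivp_angle} and \eqref{eq:ip_ivp_rescaled} are equivalent on any interval where $v'$ stays bounded, so Lemma \ref{lem:ip_main} tells us that whenever $\delta_0 \geq \bar\delta_0$, a solution $v$ of \eqref{eq:ip_ivp_rescaled} starting from $v(0)=1$, $v'(0)=0$ develops a vertical tangent at some finite $\xi_1$ and cannot be continued past it as a solution of \eqref{eq:ip_ivp_rescaled} (equivalently, as a graph over $\xi$). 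Undoing the change of variables $\xi = \rho/\sqrt{\delta_0}$, $w_0(\rho) = v(\xi)$, this says precisely that the solution $w_0$ of \eqref{mems_inner_full_a} (equivalently \eqref{PR1_b}) can only be continued up to $\rho_1 = \sqrt{\delta_0}\,\xi_1 < \infty$, at which point $w_0'(\rho_1) = +\infty$.

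Next I would complete the contradiction. A solution of \eqref{PR1_b} in the sense required by Principal Result \ref{pr:2drad_np} must satisfy the far-field condition \eqref{PR1_c} — in particular $w_0$ must exist and grow like $\rho^{2/3}$ as $\rho\to\infty$, so in particular it must be a $C^1$ graph over $\rho$ on all of $(0,\infty)$. But if $\delta_0 \geq \bar\delta_0$, Lemma \ref{lem:ip_main} (after undoing the rescaling) forbids continuation past the finite point $\rho_1$ where the slope goes vertical; hence no such $w_0$ can exist. Therefore the set of $\delta_0$ for which \eqref{PR1_b} admits a solution is bounded above by $\bar\delta_0$, and we may define $\delta_0^\ast$ to be the supremum of that set (which is finite, hence $\delta_0^\ast \leq \bar\delta_0$), giving the stated bound $\delta_0^\ast \leq \tfrac{9(2\sqrt 2 + 3)}{2} \approx 26.2279$.

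I would also take care of one bookkeeping point: by Lemma \ref{lem:ip_increasing_v} the candidate solution $v$ is strictly increasing on its interval of existence, so $v$ genuinely reaches the value $v_1 < \tfrac{6\delta_0 - 9 - \sqrt{4\delta_0^2 - 108\delta_0 + 81}}{4\delta_0}$ at which the slope is vertical, and the increasing property guarantees the blow-up point is not pushed off to $\xi = \infty$; the upper bound $\xi_1 \leq \beta$ in Lemma \ref{lem:ip_main} makes this quantitative. The only mild subtlety — and the main thing to state carefully rather than a genuine obstacle — is the meaning of "no solutions of \eqref{PR1_b} exist": the inner problem \eqref{PR1_b} on its own is an initial value problem that always has a local solution, so the precise statement is that there is no solution defined on all of $(0,\infty)$ with the required far-field matching form \eqref{PR1_c}, and it is that global-existence-plus-matching requirement that fails for $\delta_0 \geq \bar\delta_0$. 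With that interpretation fixed, the proof is the short contradiction argument above.
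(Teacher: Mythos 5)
Your proposal is correct and follows essentially the same route as the paper: invoke Lemma \ref{lem:ip_main} for the rescaled geometric formulation, use the equivalence of \eqref{eq:ip_ivp_angle} and \eqref{eq:ip_ivp_rescaled} on intervals where $v'$ is bounded, and undo the change of variables $\rho=\sqrt{\delta_0}\,\xi$ to conclude that $w_0'$ blows up at a finite $\rho_1$, contradicting global existence with the far-field behavior \eqref{PR1_c}. Your explicit remark about what ``no solutions exist'' means (global existence plus matching, rather than local solvability of the IVP) is a sensible clarification of a point the paper leaves implicit, but it is not a different argument.
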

\textit{Remark.} An integration of the \eqref{PR1_b} gives $\delta^{\ast}_0 \approx 18.142468.$ indicating that $\bar{\delta}_0$ is not a particularly tight upper bound on $\delta^{\ast}_0$.

As a result of this theorem, an expansion for the dead-end point $(\lambda_*(\eps),|\alpha_*(\eps)|)$ can now be  extracted from \eqref{PR1_a}; specifically, since \eqref{PR1_b} has no solutions for $\delta_0 \geq \delta_0^*$, the asymptotic approximation \eqref{PR1_a} fails at $\eps^2/\delta  = \delta_0^* $, or $\delta = \eps^2/\delta_0^* $. Therefore, using these values in \eqref{PR1_a}, the following asymptotic result for the dead-point of \eqref{eq:gov2dradpde} is established.
\vspace{0.25in}
\begin{pr}\label{pr:2drad_de_pt}
For $\eps \ll 1$, the dead-end point of the upper solution branch of the bifurcation curve of \eqref{eq:gov2dradpde} has the asymptotic expansion 
\be\label{eq:asymdeadendpt}
\begin{aligned}
  |\alpha_*(\eps)|& = 1 -\frac{\eps^2}{\delta_0^*} + \bigoh(\eps^4),\\
 \lambda_*(\eps) &= \frac{4}{9} - \eps^2  \frac{4}{3} \frac{\til{A}\left(\delta_0^*\right)}{\delta_0^*} \sin\left[ -\sqrt{2}\log \frac{\eps^2}{\delta_0^*} + \til{\phi} \left(\delta_0^*\right)\right] + \bigoh(\eps^4).
 \end{aligned}
\ee
\end{pr}

We remark that the ability of Principal Result \ref{pr:2drad_de_pt} to predict the dead-end point associated with radially symmetric solutions of \eqref{eq:govpde} in the case $n=2$, \emph{is not matched} by the asymptotic analysis leading to Principal Result \ref{pr:1d_conc} for the $n=1$ case. This discrepancy suggests that the dead-end phenomena exhibited in both the $n=1$ and $n=2$ are not qualitatively similar.

In order to study the quantitative accuracy of Principal Result \ref{pr:2drad_de_pt}, it is necessary to obtain the functions $\til{A}(\delta_0)$ and $\til{\phi}(\delta_0)$, which are readily acquired by solving \eqref{PR1_b} numerically, then subtracting off the growth term $\rho^{2/3}$ and applying a least squares fit to the remainder (see Figure \ref{fig:A_phi}). In Figure \ref{fig:Comp}, comparisons of the full numerical solution of the upper branch of the bifurcation curve and asymptotic prediction of \eqref{PR1_a}  are displayed; furthermore, the agreement is observed to be very good.
\begin{figure}[h]
\centering
\subfigure[$\til{A}(\delta_0)$]{\includegraphics[width=0.425\textwidth]{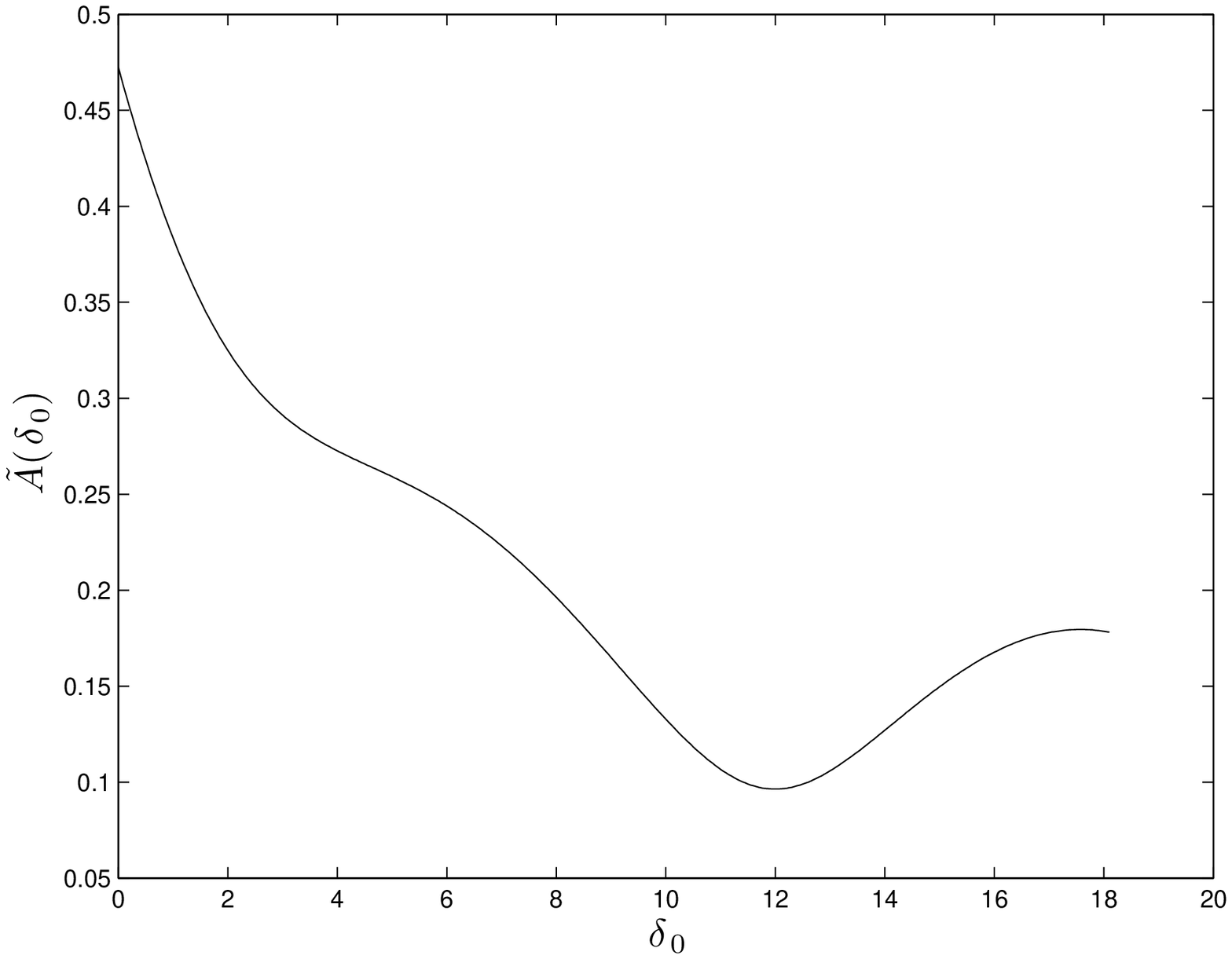}}\qquad
\subfigure[$\til{\phi}(\delta_0)$]{\includegraphics[width=0.425\textwidth]{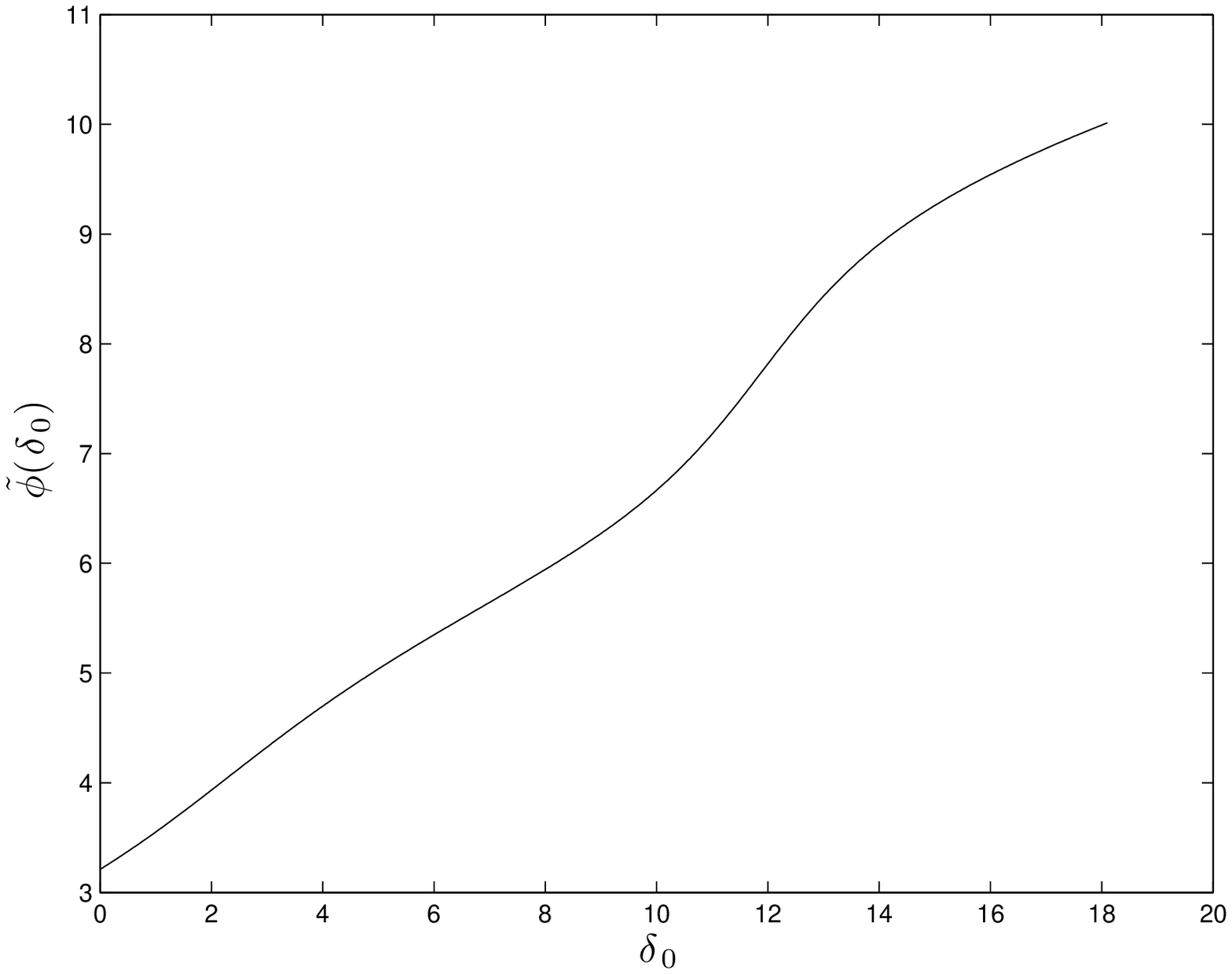}}
\caption{Graphs of $\til{A}(\delta_0)$ and $\til{\phi}(\delta_0)$ against $\delta_0$. The numerical integration fails abruptly at roughly $\delta_0 = \delta^{\ast}_0 \approx 18.142468$. \label{fig:A_phi} }
\end{figure}

\begin{figure}[H]
\centering
\subfigure[$\eps^2 = 0.025$]{\includegraphics[width=0.4125\textwidth]{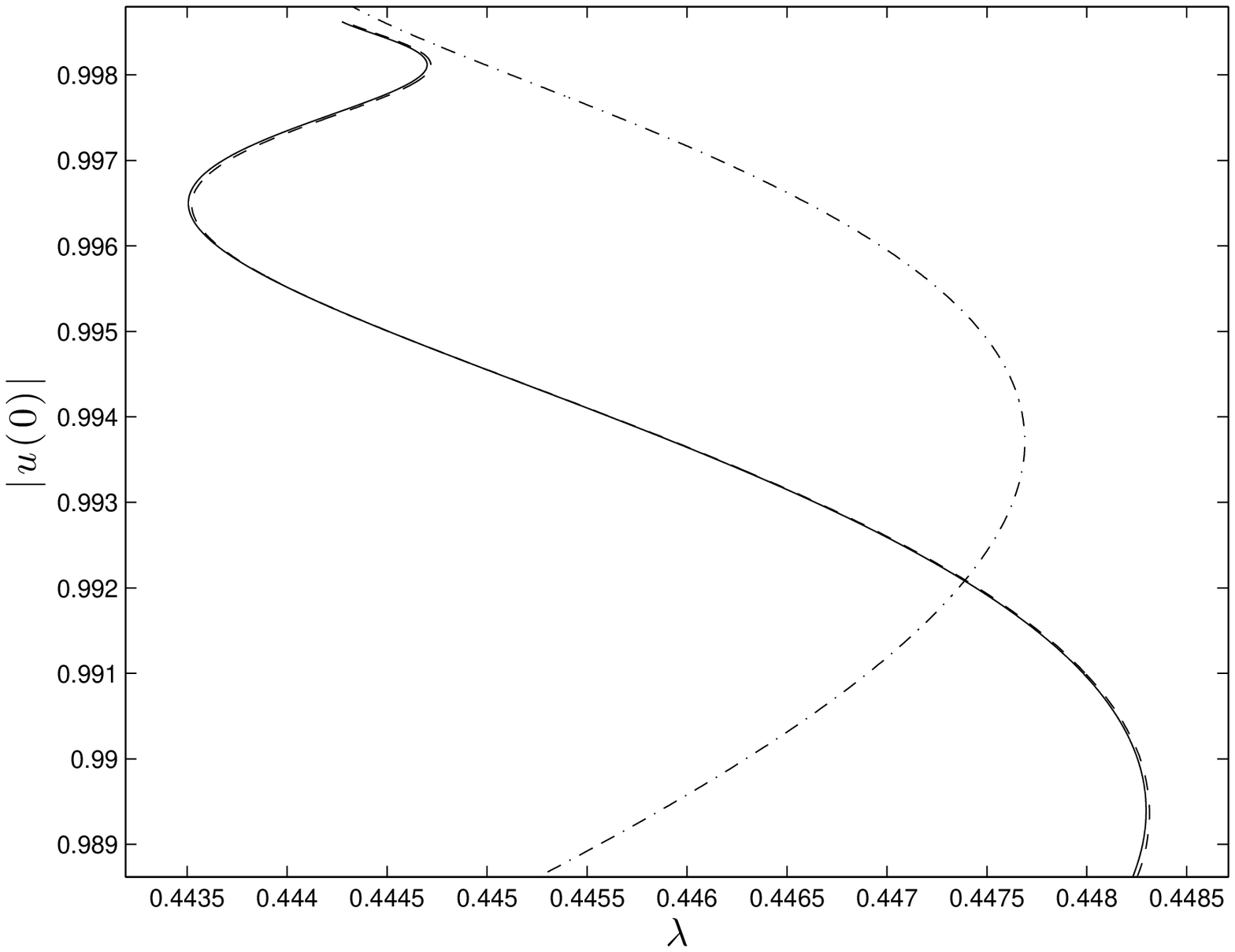}}\qquad
\subfigure[$\eps^2 = 0.1$]{\includegraphics[width=0.425\textwidth]{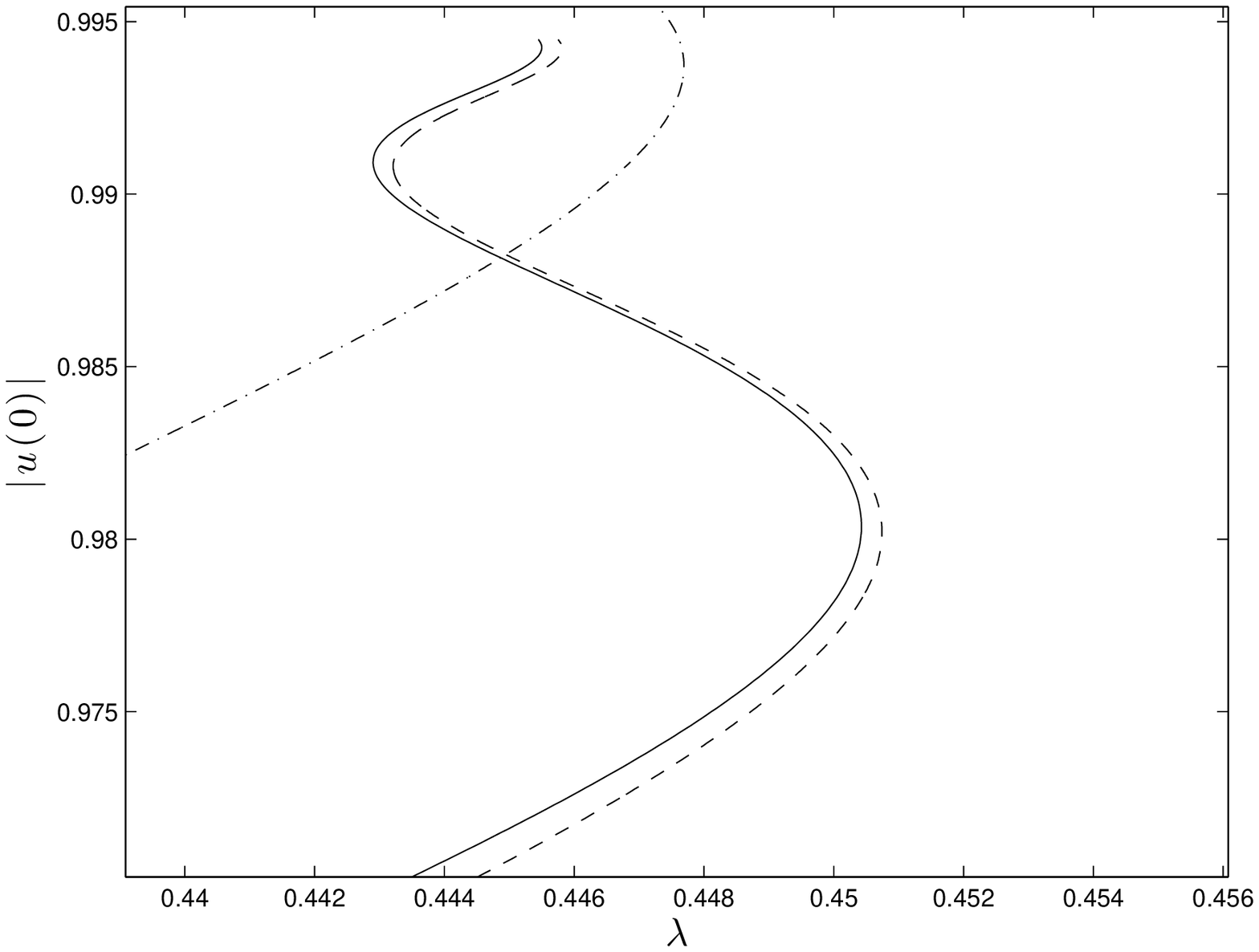}}
\caption{Comparison of the full numerical solution of the bifurcation curve for \eqref{eq:gov2dradpde} (solid) with asymptotic formula \eqref{PR1_a} (dashed) and the solution curve of \eqref{eq:gov2dradpde} for $\delta =0$ (dash dot). \label{fig:Comp} }
\end{figure} 

  In Figure \ref{fig:crit_comp}, a comparison of the numerical and asymptotic values for the location of the dead-end point is shown; note that agreement is very good, as in each case the asymptotic error is $\mathcal{O}(\eps^4)$.

In Figure \ref{fig:blowup_inner_a}, the numerical (solid) and global asymptotic (dashed) solutions of \eqref{eq:gov2dradpde} at the dead-end point for $\eps^2=0.2$ are displayed. As expected (see section \ref{subsection:nc}), the tangent of the solution curve is almost vertical, indicating that the derivative of the solution is becoming unbounded. Indeed, when solutions $w_0'(\rho;\delta_0)$ of \eqref{PR1_b} are plotted for several $\delta_0$ a blow-up in $w_0'(\rho;\delta_0)$ as $\delta_0\to\delta_0^*$ is observed (see Figure \ref{fig:blowup_inner_b}). This suggests that beyond the dead-end point solutions of \eqref{eq:gov2dradpde} cannot be represented by a function of a single variable. Therefore in the next section \eqref{eq:gov2dradpde} is re-parameterized in terms of arc length along the solution curve $s$, and consequently becomes a system of coupled ODES.  An asymptotic study of this coupled system reveals that multivalued solutions of \eqref{eq:govpde} are present beyond the dead-end point of the bifurcation diagram.

\begin{figure}[h]
\centering
\subfigure[$\eps^2/\delta_0^* = 1-|\alpha_*(\eps)|$]{\label{eq:npradial_alpha_star}\includegraphics[width=0.425\textwidth]{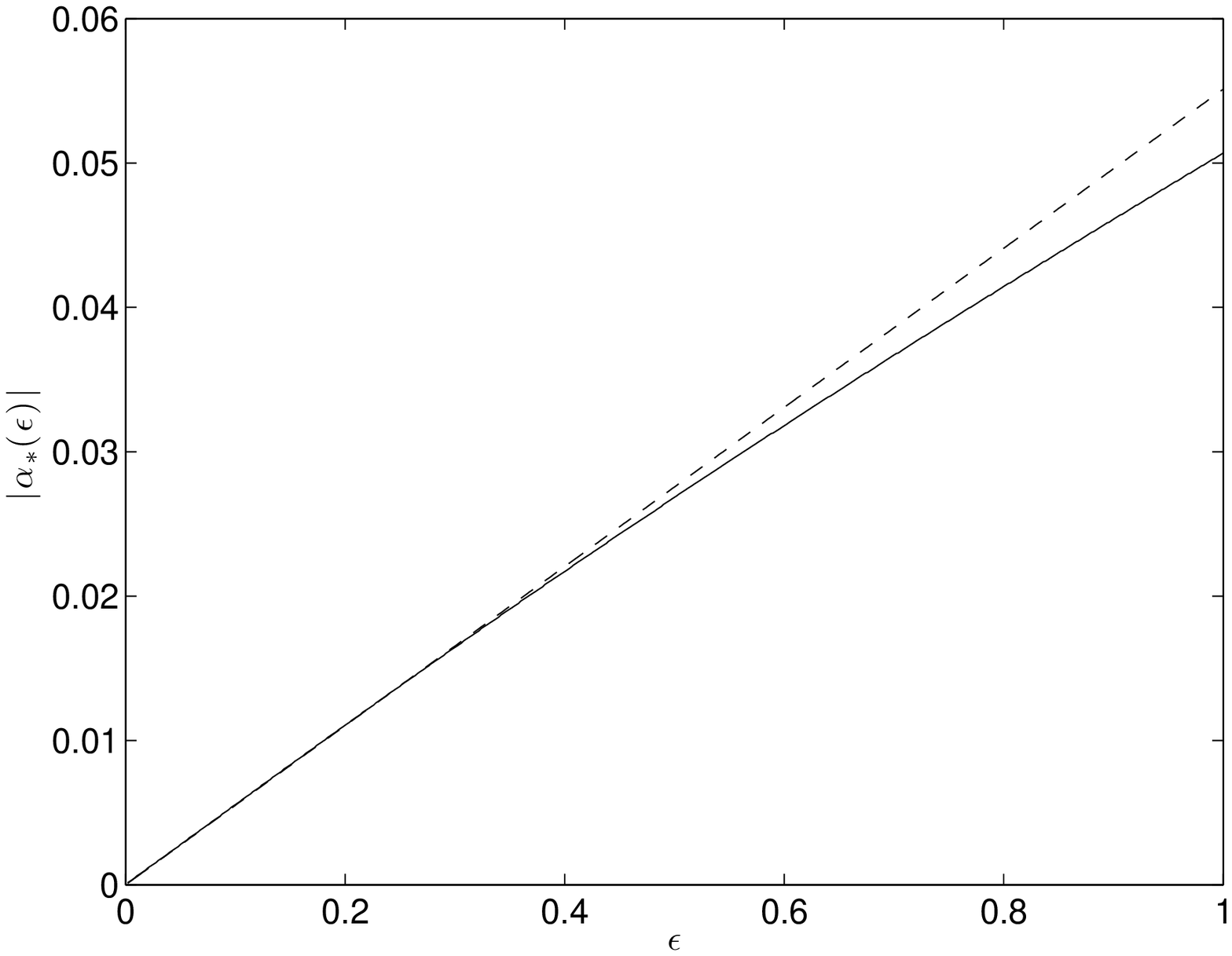}}\qquad
\subfigure[$\lambda_*(\eps)$]{\label{fig:npradial_lambda_star}\includegraphics[width=0.415\textwidth]{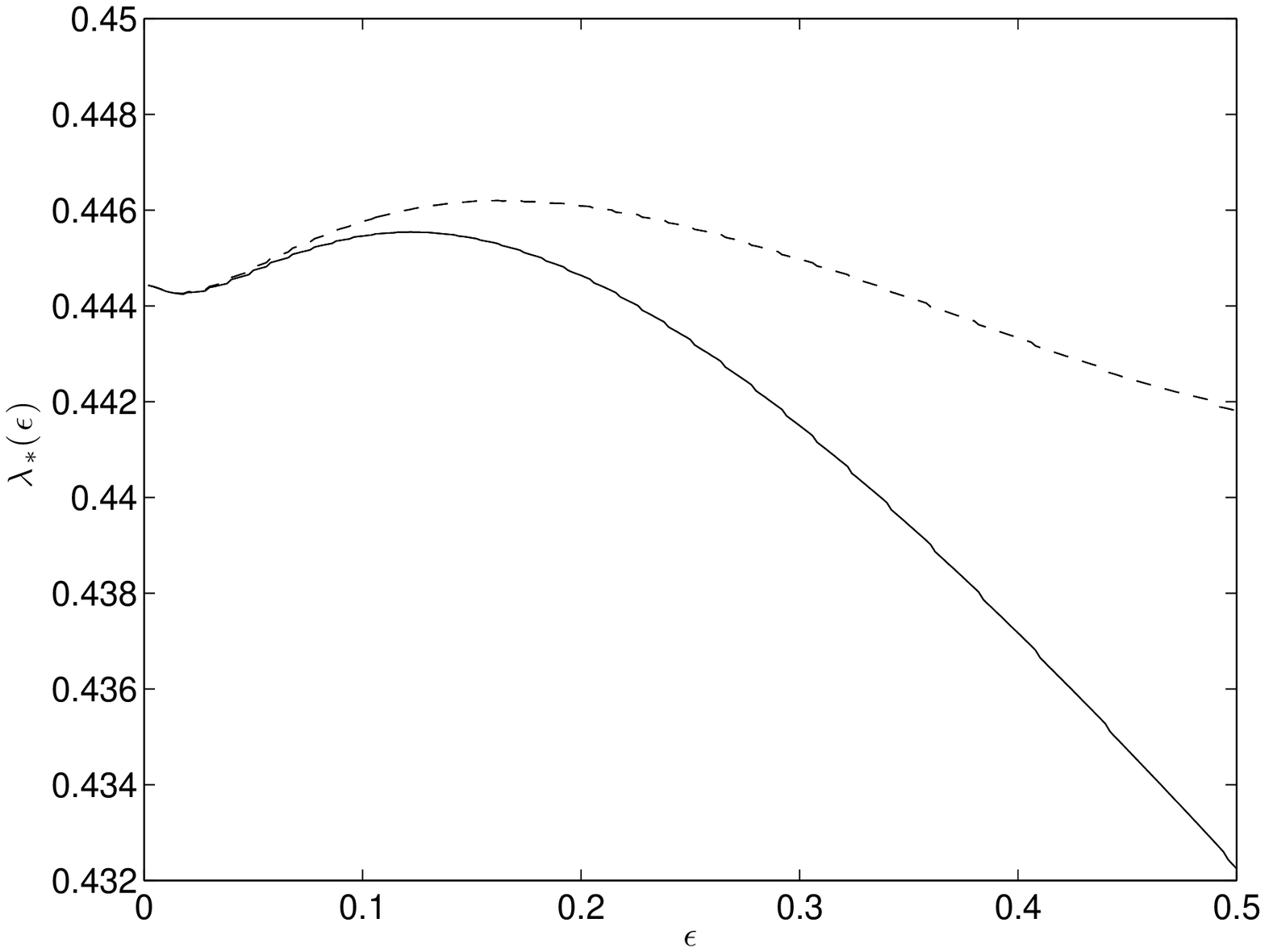}}
\caption{Comparison of the asymptotic prediction, \eqref{eq:asymdeadendpt}, (dashed line) of the dead-end point $(\lambda_*(\eps),|\alpha_*(\eps)|)$ with full numeric computations (solid) for: \subref{eq:npradial_alpha_star} the  $\bigoh(\eps^2)$ correction of $|\alpha_*(\eps)|$;   \subref{fig:npradial_lambda_star} $\lambda_*(\eps)$. Notice that the scale on the $y$-axis of the right figure is quite fine and so the agreement for $\lambda_*(\eps)$ is in fact better than the figures makes it appear. \label{fig:crit_comp} }
\end{figure}

\begin{figure}[H]
\centering
\subfigure[Global Approximation -- $\eps^2 = 0.2$.]{\includegraphics[width=0.425\textwidth]{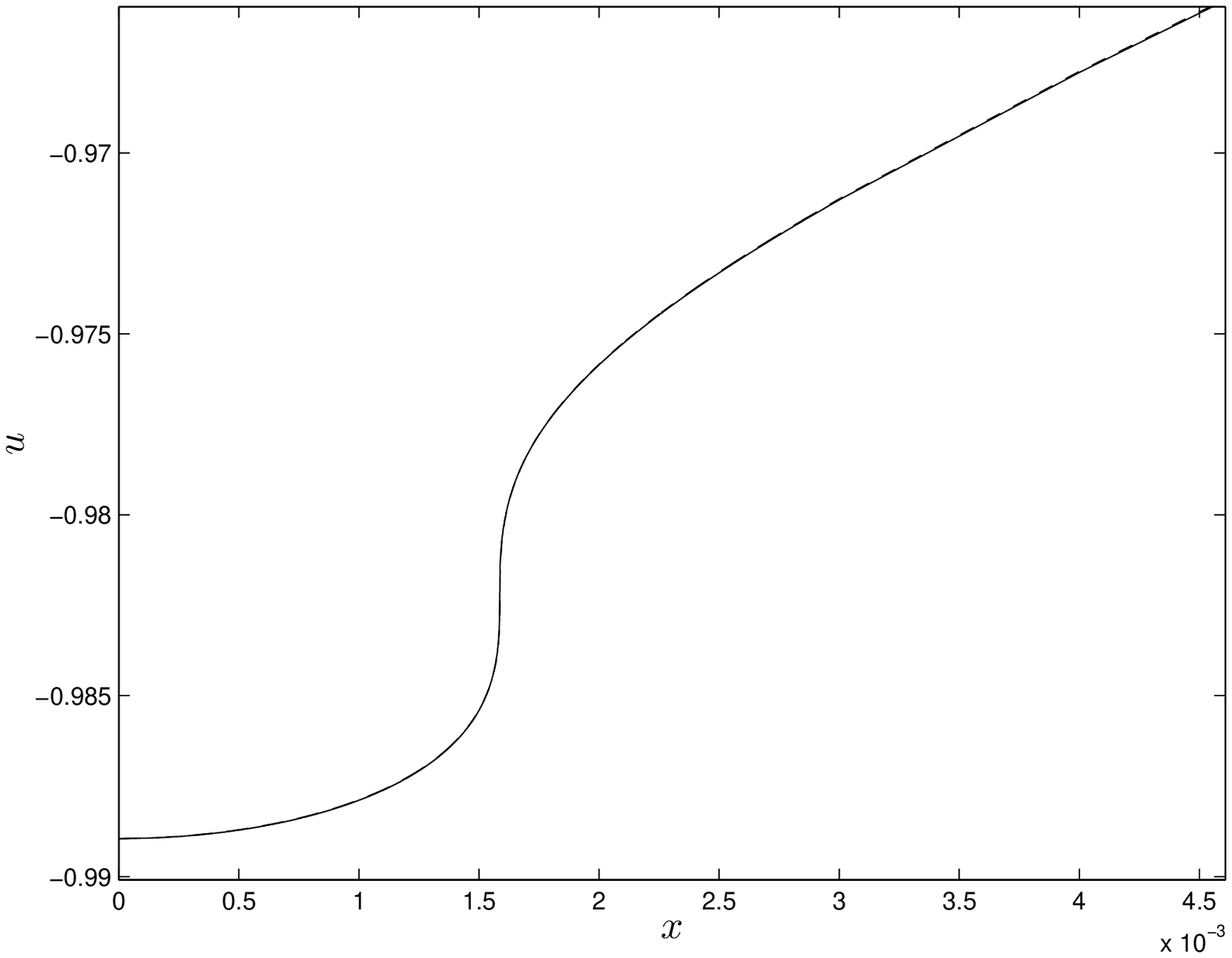}\label{fig:blowup_inner_a}}\qquad
\subfigure[Blow up of $w'_0(\rho;\delta_0)$ as $\delta_0\to\delta_0^{\ast}$.]{\includegraphics[width=0.425\textwidth]{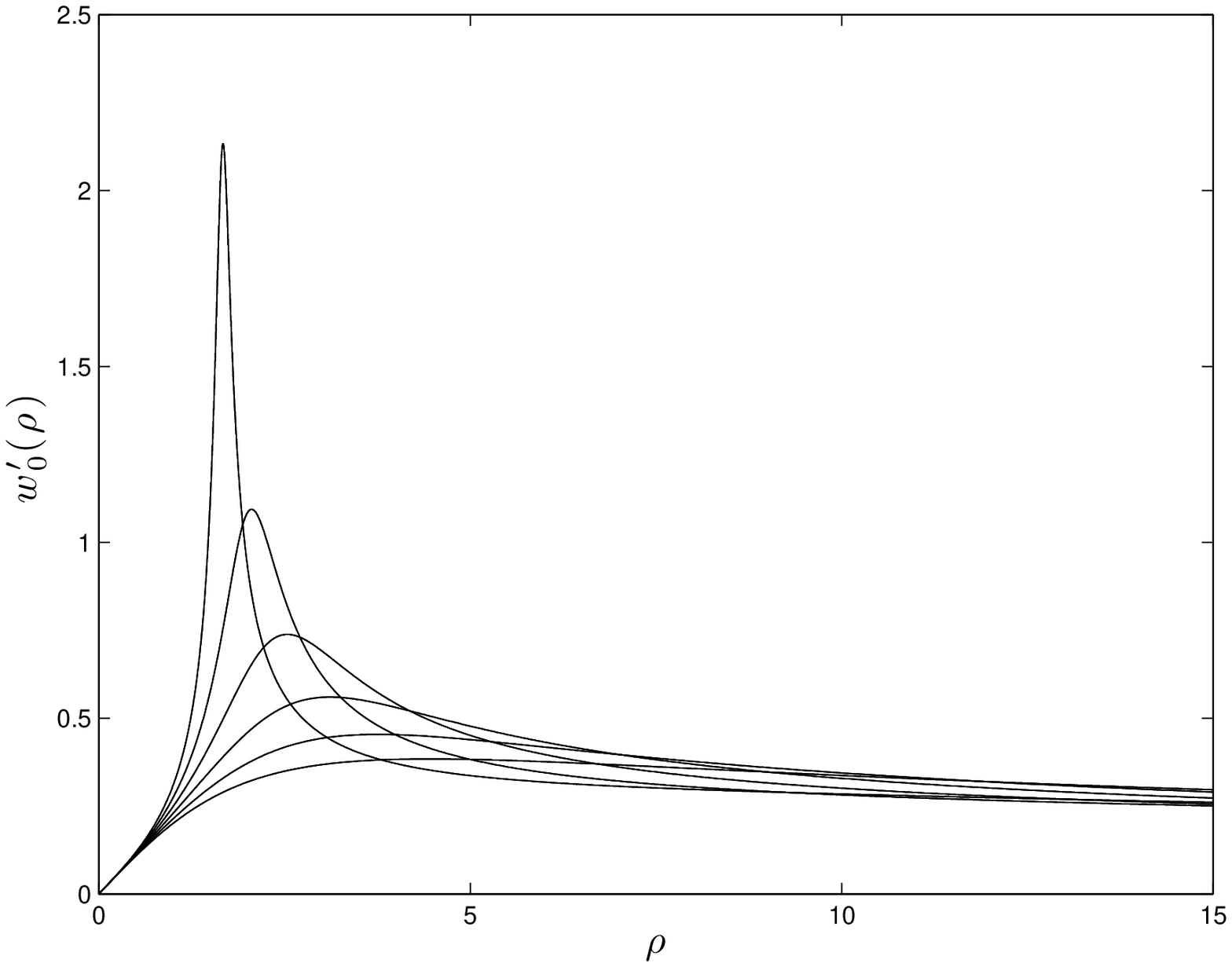}\label{fig:blowup_inner_b}}
\caption{\subref{fig:blowup_inner_a} The numerical (solid) and global asymptotic (dashed) profile in the boundary layer. The curve is almost vertical indicating that the derivative is becoming infinite. \subref{fig:blowup_inner_b} The derivative function $w'_0(\rho;\delta_0)$ plotted for several $\delta_0$. As $\delta_0\to \delta_0^{*}$, it observed that $w_0'(\rho;\delta_0)$ appears to develop a singularity at a finite $\rho^*$. \label{fig:blowup_inner}}
\end{figure}

\subsection{Arc length asymptotic analysis}\label{sect:arclen2d}
In this section, we analyze parameterized solutions $(r(s),u(r(s))) = (r(s),z(s))$ of \eqref{eq:gov2dradpde} where the arc length $s$ satisfies the relationship $ d r^2 + \eps^2 d z^2 = ds^2$. When reformulated in terms of $r(s)$ and $z(s)$, \eqref{eq:gov2dradpde} becomes a coupled set of ODEs:
\begin{equation}\tag{\ref{eq:arclengthsystem}}\label{eq:paramemsode}
 \begin{alignedat}{2}
  &r''  = -\frac{\eps^2 \lambda z'}{(1+z)^2}+\frac{\eps^2 (z')^2}{r}, \qquad  z'' + \frac{r' z'}{r} = \frac{\lambda r'}{(1+z)^2}, & \qquad & 0<s<\ell; \\
 & r(0)=0, \ r'(0)=1, \ r(\ell) = 1, \ z'(0)=0, \ z(\ell) = 0,
 \end{alignedat}
\end{equation}
where $0< \eps^2 \ll 1$ and $\lambda$ and $\ell$ are unknown parameters to be determined. Furthermore, to facilitate the analysis of the upper solution branch, we impose the condition $z(0) = -1 + \delta$ and study \eqref{eq:paramemsode} in the limits $\eps\to 0^+$ and $\delta \to 0^+$, where the relationship  between these two small parameters is to be determined. In the outer region away from $s=0$, we expand $r$, $z$, $\lambda$ and $\ell$ as 
\begin{subequations}\label{eq:outerexpan} 
\begin{equation} \label{eq:outerexpansub1}
 \begin{aligned}
  r(s;\eps) & = r_0(s) + \eps^2 r_1(s) + \eps^4 r_2(s) + \bigoh(\eps^6), \\
   z(s;\eps) & = z_0(s) + \eps^2 z_1(s) + \eps^4 z_2(s) +  \bigoh(\eps^6),
 \end{aligned}
 \ee
 and 
 \be
 \lambda(\eps)  = \lambda_0 + \eps^2 \lambda_1  + \eps^4 \lambda_2 + \bigoh(\eps^6),  \qquad \ell(\eps)  = \ell_0 + \eps^2 \ell_1 + \eps^4 \ell_2 + \bigoh(\eps^6), \label{eq:outer_def_lambda_ell}
\end{equation}
\end{subequations}
which upon substituting into \eqref{eq:paramemsode} gives 
\begin{equation}\label{eq:outord1}
 \begin{alignedat}{1}
  & r_0''  = 0, \qquad z_0'' + \frac{r_0' z_0'}{r_0} =\frac{\lambda_0 r_0'}{(1+z_0)^2},  \qquad  0<s<1; \\
 & r_0(0)=0, \  r_0'(0)=1, \ r_0(\ell_0) = 1, \ z(0)=-1, \ z_0'  (0)=0, \ z_0(\ell_0) = 0,
 \end{alignedat}
\end{equation}
at order $\bigoh(1)$. Therefore in solving \eqref{eq:outord1} we find
\be\label{eq:outord1soln}
 r_0(s)=s , \quad  z_0(s) = -1 + s^{2/3}, \quad \lambda_0 = \frac{4}{9},  \quad \ell_0 =1.
\ee
However, $z_0'(0) \not=0$ and therefore, we have a boundary layer at $s = 0$ for $z(s)$. Next, from \eqref{eq:paramemsode}, \eqref{eq:outerexpan} and \eqref{eq:outord1soln} we have 
\begin{equation}\label{eq:outord2}
 \begin{alignedat}{1}
  &r_1''  = \frac{4}{27} s^{-5/3}, \qquad  z_1'' + \frac{1}{s} z_1' + \frac{8}{9s^2} z_1 = \frac{2}{3} \frac{r_1}{s^{7/3}} + \frac{9 \lambda_1 - 2 r_1'}{9 s^{4/3}} ,\\
 & r_1(0)=0, \  r_1'(0)=0, \ r_1(1) = -\ell_1, \ z_1(1) = -\frac{2}{3}\ell_1,
 \end{alignedat}
\end{equation}
at order $\bigoh(\eps^2)$. The solution for $r_1(s)$ is 
\be\label{eq:ord2routersoln} 
 r_1(s) = \left(\frac23 -\ell_1\right) s -\frac{2}{3} s^{1/3} 
\ee
where the condition $r_1'(0)=0$ will be enforced in the boundary layer at $s=0$. Then using \eqref{eq:ord2routersoln} in \eqref{eq:outord2}, we deduce
\[
 \begin{alignedat}{1}
  &z_1'' + \frac{1}{s} z_1' + \frac{8}{9s^2} z_1 =-\frac{32}{81s^2} + \left( \lambda_1 + \frac{8}{27} - \frac{4}{9} \ell_1\right)\frac{1}{s^{4/3}},\qquad z_1(1) = 0,
 \end{alignedat}
\]
which upon solving gives
\be\label{eq:outordereps2soln}
 z_1(s) = \frac{(27\lambda_1 + 8 -12 \ell_1)}{36}s^{2/3} - \frac{4}{9} +A_1 \sin\left( \omega \log{s} + \phi_1 \right),  
\ee
where $\omega \ce 2\sqrt{2}/3$. Here, $A_1$ and $\varphi_1$ are constants that will be determined by matching and the value of $\lambda_1$ will be determined later by applying the condition $z_1(1)=-2 \ell_1/3$. In order to fix the value of $\ell_1$, an expansion to higher order is required. Accordingly, we use \eqref{eq:outerexpan} in \eqref{eq:paramemsode} to find a system of ODEs at order $\bigoh(\eps^4)$ (see \ref{app:secondordouterODE}), which upon solving gives 
\[
  r_2(s) =  \frac{K_1}{s^{1/3}} + K_2 +\frac{6 \ell_1 -4 + 36 A_1 \sin \phi_1 }{27} s^{1/3} +C_2 s,
\]
where 
\[
 \begin{aligned}
  K_1& \ce \frac{2 + 4 \sqrt{2} A_1 \cos\left(\omega \log{s} + \phi_1 \right) - 16 A_1 \sin\left(\omega \log{s} + \phi_1 \right)}{27 },\\
  K_2 & \ce \left(\frac{2}{27} - \frac{2 \ell_1}{3} + \ell_1^2 -\ell_2 - C_2- \frac{2}{9}\omega A_1 \cos{\phi_1} - \frac{20}{27} A_1 \sin{\phi_1} \right),
 \end{aligned}
\]
and 
\[
  z_2(s) = \frac{K_3}{s^{2/3}} + \frac{K_4}{s^{1/3}} + \frac{4}{3} A_1 \sin\phi_1+A_2 \sin\left(\omega\log{s}+\phi_2 \right) + K_5 s^{2/3},
\]
where 
\[
 \begin{aligned}
  K_3 & \ce  \frac{2}{81} + \frac{A_1^2}{2} - \frac{58 \sqrt{2}}{81} A_1 \cos\left(\omega\log{s}+\phi_1\right)- \frac{20}{81} A_1 \sin\left(\omega\log{s}+\phi_1\right)  \\
  	& \qquad + \frac{5}{38} A_1^2 \cos\left(2\omega\log{s} + 2 \phi_1\right) +  \frac{2\sqrt{2}}{19} A_1^2 \sin\left( 2\omega\log{s}+2 \phi_1\right),\\
	& \qquad \\
  K_4 & \ce \frac{4 - 36 \ell_1 + 54 \ell_1^2 - 54 \ell_2 - 54 C_2- 8 \sqrt{2} A_1 \cos{\phi_1}- 40 A_1 \sin{\phi_1}}{18}, \\
  K_5 & \ce \frac{48 \ell_1 - 36 \ell_1^2 -16 - 162 A_1^2 +  243 \lambda_2 + 108 C_2}{324}\\
  	& \qquad + \frac{9 A_1^2 \cos{2 \phi_1} - 
  2 A_1 (3 \ell_1-2) \sin{\phi_1}}{18}.
 \end{aligned}
\]

Next we introduce the inner variable $\rho = s/\gamma$, which after plugging into $z_0$ gives the near field behavior $z = -1 + \gamma^{2/3} \rho + \ldots$ as $s \to 0^+$. Moreover, $z = -1+ \bigoh(\delta)$ in the inner layer, and as a result, we choose $\gamma = \delta^{3/2}$. Then for matching we write the outer solution, \eqref{eq:outerexpansub1}, in terms of the inner variable, $\rho = s/\delta^{3/2}$, to obtain 
\begin{subequations}\label{eq:outersolnmatch2}
\be\label{eq:outersolnmatch2a}
 \begin{aligned}
 r & = \delta^{3/2} \left(\rho  -  \frac{2 \delta_0}{3} \rho^{1/3} + \frac{\delta_0^2 K_1}{\rho^{1/3}} \right) + \delta^2 \delta_0^2 K_2  \\
 & \qquad + \delta^{5/2} \left(\delta_0\left(\frac{2}{3} - \ell_1 \right)\rho + \frac{\delta_0^2(6 \ell_1 -4 + 36 A_1 \sin \phi_1) }{27} \rho^{1/3}  \right)   + \bigoh(\delta^{7/2})
 \end{aligned}
 \ee
 \be\label{eq:outersolnmatch2b}
 \begin{aligned}
 z& = -1 +  \delta\left( \rho^{2/3}  - \frac{4\delta_0}{9} + \delta_0 A_1 \sin\left( \omega\log{\rho} +\sqrt{2} \log{\delta}  + \phi_1 \right)+ \frac{\delta_0^2 K_3}{\rho^{2/3}}\right) \\
 &\qquad +\delta^{3/2}  \frac{\delta_0^2 K_4}{\rho^{1/3}} \\
 &\qquad+ \delta^2 \left(\left( \frac{(4-6 \ell_1)\delta_0}{9} - \delta_0 A_1 \sin{\phi_1}\right) \rho^{2/3}  \right. \\
 & \hspace{.75in} \left.+ \frac{4 \delta_0^2}{3} A_1 \sin\phi_1+ \delta_0^2 A_2 \sin\left(\omega\log{(\delta^{3/2}\rho)}+\phi_2 \right)\right)  +  \bigoh(\delta^3)
\end{aligned}
\ee
\end{subequations}
as $s \to 0^+$. It will be seen that the inner solution cannot be matched to the order $\bigoh(\delta^2)$ of \eqref{eq:outersolnmatch2a}. For this reason, the constant $C_2$ is chosen such that $K_2$ vanishes, which causes $K_4$ to vanish and  gives the reduced local behavior
\begin{subequations}\label{eq:outersolnmatch_red2}
\be\label{eq:outersolnmatch_red2a}
 \begin{aligned}
 r   =  \delta^{3/2} &\left[ \rho \vphantom{\frac{2}{3}}  -  \frac{2 \delta_0}{3} \rho^{1/3} +\mathcal{O}(\rho^{-1/3})  \right.  \\
 &  \quad+ \left.  \delta \left(\delta_0\left(\frac{2}{3} - \ell_1 \right)\rho + \frac{\delta_0^2(6 \ell_1 -4 + 36 A_1 \sin \phi_1) }{27} \rho^{1/3}  \right)   + \bigoh(\delta^{2}) \right],
 \end{aligned}
 \ee
 \be\label{eq:outersolnmatch_red2b}
 \begin{aligned}
 z = -1 +  \delta & \left[  \rho^{2/3}   \vphantom{\frac{2}{3}} - \frac{4\delta_0}{9} + \delta_0 A_1 \sin\left( \omega\log{\rho} +\sqrt{2} \log{\delta}  + \varphi_1 \right)+ \mathcal{O}(\rho^{-2/3})\right.\\
  & \quad +  \left.\delta \left(\left( \frac{(4-6 \ell_1)\delta_0}{9} - \delta_0 A_1 \sin{\phi_1}\right) \rho^{2/3}  + \mathcal{O}(1) \right)  +\bigoh(\delta^2)\right],
\end{aligned}
\ee
\end{subequations}
as $s \to 0^+$. As a consequence, these local expansions motivate us to introduce the following local variables within the vicinity of $s=0$,
\be\label{eq:innervariables}
 \rho = s/\delta^{3/2}, \quad r(s) = \delta^{3/2} R(\rho), \qquad z(s) = -1+\delta Z(\rho),
\ee
which transform \eqref{eq:paramemsode} into
\be\label{eq:innerproblem} 
 \begin{alignedat}{2}
  & R''= \frac{\eps^2}{\delta}  \left(-\frac{\lambda Z'}{Z^2}+ \frac{ (Z')^2}{R}\right), \qquad Z'' + \frac{R'  Z'}{R} = \frac{\lambda R'}{ Z^2}.
 \end{alignedat}
\ee
Here the dominant scalings require $\eps^2/\delta= \delta_0$, where $\delta_0 = \bigoh(1)$. Therefore, expanding $R$, $Z$ and $\lambda$ as 
\be\label{eq:innerexpansions}
 R = R_0 + \delta R_1 + \bigoh(\delta^2) , \quad Z=Z_0 + \delta Z_1 + \bigoh(\delta^2), \quad \lambda = \lambda_0 + \delta \delta_0 \lambda_1 + \bigoh(\delta^2),
\ee respectively, we find that the leading order problem for the inner solution is 
\begin{equation}\label{eq:insolnord1}
 \begin{alignedat}{2}
  & R_0''= -\delta_0 \frac{ \lambda_0 Z_0'}{Z_0^2}+ \delta_0 \frac{ (Z_0')^2}{R_0}, \qquad Z_0'' + \frac{R_0'  Z_0'}{R_0} = \frac{\lambda_0 R_0'}{ Z_0^2}, \qquad 0< \rho < \infty; \\
  & R_0(0) = 0, \ R_0'(0)=1, \ Z_0(0)= 1, \ Z_0'(0) = 0.
 \end{alignedat}
\end{equation}
To find the far field behavior of $R_0$ and $Z_0$, we assume $R_0 \sim \rho + V$ as $\rho \to \infty$, where $V \ll \rho$, and $Z_0 \sim \rho^{2/3} + W$ as $\rho \to \infty$, where $W \ll \rho^{2/3}$. Substituting these relations into \eqref{eq:insolnord1}, gives asymptotic differential equations for $V(\rho)$ and $W(\rho)$, 
\begin{equation}\nonumber
 V'' \sim \frac{4\delta_0}{27} \rho^{-5/3}, \quad W'' + \frac{W'}{\rho} + \frac{2 \lambda_0}{\rho^2} W \sim -\frac{32 \delta_0}{81}\frac{1}{\rho^2},  \quad \mbox{ as } \rho \to \infty,
\end{equation}
whose solution is $V \sim -(2 \delta_0/3) \rho^{1/3}$, $W \sim - \delta_0\lambda_0 + \til{A}_1(\delta_0) \sin(\omega \log{\rho} + \til{\phi}_1(\delta_0) )$ as $\rho \to \infty$. Hence, the far field behavior for the solution, $R_0$ and $Z_0$ of \eqref{eq:insolnord1} is 
\be\label{eq:inord1ff}
 \begin{aligned}
  R_0(\rho) &= \rho -\frac{2 \delta_0}{3} \rho^{1/3} + \littleoh(1), \\
  Z_0(\rho) & = \rho^{2/3}  -  \delta_0 \lambda_0 + \til{A}_1(\delta_0) \sin\left( \omega \log{\rho} + \til{\phi}_1(\delta_0) \right) +\littleoh(1),
\end{aligned}
 \quad \mbox{as } \rho \to \infty.
\ee
Proceeding to $\mathcal{O}(\delta)$ terms, we substitute \eqref{eq:innerexpansions} into \eqref{eq:innerproblem} and collect the $\delta$ terms to find that $R_1$ and $Z_1$ satisfy
\be\label{eq:order2innerprob}
 \begin{aligned}
  &R_1''  = \delta_0 \left( - \frac{\delta_0 \lambda_1 Z_0' + \lambda_0 Z_1 }{Z_0^2}+ \frac{2 \lambda_0 Z_1 Z_0'}{Z_0^3} - \frac{R_1 Z_0'^2}{R_0^2}  +  \frac{2 Z_0' Z_1' }{R_0}\right), \ \ 0<\rho<\infty, \\
  &Z_1''  = \frac{\delta_0 \lambda_1 R_0'+ \lambda_0 R_1'}{Z_0^2}- \frac{2 \lambda_0 Z_1 R_0'}{Z_0^3}  + \frac{R_1 R_0' Z_0'}{R_0^2} - \frac{R_1' Z_0' + R_0'Z_1'}{R_0},  \ \ 0<\rho<\infty, \\
  &R_1(0) = 0, \ R_1'(0)=0, \ Z_1(0)= 0, \ Z_1'(0) = 0.
 \end{aligned}
 \ee
 From \eqref{eq:outersolnmatch_red2} we expect the far field behavior of both $R_1$ and $Z_1$ to grow algebraically. Therefore, we assume $R_1 \sim  a \rho^{\alpha} $ and $Z_1 \sim b \rho^{\beta}$ as $\rho \to \infty$ and substitute this behavior into \eqref{eq:order2innerprob} along with the far field behavior of $R_0$ and $Z_0$. After a dominant balance, this yields
 \[
 \begin{aligned}
  a \alpha (\alpha-1)  \rho^{\alpha-2} & \sim -\frac{2 \delta_0^2 \lambda_1}{3}\rho^{-5/3}+ \left(\frac{4b \beta \delta_0}{3} +  \frac{4b \delta_0 \lambda_0}{3} - b \beta \delta_0 \lambda_0 \right) \rho^{\beta-7/3}, \\
  b \beta (\beta-1)  \rho^{\beta-2} & \sim \delta_0 \lambda_1 \rho^{-4/3} -  b \left(\beta  +  2  \lambda_0\right) \rho^{\beta-2},
  \end{aligned}
\]
 as $\rho \to \infty$. Consequently, 
 \begin{equation}\nonumber
  a = -\delta_0^2 \lambda_1, \quad \alpha = \frac{1}{3}, \quad b = \frac{3 \delta_0 \lambda_1}{4}, \quad \beta = \frac{2}{3},
 \end{equation}
which implies that 
\be\label{eq:inord2fflo}
 R_1 \sim  -\delta_0^2 \lambda_1 \rho^{1/3}, \qquad Z_1 \sim \frac{3 \delta_0 \lambda_1}{4} \rho^{2/3}, \quad \mbox{as } \rho \to \infty.
\ee
As a result, \eqref{eq:innerexpansions}, \eqref{eq:inord1ff} and \eqref{eq:inord2fflo} give the following far field behavior of the inner solution:
\be\label{eq:outersolnmatch}
 \begin{aligned}
 R & = \left(\rho -\frac{2 \delta_0}{3} \rho^{1/3} + \ldots\right) +\delta\left(-\delta_0^2 \lambda_1 \rho^{1/3} + \ldots \right) + \mathcal{O}(\delta^2),\\
 Z & =  \left(\rho^{2/3} -  \delta_0 \lambda_0 + \til{A}_1(\delta_0) \sin\left(\omega \log{\rho} + \til{\phi}_1(\delta_0) \right)+\ldots\right)\\& \qquad + \delta \left(\frac{3 \delta_0 \lambda_1}{4} \rho^{2/3}+\ldots\right) + \bigoh(\delta^2)
   \end{aligned}
\ee
as $\delta\to 0^+$ and $\rho \to \infty$. Then to match we compare \eqref{eq:innervariables}, using \eqref{eq:outersolnmatch}, with \eqref{eq:outersolnmatch_red2} to get
\be\label{eq:matchingCAphi}
 \ell_1=\frac{2}{3}, \quad A_1 = \frac{\til{A}_1(\delta_0)}{\delta_0}, \quad   \phi_1= \til{\phi}_1(\delta_0) - \sqrt{2}\log \delta, \quad \lambda_1 = -\frac{4}{3}A_1 \sin\phi_1.
\ee

Note that the boundary condition $z_1(1)=-2 \ell_1/3$ is automatically satisfied by the value of $\lambda_1$ determined in \eqref{eq:matchingCAphi}. By returning to the definition of $\lambda$ made in \eqref{eq:outer_def_lambda_ell} and recalling that $\eps^2/ \delta = \delta_0$ , a two term expansion of $\lambda$ is now given by 
\[
\lambda = \frac{4}{9} - \delta\,\frac{4}{3} \, \til{A}_1\!\!\left(\frac{\eps^2}{\delta}\right) \sin\left[\til{\phi}_1\!\!\left(\frac{\eps^2}{\delta}\right) - \sqrt{2}\log \delta \right] + \cdots.
\]
Next, we fix $\eps$ in the governing equation, \eqref{eq:paramemsode}; therefore, for our asymptotic analysis to remain valid, we need $\eps^2/ \delta = \delta_0 = \bigoh(1)$, with $\eps$ fixed, which leads to the following asymptotic result regarding the upper solution branch of the bifurcation diagram of \eqref{eq:paramemsode}. 
\begin{pr}\label{pr:2drad_p}
For solutions of \eqref{eq:paramemsode}, there is a regime where both $\eps \ll 1$ and $\delta \ll 1$, with $\eps^2/\delta = \mathcal{O}(1)$, such that the upper solution branch of the bifurcation curve has the asymptotic parameterization, $(\lambda(\delta;\eps),|z(0)|),$ where 
\be\label{eq:PRsct2_2_a}
 \begin{aligned}
 & |z(0)| = 1 - \delta, \quad \ \lambda = \lambda_0 - \delta\,\frac{4}{3} \, \til{A}_1\!\!\left(\frac{\eps^2}{\delta}\right) \sin\left[\til{\phi}_1\!\!\left(\frac{\eps^2}{\delta}\right) - \sqrt{2}\log \delta \right] + \bigoh(\delta^2). 
 \end{aligned} 
\ee
Moreover, $\lambda_0 = 4/9$, and $\til{A}_1(\delta_0)$ and $\til{\phi}_1(\delta_0)$ are functions determined by the far field behavior of $Z_0$,
\bsub\label{eq:2drad_np_innerprob}
\be
  Z_0(\rho)  = \rho^{2/3}  -\frac{4 \delta_0}{9} + \til{A}_1(\delta_0) \sin\left( \omega \log{\rho} + \til{\phi}_1(\delta_0) \right) +\littleoh(1) \quad \mbox{ as } \rho \to \infty,
\ee
of the initial value problem
\begin{equation}
 \begin{alignedat}{2}
  & R_0''= -\frac{4 \delta_0}{9} \frac{Z_0'}{Z_0^2}+ \delta_0 \frac{ (Z_0')^2}{R_0}, \qquad Z_0'' + \frac{R_0'  Z_0'}{R_0} = \frac{4}{9}\frac{ R_0'}{ Z_0^2}, \qquad 0< \rho < \infty; \\
  & R_0(0) = 0, \ R_0'(0)=1, \ Z_0(0)= 1, \ Z_0'(0) = 0,
 \end{alignedat}
\end{equation}
where
\be
 R_0(\rho) = \rho -\frac{2 \delta_0}{3} \rho^{1/3}+ \littleoh(1) \quad \mbox{as } \rho \to \infty.
\ee
\esub
\end{pr}

To study the accuracy of this result, we again need to compute the functions $\til{A}_1(\delta_0)$ and $\til{\phi}_1(\delta_0)$, which is done by subtracting the growth term and applying a least squares fit to the remainder. The graphs of these functions are displayed in Figure~\ref{fig:A_phi_2}. As expected, these new functions are continuations of the old functions found in Principle Result \ref{pr:2drad_np}.
\begin{figure}[h]
\centering
\subfigure[$\til{A}_1(\delta_0)$]{\label{fig:Avseps02}\includegraphics[width=0.425\textwidth]{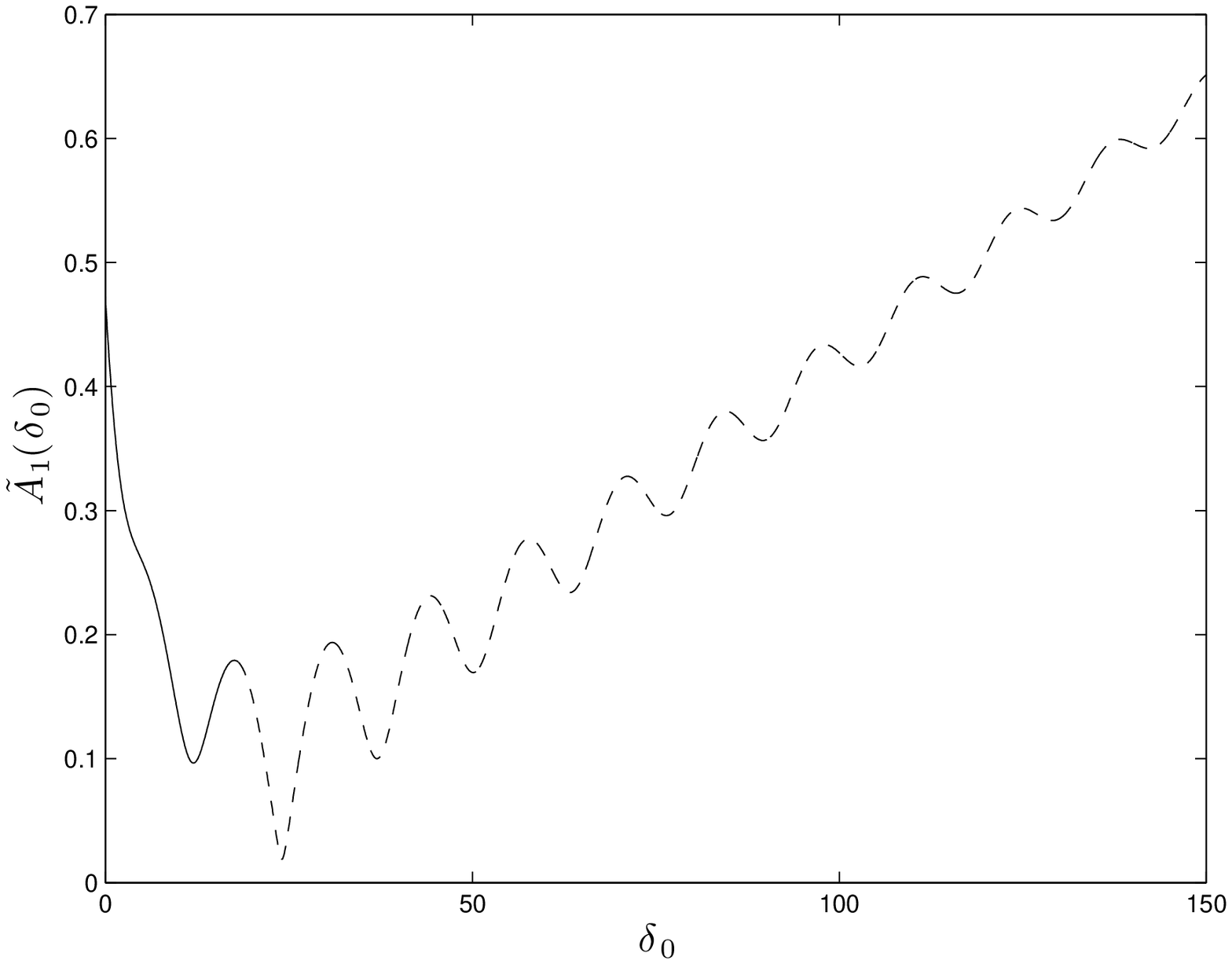}}\qquad
\subfigure[$\til{\phi}_1(\delta_0)$]{\includegraphics[width=0.425\textwidth]{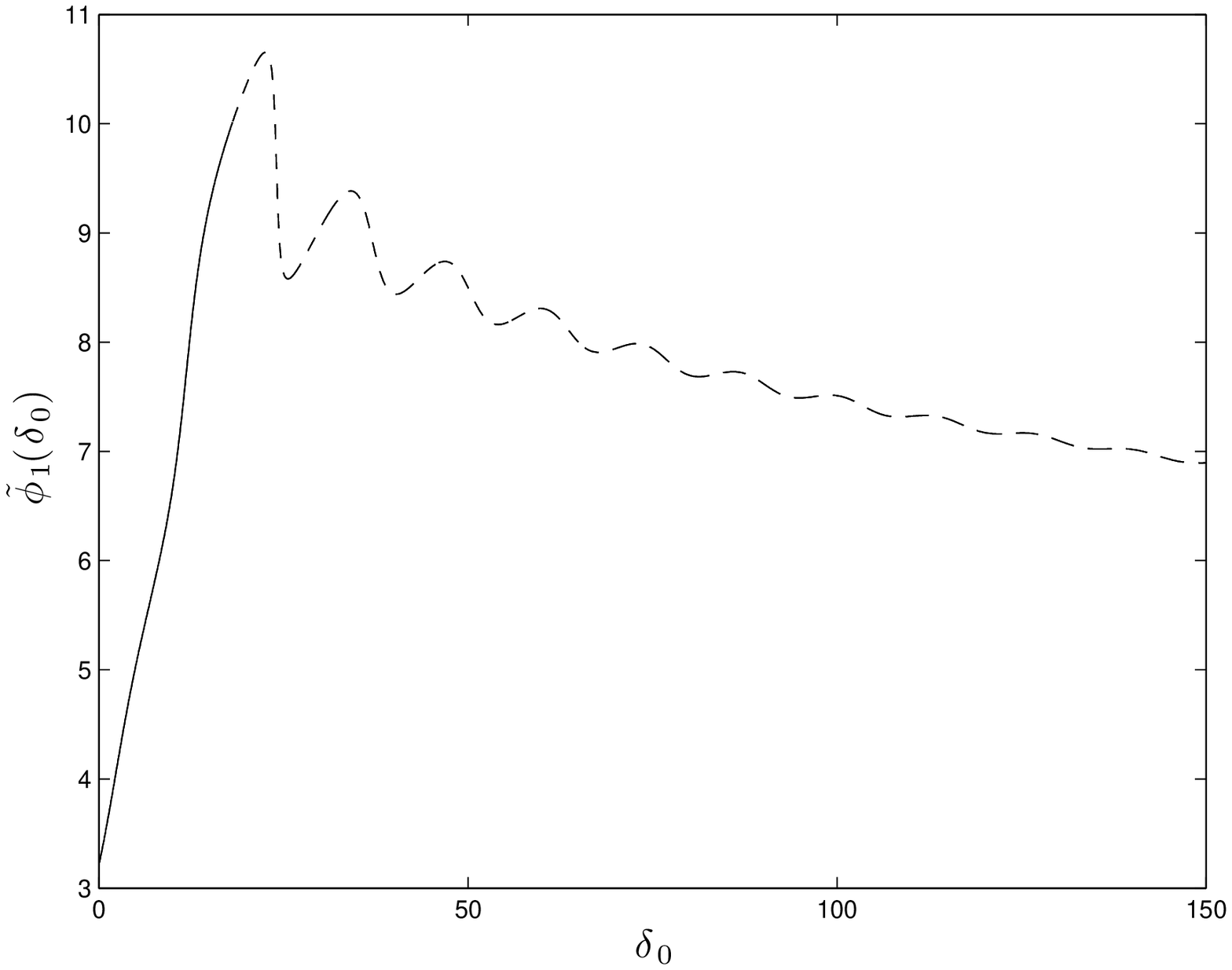}}
\caption{Graphs of $\til{A}_1(\delta_0)$ and $\til{\phi}_1(\delta_0)$ against $\delta_0$ computed from \eqref{eq:2drad_np_innerprob}. The solid line indicates where the coefficients agree with those compute from \eqref{PR1}. \label{fig:A_phi_2} }
\end{figure}

A combination of the asymptotic formula \eqref{eq:PRsct2_2_a} and the numerically obtained functions $\til{A}_1(\delta_0)$ and $\til{\phi}_1(\delta_0)$ allow for a reconstruction of the bifurcation diagram (see Figure \ref{fig:Comp_parametric}). Numerically, $\til{A}_1(\delta_0)$ appears to grow linearly as $\delta_0\to\infty$, which would indicate that $\delta \til{A}_1(\eps^2/\delta)$ is finite as $\delta\to0$. Therefore the analysis predicts that the upper solution branch of \eqref{eq:paramemsode} undergoes infinitely many fold points in a way similar to the upper branch of \eqref{eq:stdpde} for the two-dimensional unit disk. This prompts the following conjecture.

\begin{conj}\label{conj_1}
 For $\eps >0$ fixed and sufficiently small, the upper solution branch of the the bifurcation diagram of \eqref{eq:arclengthsystem} undergoes infinitely many folds and as $|z(0)| \to 1^-$, $\lambda>0$ goes to a finite value that is bounded away from zero.
\end{conj}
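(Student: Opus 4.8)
We outline a strategy by which Conjecture~\ref{conj_1} might be established. The plan is to upgrade the formal matched asymptotic construction behind Principal Result~\ref{pr:2drad_p} to a rigorous statement on the regime $\eps$ fixed and small, $0<\delta\ll1$, and then to reduce the two claims of the conjecture to analytic properties of the single leading-order inner initial value problem~\eqref{eq:2drad_np_innerprob}. Granting a uniform remainder in~\eqref{eq:PRsct2_2_a},
\[
 \lambda = \frac{4}{9} - \delta\,\frac{4}{3}\,\til{A}_1\!\!\left(\frac{\eps^2}{\delta}\right)\sin\!\left[\til{\phi}_1\!\!\left(\frac{\eps^2}{\delta}\right) - \sqrt{2}\log\delta\right] + \bigoh(\delta^2)\qquad\text{as }\delta\to 0^+,
\]
valid uniformly for $\eps^2/\delta$ in compact subsets of $(0,\infty)$, it suffices to prove: (i) the inner problem~\eqref{eq:2drad_np_innerprob} admits a solution $(R_0,Z_0)$ defined for all $\rho\in(0,\infty)$, with the stated far field, for \emph{every} $\delta_0>0$---in contrast to the unparameterized inner problem~\eqref{PR1_b}, which has no solution once $\delta_0\ge\delta_0^{\ast}$ by Lemma~\ref{lem:ip_main}; and (ii) the far-field data extracted from that solution satisfy $\til{A}_1(\delta_0)$ bounded away from zero for large $\delta_0$ and $\til{A}_1(\delta_0)=\bigoh(\delta_0)$ as $\delta_0\to\infty$ (numerically $\til{A}_1(\delta_0)\sim c\,\delta_0$), with $\til{\phi}_1(\delta_0)$ slowly varying. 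Given (i)--(ii): the argument of the sine in the display sweeps through all of $\R$ as $\delta\to0^+$---because of the $\sqrt{2}\log\delta$ term, itself a consequence of the indicial exponent $\omega=2\sqrt{2}/3$ and the $\delta^{3/2}$ inner scaling---while its coefficient $\delta\,\til{A}_1(\eps^2/\delta)=\bigoh(\eps^2)$ stays bounded, does not decay to zero, and hence dominates the $\bigoh(\delta^2)$ remainder; therefore $\D{\lambda}{\delta}$ changes sign infinitely often, which is the infinitely many folds, and $\lambda$ is pinned to within $\bigoh(\eps^2)$ of $4/9$, so for $\eps$ fixed and small it remains in a compact subinterval of $(0,\infty)$, i.e.\ bounded away from zero.

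For step (i), the decisive structural fact is that in the arc length formulation $\rho$ is a rescaling of arc length, so the obstruction exploited in Lemma~\ref{lem:ip_main} and Theorem~\ref{thm:fp_mainthm}---the appearance of a vertical tangent---no longer prevents continuation; the solution curve $(R_0,Z_0)$ simply turns over. The work is to produce a priori bounds on every finite $\rho$-interval keeping $Z_0$ bounded below by a positive constant (it starts at $Z_0(0)=1$, and the singular forcing $\propto Z_0^{-2}$ discourages any return toward $Z_0=0$) and $R_0$ positive and bounded, after which global existence follows from the standard continuation principle. The oscillatory far field is then read off by linearizing about the leading profile $R_0\sim\rho$, $Z_0\sim\rho^{2/3}$ (the common outer limit), exactly as in the formal derivation leading to~\eqref{eq:inord1ff}: at leading order the $R$- and $Z$-perturbations decouple, the $Z$-perturbation solving an Euler equation with purely imaginary indicial exponents $\pm i\omega$, $\omega=2\sqrt{2}/3$, so that perturbations of $Z_0$ about $\rho^{2/3}$ are bounded and oscillate like $\sin(\omega\log\rho+\text{const})$; a variation-of-constants / WKB argument then shows this behavior is attained, with amplitude $\til{A}_1(\delta_0)$ and phase $\til{\phi}_1(\delta_0)$ that depend real-analytically on $\delta_0$.

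Step (ii) is the essential obstacle, and it is the point for which the paper currently offers only numerical evidence (Figure~\ref{fig:A_phi_2}). The most promising route is to analyze the distinguished limit $\delta_0\to\infty$ directly: rescaling $\rho=\delta_0\sigma$, $R_0=\delta_0\hat R$ and parameterizing the curve $(\hat R,Z_0)$ by its own arc length turns~\eqref{eq:2drad_np_innerprob} into a perturbation of a spherical-cap equation, for which one expects the solution curve to wrap around an increasing number of times as $\delta_0\to\infty$; counting the sign changes of $R_0'$ should show this number tends to infinity, and the accumulated winding both forbids $\til{A}_1(\delta_0)$ from vanishing for large $\delta_0$ and yields the bound $\til{A}_1(\delta_0)=\bigoh(\delta_0)$. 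An independent consistency check is available from the observation that $\delta_0=\eps^2/\delta\to\infty$ also corresponds to $\eps\to0^+$ with $\delta$ fixed, i.e.\ to the standard second-order problem~\eqref{eq:stdpde} on the two-dimensional disk, whose bifurcation curve is known to spiral into $(\lambda,\|u\|_\infty)=(4/9,1)$ through infinitely many folds (Figure~\ref{fig:stdbds2d}); reconciling the two descriptions should identify the constant $c$ in $\til{A}_1(\delta_0)\sim c\,\delta_0$. Making this winding argument quantitative---so that $\til{A}_1(\delta_0)$ is \emph{provably} bounded away from zero and $\bigoh(\delta_0)$, rather than merely observed to be so---is where the real difficulty lies; everything else amounts to a careful, if lengthy, justification of the two-parameter matched expansion of \S\ref{sect:arclen2d}.
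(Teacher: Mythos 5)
This statement is a \emph{conjecture} in the paper: the authors offer no proof, only the formal two-term expansion \eqref{eq:PRsct2_2_a} of Principal Result \ref{pr:2drad_p} together with the numerical observation that $\til{A}_1(\delta_0)$ appears to grow linearly as $\delta_0\to\infty$, from which they infer that $\delta\,\til{A}_1(\eps^2/\delta)$ stays finite and nonzero as $\delta\to0$ and hence that the sine factor $\sin[\til{\phi}_1(\eps^2/\delta)-\sqrt{2}\log\delta]$ produces infinitely many folds about $\lambda=4/9$. Your proposal follows exactly this route, and your reduction to the two ingredients (i) global solvability of the inner problem \eqref{eq:2drad_np_innerprob} for all $\delta_0>0$ and (ii) the growth and non-vanishing of $\til{A}_1(\delta_0)$ as $\delta_0\to\infty$ is a fair and usefully explicit account of what a rigorous argument would require; you are also right that (ii) is the crux and that the paper supplies only Figure \ref{fig:A_phi_2} in its support. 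In that sense your write-up matches the paper's own reasoning and correctly identifies it as incomplete.

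Two points need correction. First, the uniformity you grant yourself--the remainder in \eqref{eq:PRsct2_2_a} controlled uniformly for $\eps^2/\delta$ in \emph{compact} subsets of $(0,\infty)$--is precisely not the regime the conjecture lives in: with $\eps$ fixed and $\delta\to0^+$ one has $\delta_0=\eps^2/\delta\to\infty$, so the matched expansion must be justified uniformly as $\delta_0\to\infty$ simultaneously with $\delta\to0$, which is a genuinely harder (and different) statement than the distinguished limit $\delta_0=\bigoh(1)$ in which the expansion was derived. Second, your ``independent consistency check'' is backwards: since $\delta_0=\eps^2/\delta$, the limit $\eps\to0^+$ with $\delta$ fixed gives $\delta_0\to0$, not $\delta_0\to\infty$; it is $\delta_0=0$ that reduces the inner problem to the semilinear one associated with \eqref{eq:stdpde} (this is why Figure \ref{fig:A_phi_2} shows agreement with the coefficients of \eqref{PR1} for \emph{small} $\delta_0$). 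So the infinite-fold spiral of Figure \ref{fig:stdbds2d} cannot be used to pin down the constant in $\til{A}_1(\delta_0)\sim c\,\delta_0$ as $\delta_0\to\infty$; the large-$\delta_0$ behavior must be extracted from the arc-length inner problem itself, as in the first half of your step (ii).
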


In Figure~\ref{fig:Comp_parametric}, \eqref{eq:PRsct2_2_a} is compared with the numerically computed bifurcation diagram of \eqref{eq:arclengthsystem}. From this we see that the observed agreement is very good.

\begin{figure}[h]
\centering
\subfigure[$\eps = 0.05$]{\label{fig:asympara_eps_pt_05}\includegraphics[width=0.4125\textwidth]{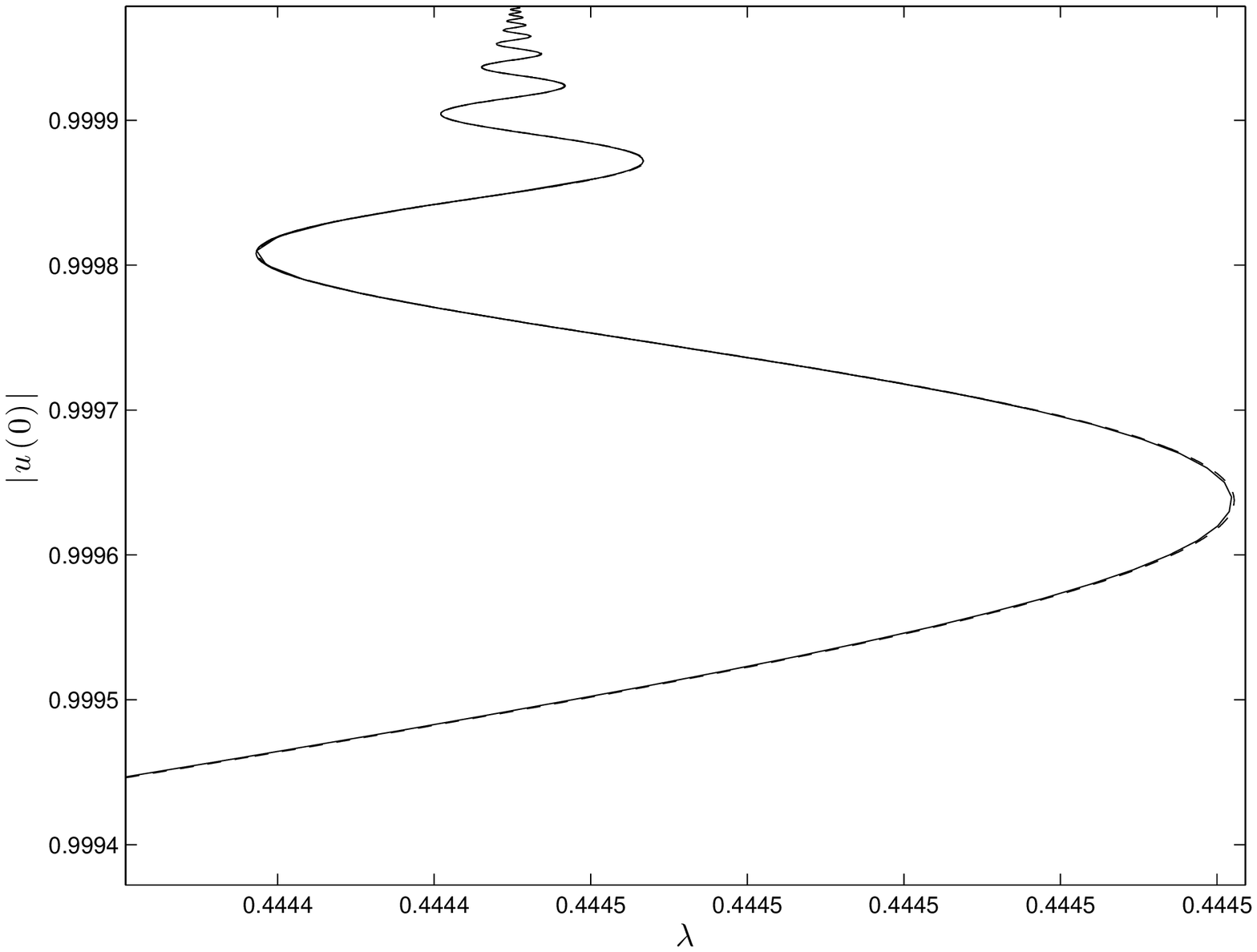}}\qquad
\subfigure[$\eps = 0.1$]{\label{fig:asympara_eps_pt_1}\includegraphics[width=0.425\textwidth]{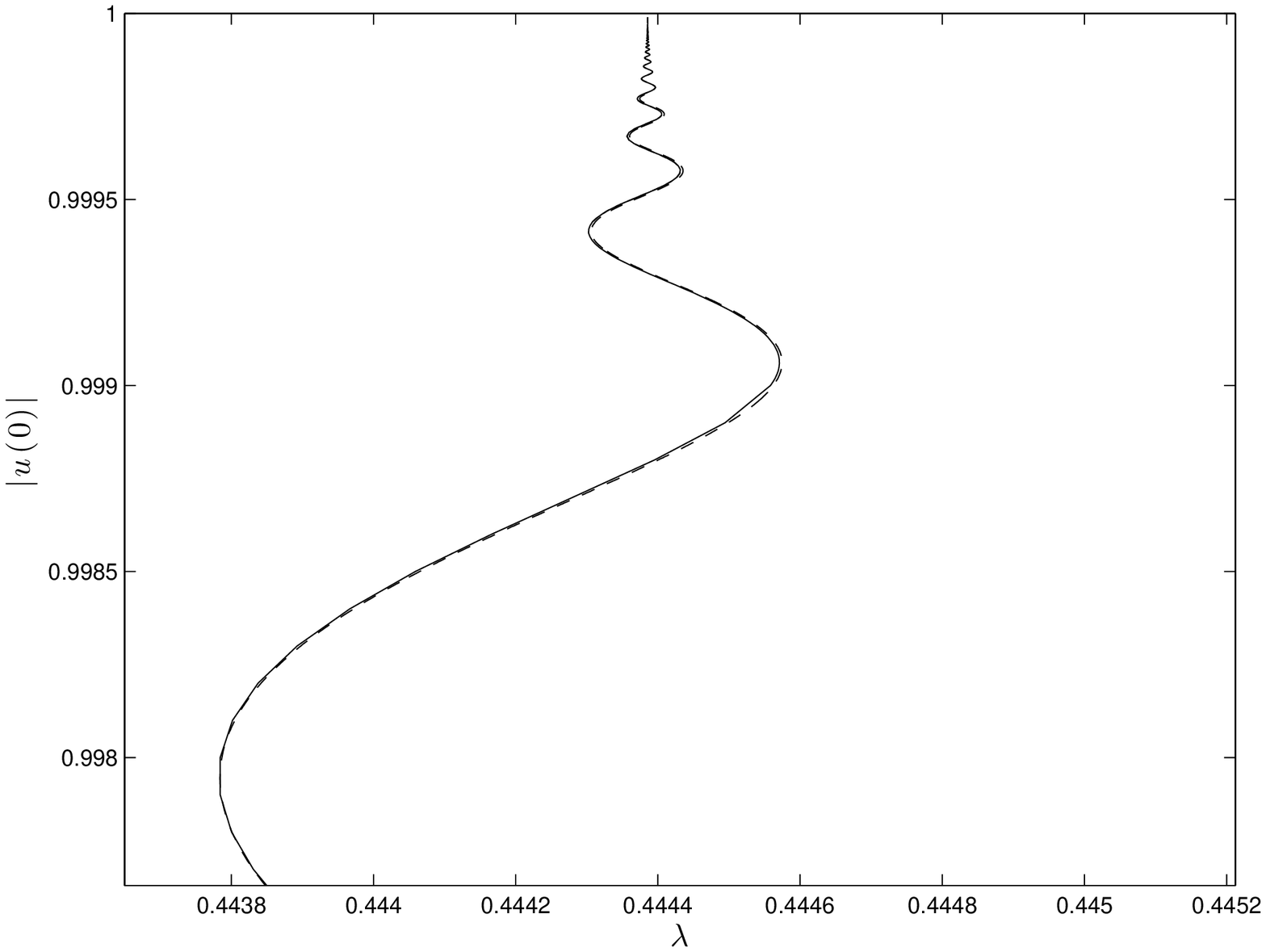}}
\caption{ Comparison of the full numerical solution of the bifurcation curve for \eqref{eq:gov2dradpde} (solid) with asymptotic formula \eqref{PR1_a} (dashed) for: \subref{fig:asympara_eps_pt_05} $\eps = 0.05$; \subref{fig:asympara_eps_pt_1} $\eps=0.1$. Note that in both \subref{fig:asympara_eps_pt_05}  and \subref{fig:asympara_eps_pt_1}  the asymptotic prediction agrees extremely well with the full numerical solution.} \label{fig:Comp_parametric}
\end{figure}

\section{Conclusion}\label{section:conclusion}

In this work, we have analyzed the \emph{upper branch} of solutions to equation \eqref{eq:govpde} in the limit as $|u(0)|\to1^-$ with particular focus on one and two spatial dimensions. In both cases, there are marked differences between the solution structure for $\eps=0$ and $\eps>0$, specifically a disappearance of solutions is observed, i.e., \eqref{eq:govpde} does not necessarily have a solution for all $|u(0)| \in[0,1)$. 

For the case $n=1$,  there are two qualitatively different bifurcations structures associated with \eqref{eq:govpde} which are separated by the cases $\eps\leq\eps^{\ast}$ and $\eps>\eps^{\ast}$ (c.f. Fig.~\ref{fig:mcbds1d}) for a critical value $\eps^{\ast}\approx0.35$. In the case where $\eps>\eps^{\ast}$, the upper solution branch of equation \eqref{eq:govpde} is defined only for $|u(0)|\in [0,\alpha_{**}(\eps))\cup(\alpha_*(\eps),1)$, where $0<\alpha_{**}(\eps)<\alpha_*(\eps)<1$, while for $\eps\leq\eps^{\ast}$ equation \eqref{eq:govpde} has a solution for any $|u(0)|\in [0,1)$. Therefore, in the case $n=1$, equation \eqref{eq:govpde} has solutions for $|u(0)|$ arbitrarily close to $1$, and these solutions have been constructed using singular perturbation techniques in the limit $|u(0)|\to1^-$. The limiting form of the bifurcation diagram, encapsulated in Principal Result \ref{pr:1d_conc}, has been found to be highly accurate, however, in contrast to the asymptotic analysis of symmetric solutions of \eqref{eq:govpde} in $n=2$, it does not predict the dead-end point $(\lambda_*(\eps),\alpha_{\ast}(\eps))$.

In the case $n=2$, we have shown that solutions of \eqref{eq:gov2dradpde} do not exist for $|u(0)|$ arbitrarily close to $1$ for any $\eps>0$. It is observed that as the upper solution branch is traversed, a singularity in the first derivative of the solution develops interior to the domain and at this singularity, the branch of solutions ends abruptly at a single dead-end point. Our asymptotic analysis allows for an accurate prediction of this point to be made by relating it to a singularity in an associated initial value problem. The analysis predicts that the singularity occurs for a fixed value of $\eps^2/(1+|u(0)|)$ and therefore establishes a relationship between a given $\eps$ and the dead-end point.  In each case, the asymptotic parameterizations obtained for the solution branch compare very well with full numerical solutions. The ability of the asymptotic analysis to predict the dead-end point when $n=2$ case but not when $n=1$, suggests a different underlying mechanism is responsible for the phenomena in each case.

Finally, by employing an arc length parameterization of solutions to \eqref{eq:gov2dradpde}, we find and analyze a new family of solutions emanating from the dead-end point. These solutions are found to be multi-valued and provide a natural continuation of the bifurcation curve beyond the dead-end point which retains the infinite fold points feature of the $\eps=0$ problem.

The main limitation of our study is that for $n=2$, we deal only with radially symmetric domains. Though our analysis has revealed interesting structure associated with solutions of \eqref{eq:gov2dradpde}, an investigation of \eqref{eq:govpde} on more general domains would be desirable; specifically, can a result like Theorem  \ref{thm:fp_mainthm} be formulated, where $|u(0)|$ is replaced by $\|u\|_\infty$.

Additionally, it would be interesting to study solutions of \eqref{eq:govpde} in higher spatial dimensions. Rigorously it has been shown that when $\Omega$ is the $n$-dimensional unit ball and $2\leq n\leq7$, the bifurcation diagram of \eqref{eq:stdpde} exhibits the infinite fold points structure \cite{guo02008infinitely}. What then is the effect of positive $\eps$ on the bifurcation structure of \eqref{eq:govpde} when $n\geq3$? Another interesting avenue for future investigation is the dynamic version of \eqref{eq:govpde}, namely the equation
\begin{equation}\label{conc_eq1}
\begin{array}{c}
u_t = \mbox{div } \ds\frac{\nabla u }{\sqrt{1 + \eps^2 |\nabla u|^2}} - \ds\frac{\lambda}{(1+u)^2}, \quad x\in \Omega;
\\[10pt]
 u(x,0) = 0, \quad x\in\Omega; \qquad  u = 0, \quad x\in\partial\Omega.
\end{array}
\end{equation}
Is there an equivalent of disappearance of solutions for \eqref{conc_eq1}, i.e., does $u$ or its derivatives exhibit a singularity at some finite $t$ before reaching $u=-1$?

\section*{Acknowelegments}
N.D.B. would like to thank J.\:A. Pelesko for many useful discussions.

\appendix\section*{}\label{app:intexpn}
Here we compute the integral given in \eqref{eq:integralasymbehavior} and then expand for $y \gg 1$. First, we split the aforementioned integral into two parts:
\begin{equation}\label{eq:appint}
I \colonequals \int_1^{w_0(y)}\frac{\eps^2 \lambda_1 + (1 - \eps^2 \lambda_1) z}{\sqrt{(2 - \eps^2 \lambda_1) z^2 -2(1 - \eps^2 \lambda_1) z-\eps^2 \lambda_1}}\ud z = I_1 + I_2
\end{equation}
where
\begin{equation}\nonumber
\begin{aligned}
I_1 & \colonequals \eps^2 \lambda_1\int_1^{w_0(y)}\frac{1}{\sqrt{(2 - \eps^2 \lambda_1) z^2 -2(1 - \eps^2 \lambda_1) z-\eps^2 \lambda_1}}\ud z, \\
I_2 & \colonequals (1 - \eps^2 \lambda_1) \int_1^{w_0(y)}\frac{z}{\sqrt{(2 - \eps^2 \lambda_1) z^2 -2(1 - \eps^2  \lambda_1) z-\eps^2 \lambda_1}}\ud z.
 \end{aligned}
\end{equation}
In computing $I_1$ and $I_2$, respectively, we obtain
\begin{equation}\nonumber
\begin{aligned}
I_1 & \colonequals \eps^2 \lambda_1\sqrt{2 - \eps^2 \lambda_1}\int_1^{w_0(y)}\frac{1}{\sqrt{ \left((2 - \eps^2 \lambda_1)z- (1 - \eps^2 \lambda_1)\right)^2- 1}}\ud z  \\
	& = \frac{\eps^2 \lambda_1}{\sqrt{2 - \eps^2 \lambda_1}}\int_1^{(2 - \eps^2 \lambda_1)w_0- (1 - \eps^2 \lambda_1)}\frac{1}{\sqrt{ \xi^2- 1}}\ud \xi \\
	& = \frac{\eps^2 \lambda_1}{\sqrt{2 - \eps^2 \lambda_1}} \cosh^{-1}\left((2 - \eps^2 \lambda_1)w_0- (1 - \eps^2 \lambda_1)\right).
 \end{aligned}
\end{equation}
and
\begin{equation}\nonumber
\begin{aligned}
I_2 & \colonequals (1 - \eps^2 \lambda_1)\sqrt{2 - \eps^2 \lambda_1} \int_1^{w_0(y)}\frac{z}{\sqrt{\left((2 - \eps^2 \lambda_1)z- (1 - \eps^2 \lambda_1)\right)^2- 1}}\ud z \\
	& = \frac{(1 - \eps^2 \lambda_1)}{ (2 - \eps^2 \lambda_1)^{3/2}} \int_1^{(2 - \eps^2 \lambda_1)w_0- (1 - \eps^2 \lambda_1)}\frac{\xi + (1 - \eps^2 \lambda_1)}{\sqrt{\xi^2- 1}}\ud \xi \\
	& = \frac{(1 - \eps^2 \lambda_1)}{ (2 - \eps^2 \lambda_1)^{3/2}} \sqrt{\left((2 - \eps^2 \lambda_1)w_0- (1 - \eps^2 \lambda_1)\right)^2- 1} \\
	& \qquad +  \frac{(1 - \eps^2 \lambda_1)^2}{ (2 - \eps^2 \lambda_1)^{3/2}} \cosh^{-1}\left((2 - \eps^2 \lambda_1)w_0- (1 - \eps^2 \lambda_1)\right)
 \end{aligned}
\end{equation}
Therefore, since $y \gg 1$, $w_0 \gg 1$, which implies
\begin{equation}\nonumber
\begin{aligned}
 \cosh^{-1}\left((2 - \eps^2 \lambda_1)w_0- (1 - \eps^2 \lambda_1)\right) &=  \log{w_0} + \log\left(4 - 2\eps^2 \lambda_1\right)+ \bigoh\left(w_0^{-1}\right) \\
 \sqrt{\left((2 - \eps^2 \lambda_1)w_0- (1 - \eps^2 \lambda_1)\right)^2- 1} &= (2 - \eps^2 \lambda_1)w_0 - (1 - \eps^2 \lambda_1)+ \bigoh\left({w_0}^{-1}\right)
 \end{aligned}
\end{equation}
and hence,
\begin{equation}\label{eq:appint1}
\begin{aligned}
I_1 & =  \frac{\eps^2 \lambda_1}{\sqrt{2 - \eps^2 \lambda_1}}\log{w_0} +  \frac{\eps^2 \lambda_1\log\left(4 - 2\eps^2 \lambda_1\right)}{\sqrt{2 - \eps^2 \lambda_1}} + \bigoh\left(w_0^{-1}\right),\\
I_2 & = \frac{(1 - \eps^2 \lambda_1)}{ \sqrt{2 - \eps^2 \lambda_1}}w_0 + \frac{(1 - \eps^2 \lambda_1)^2}{ (2 - \eps^2 \lambda_1)^{3/2}} \log{w_0} \\
	& \qquad  +  \frac{(1 - \eps^2 \lambda_1)^2\log\left(4 - 2\eps^2 \lambda_1\right)-(1 - \eps^2 \lambda_1)^2}{ (2 - \eps^2 \lambda_1)^{3/2}} + \bigoh\left(w_0^{-1} \right)
 \end{aligned}
\end{equation}
as $w_0 \to \infty$. Therefore, from \eqref{eq:appint} and \eqref{eq:appint1} we obtain
\begin{equation}\nonumber
 I  = \frac{(1 - \eps^2 \lambda_1)}{ \sqrt{2 - \eps^2 \lambda_1}}w_0 +  \frac{1}{\left(2 - \eps^2 \lambda_1\right)^{3/2}}\log{w_0} +   \frac{\log\left(4 - 2\eps^2 \lambda_1\right) -(1 - \eps^2 \lambda_1)^2}{\left(2 - \eps^2 \lambda_1\right)^{3/2}}+ \bigoh\left(w_0^{-1} \right).
\end{equation}

\section*{}\label{app:nummethod}
To solve \eqref{eq:paramemsode} numerically, we use a shooting method. That is, we impose the initial conditions 
\be\label{eq:appbbcs}
  r(0)=0, \  r'(0)=\ell,  \ z'  (0)=0, \ u(0) = \alpha,
\ee
where $\alpha \in (-1,0)$ and find $(\lambda,\ell)$ such that $F(\lambda,\ell) \colonequals \begin{bmatrix}  r(1;\lambda,\ell) - 1 &  z(1;\lambda,\ell) \end{bmatrix}^T= \mathbf{0}.$ To do so, we apply Newton's method and iterate as 
\begin{equation}\nonumber
\mbox{{\boldmath $\lambda$}}_{n+1} = \mbox{{\boldmath $\lambda$}}_{n}  - 
 \begin{bmatrix}  \displaystyle \pd{r}{\lambda}(1;\lambda_n,\ell_n) &\displaystyle  \pd{r}{\ell}(1;\lambda_n,\ell_n) \vspace{5pt} \\  \displaystyle \pd{z}{\lambda}(1;\lambda_n,\ell_n) &\displaystyle  \pd{z}{\ell}(1;\lambda_n,\ell_n)  \end{bmatrix}^{-1} \begin{bmatrix}  r(1;\lambda_n,\ell_n) - 1 \\  (1;\lambda_n,\ell_n) \end{bmatrix}
\end{equation}
where $\mbox{{\boldmath $\lambda$}}_{n} = \begin{bmatrix}  \lambda_n & \ell_n \end{bmatrix}^T$. Therefore at each step we need to find $r_\lambda(1;\lambda_n,\ell_n),$  $r_\ell(1;\lambda_n,\ell_n),$  $z_\lambda(1;\lambda_n,\ell_n)$ and $z_\ell(1;\lambda_n,\ell_n)$. To this end, we differentiate the ode given in \eqref{eq:paramemsode} and the initial conditions \eqref{eq:appbbcs} with respect to $\lambda$ and separately with respect to $\ell$ to get two auxiliary problem for $(r_\lambda,z_\lambda)$ and $(r_\ell,z_\ell)$, whose solutions evaluated at $\xi=1$ yield our desired result.

\section*{}\label{app:secondordouterODE}
Here are the order $\bigoh(\eps^4)$ outer ODEs for \eqref{eq:paramemsode}:
\be
\begin{aligned}
 &r_2''  =  \frac{M_1}{s^{7/3}} + \frac{M_2}{ s^{5/3}},\\
 & r_2(1) = \ell_1 (\ell_1 - \lambda_0) - \ell_2, \\
 &  z_2'' + \frac{1}{s}z_2' + \frac{2\lambda_0}{s^2}z_2 = \frac{2 r_2}{3s^{7/3}} - \frac{2 r_2'}{9 s^{4/3}} - \frac{M_3}{s^{2}} + \frac{M_4}{s^{4/3}} - \frac{M_5}{s^{8/3}}\\
 & z_2(1) = \frac{\ell_1 (15 \ell_1 - 8)}{27} - \frac{2 \ell_2}{3} + \frac{2 A_1 \ell_1}{3} ( \sin{\phi_1}-\sqrt{2}\cos{\phi_1} )
 \end{aligned} 
\ee
where 
\begin{equation}\nonumber
 \begin{aligned}
  M_1 & \colonequals \frac{144 \sqrt{2} A_1 \cos\left(\frac{2 \sqrt{2}}{3}\log{s} + \phi_1 \right) + 144 A_1 \sin\left(\frac{2 \sqrt{2}}{3}\log{s}+\phi_1\right)+ 8}{243} \\
  M_2 & \colonequals \frac{8 - 12 \ell_1 - 72 A_1 \sin \phi_1}{243}\\
  M_3 & \colonequals \frac{32 (3-2 \ell_1 - 9 A_1 \sin{\phi_1})}{729 } \\ 
  M_4 & \colonequals \frac{227 + 48 \ell_1 - 36 \ell_1^2 - 36 A_1 (3 \ell_1-2 ) \sin\phi_1- 324 A_1^2 \sin^2\phi_1}{243 } \\
  M_5 & \colonequals \frac{4 \left(4 + 54 \sqrt{2}A_1 \cos\left(\frac{2\sqrt{2}}{3}\log{s}+\phi_1\right) + 180 A_1 \sin\left(\frac{2\sqrt{2}}{3}\log{s}+\phi_1\right)\right)}{729 }  \\
  	& \qquad - \frac{4  A_1^2 \sin^2\left(\frac{2\sqrt{2}}{3}\log{s}+\phi_1\right) }{3 }
 \end{aligned}
\end{equation}
and we have simplified the result using  \eqref{eq:outord1soln}, \eqref{eq:ord2routersoln}, \eqref{eq:outordereps2soln} and \eqref{eq:matchingCAphi}.

\bibliographystyle{siam}	
\bibliography{bibliography}		

\begin{thebibliography}{10}

\bibitem{brubaker2011analysis}
{\sc N.~D. Brubaker and J.~A. Pelesko}, {\em Analysis of a one-dimensional
  prescribed mean curvature equation arising in the study of mems}, preprint,
  (2011).

\bibitem{brubaker2011nonlinear}
\leavevmode\vrule height 2pt depth -1.6pt width 23pt, {\em Non-linear effects
  on canonical mems models}, European J. Appl. Math., 22 (2011), pp.~455--470.

\bibitem{burns2011steady}
{\sc M.~Burns and M.~Grinfeld}, {\em Steady state solutions of a bi-stable
  quasi-linear equation with saturating flux}, European J. Appl. Math., 22
  (2011), pp.~317--331.

\bibitem{esposito2010memsbook}
{\sc P.~Esposito, N.~Ghoussoub, and Y.~Guo}, {\em Mathematical Analysis of
  Partial Differential Equations Modeling Electrostatic MEMS}, vol.~20 of
  Courant Lecture Notes in Mathematics, American Mathematical Society,
  Providence, RI, 2010.

\bibitem{finn1986equilibrium}
{\sc R.~Finn}, {\em Equilibrium capillary surfaces}, vol.~284 of Grundlehren
  der mathematischen Wissenschaften, Springer-Verlag, New York, 1986.

\bibitem{wardguopan}
{\sc Y.~Guo, Z.~Pan, and M.~J. Ward}, {\em Touchdown and pull-in voltage
  behavior of a mems device with varying dielectric properties}, SIAM J. Appl.
  Math., 66 (2005), pp.~309--338.

\bibitem{guo02008infinitely}
{\sc Z.~Guo and J.~Wei}, {\em Infinitely many turning points for an elliptic
  problem with a singular non-linearity}, J. Lond. Math. Soc. (2), 78 (2008),
  pp.~21--35.

\bibitem{habets2004positive}
{\sc P.~Habets and P.~Omari}, {\em Positive solutions of an indefinite
  prescribed mean curvature problem on a general domain}, Adv. Nonlinear Stud.,
  4 (2004), pp.~1--13.

\bibitem{hinch1991perturbation}
{\sc E.~J. Hinch}, {\em Perturbation methods}, Cambridge Texts in Applied
  Mathematics, Cambridge University Press, Cambridge, 1991.

\bibitem{le2008subsupersolution}
{\sc V.~K. Le}, {\em On a sub-supersolution method for the prescribed mean
  curvature problem}, Czechoslovak Math. J., 58 (2008), pp.~541--560.

\bibitem{lindsay2011asymptotics2}
{\sc A.~E. Lindsay and M.~J. Ward}, {\em Asymptotics of some nonlinear
  eigenvalue problems modelling a mems capacitor. part ii: multiple solutions
  and singular asymptotics}, European J. Appl. Math., 22 (2011), pp.~83--123.

\bibitem{mellet2010existence}
{\sc A.~Mellet and J.~Vovelle}, {\em Existence and regularity of extremal
  solutions for a mean-curvature equation}, J. Differential Equations, 249
  (2010), pp.~37--75.

\bibitem{moulton2008theory}
{\sc D.~E. Moulton and J.~A. Pelesko}, {\em Theory and experiment for soap-film
  bridge in an electric field}, J. Colloid Interface Sci., 322 (2008),
  pp.~252--262.

\bibitem{Obersnel2010}
{\sc F.~Obersnel and P.~Omari}, {\em Positive solutions of the dirichlet
  problem for the prescribed mean curvature equation}, J. Differential
  Equations, 249 (2010), pp.~1674--1725.

\bibitem{pan2009one}
{\sc H.~Pan}, {\em One-dimensional prescribed mean curvature equation with
  exponential nonlinearity}, Nonlinear Anal., 70 (2009), pp.~999--1010.

\bibitem{pan2011radial}
{\sc H.~Pan and R.~Xing}, {\em Radial solutions for a prescribed mean curvature
  equation with exponential nonlinearity}, Nonlinear Anal.,  (2011), pp.~--.
\newblock (DOI: 10.1016/j.na.2011.08.010).

\bibitem{pan2011time1}
\leavevmode\vrule height 2pt depth -1.6pt width 23pt, {\em Time maps and exact
  multiplicity results for one-dimensional prescribed mean curvature
  equations.}, Nonlinear Anal., 74 (2011), pp.~1234--1260.

\bibitem{pan2011time2}
\leavevmode\vrule height 2pt depth -1.6pt width 23pt, {\em Time maps and exact
  multiplicity results for one-dimensional prescribed mean curvature equations.
  ii}, Nonlinear Anal., 74 (2011), pp.~3751---3768.

\bibitem{pelesko2003modeling}
{\sc J.~A. Pelesko and D.~H. Bernstein}, {\em Modeling Mems and Nems}, Chapman
  \& Hall/CRC, Boca Raton, FL, 2003.

\end{thebibliography}

\end{document}